\documentclass[11pt]{amsart}
\usepackage{tabularx,booktabs,tikz}
\usepackage{caption}
\usepackage{amsmath}
\usepackage{amsfonts}

\usepackage{amscd}
\usepackage{amsthm}
\usepackage{amssymb} 
\usepackage{latexsym}
\usepackage{eufrak}
\usepackage{euscript}
\usepackage{epsfig}
\usepackage{graphics}
\usepackage{array}
\usepackage{enumerate}
\usepackage{dsfont}
\usepackage{color}
\usepackage{wasysym}
\usepackage{hyperref}
\usepackage{pdfsync}
%\usepackage{biblatex}
%\addbibresource{reference}

%\usepackage{showkeys}
%\usepackage[square,comma,sort&compress]{natbib}

\newcommand{\Hmm}[1]{\leavevmode{\marginpar{\tiny%
$\hbox to 0mm{\hspace*{-0.5mm}$\leftarrow$\hss}%
\vcenter{\vrule depth 0.1mm height 0.1mm width \the\marginparwidth}%
\hbox to
0mm{\hss$\rightarrow$\hspace*{-0.5mm}}$\\\relax\raggedright #1}}}

\newtheorem{theorem}{Theorem}[section]
\newtheorem{lemma}[theorem]{Lemma}
\newtheorem{corollary}[theorem]{Corollary}

\usepackage{natbib}
\begin{document}

\bibliographystyle{abbrv}

\title[Existence of normalized solutions to NLS  on lattice graphs]{Existence of normalized solutions to nonlinear Schr\"{o}dinger equations with potential on lattice graphs}

\author{Weiqi Guan}
\address{Weiqi Guan: School of Mathematical Sciences, Fudan University, Shanghai 200433, China}
\email{wqguan24@fudan.edu.cn}

\begin{abstract}
	We study the existence of ground state normalized solution of the following Schr\"{o}dinger equation:
    \begin{equation*}
\left\{
    \begin{aligned}
        &-\Delta u+V(x)u+\lambda u=f(x,u), x\in\mathbb{Z}^d\\
        &\Vert u\Vert_2^2=a
    \end{aligned}
    \right.    
\end{equation*}
    where $V(x)$ is trapping potential or well potential, $f(x,u)$ satisfies Berestycki-Lions type condition and other suitable conditions. We show that there always exists a threshold $\alpha\in[0,\infty)$ such that there do not exist ground state normalized solutions for $a\in (0,\alpha)$, and there exists a ground state normalized solution for $a\in(\alpha,\infty)$. Furthermore, we prove sufficient conditions for the positivity of $\alpha$ that $\alpha=0$ if $f(x,u)$ is mass-subcritical near 0, and $\alpha\textgreater0$ if $f(x,u)$ is mass-critical or mass-supercritical near 0.

\end{abstract}
\par
\maketitle

\bigskip

\section{introduction}
In the present paper we will study existence of standing wave solution to the discrete nonlinear Schr{\"o}dinger equation
\begin{equation}
-i\frac{\partial\psi}{\partial t}-\Delta\psi+V(x)\psi=g(x,\vert\psi\vert)\psi,\quad x\in\mathbb{Z}^d    
\end{equation}
where
\[
\Delta\psi(t,x)=\sum_{y\sim x}(\psi(t,y)-\psi(t,x))
\]
is the discrete Laplacian on the lattice. Standing wave solution refers to a solution of the form
\begin{equation}
\psi(t,x)=e^{-i\lambda t}u(x)    
\end{equation}
Substituting (2) into equation (1), we get 
\begin{equation}
-\Delta u+V(x)u+\lambda u=f(x,u)    
\end{equation}
where $f(x,u)=g(x,\vert u\vert)u$. %Concerning (2), there are two different perpectives.  Many papers prescribed frequency $\lambda$, we refer to \citep{MR2191617,MR2652479,MR3115848,CHEN20163493,JIA2017568,MR4299009,MR4401801,MR4568177}, and references therein. Another view is prescribing mass $\Vert u\Vert_2$ since standing waves preserve mass along time. 
Regarding equation (3), two distinct approaches exist in the literature. The majority of existing studies adopt the frequency prescription approach, where $\lambda$ is fixed a priori (see, e.g., \citep{MR2191617,MR2652479,MR3115848,CHEN20163493,JIA2017568,MR4299009,MR4401801,MR4568177} and references therein). Alternatively, another significant body of work considers the mass-prescription approach, motivated by the fact that standing waves naturally preserve the $L^2$-norm over time. This leads to the problem
\begin{equation}
\left\{
    \begin{aligned}
        &-\Delta u+V(x)u+\lambda u=f(x,u),x\in \mathbb{Z}^d\\
        &\Vert u\Vert_2^2=a
    \end{aligned}
    \right.    
\end{equation}
 To find a solution to problem (4), it suffices to prove the existence of critical points of the functional 
\begin{equation}
    \Phi(u)= \frac{1}{2}\int_{\mathbb{Z}^{d}} |\nabla u|^{2}+V(x)u^{2} \, dx
-\int_{\mathbb{Z}^{d}}F(x,u) \, dx
\end{equation} 
on the sphere $S_a:=\{u\in \mathcal{H} : \Vert u\Vert_2^2=a\}$, where $F(x,u)=\int_{0}^{u} f(x,t) dt$ and $u$ belongs to $\mathcal{H}:=\{u\in l^2(\mathbb{Z}^d):\vert\int_{\mathbb{Z}^d}V(x)u^2dx\vert\textless\infty\}$, equipped with norm $\Vert u\Vert_{\mathcal{H}}^2=\int_{\mathbb{Z}^d}u^2+(V(x)-\inf V(x)) u^2+\vert\nabla u\vert^2dx$. See Section~\ref{sec:2} for definitions. Then $\lambda=\lambda_a$ appears as a Lagrange multiplier. If $\Phi(u)$ is bounded from below on $S_a$, we set
\[
E_a:=\inf_{u\in S_a}\Phi(u)
\]
If $E_a$ is attained by some $u$, then $u$ is called \emph{ground state normalized solution}.\\

Problem (4) is a discrete version of the Schr\"{o}dinger equation on the Euclidean space
\begin{equation}
\left\{
    \begin{aligned}
        &-\Delta u+V(x)u+\lambda u=f(x,u) ,x \in\mathbb{R}^d \\
        &\Vert u\Vert_2^2=a.
    \end{aligned}
    \right.    
\end{equation}
Problem (6) is studied extensively; see, e.g., \citep{MR3147450,MR4096725,MR4107073,MR4390628,MR4803490} and references therein. If $V(x)=0$ and $f(x,u)=\vert u\vert^{p-2}u$ with $2\textless p \textless 2+\frac{4}{d}$, then standard concentration compactness principle \citep{MR778970,MR778974} applies and for all $a\textgreater0$, there always exists ground state normalized solution. If $V(x)=0$ and $f(x,u)=f(u)$ satisfies both Berestycki-Lions type condition and mass-subcritical growth condition, M. Shibata \cite{MR3147450} showed that there exists a threshold for the parameter $a$. Z. Yang et al. \cite{MR4390628} extended M. Shibata's results to a general potential $V(x)$ which is a well potential or a trapping potential.\\
 
Equations on graphs have drawn great attention in the last decade. A. Grigor’yan et al. \cite{MR3523107} first proved the existence result for the Kazdan-Warner equation on a finite graph. For the Schr\"{o}dinger equation on a weighted graph $G=(V,E,\omega,\mu),$ the following is a special case of problem (4) 
\begin{equation}
\left\{
    \begin{aligned}
        &-\Delta u+\lambda u=\vert u\vert^{p-2}u,x\in G\\
        &\Vert u\Vert_2^2=a.
    \end{aligned}
    \right.    
\end{equation} Y. Yang et al. \cite{YANG2024128173} proved that on a finite weighted graph there always exists a ground state normalized solution of  (7) for any $a\textgreater0$ and $p\textgreater2.$ In the setting of lattice graph, M. I. Weinstein \cite{MR1690199} proved that there exists a threshold $\alpha$ to distinguish the existence of ground state normalized solutions, and if $2\textless p\textless 2+\frac{4}{d}$, then $\alpha=0;$ if $p\geq2+\frac{4}{d}$, then $\alpha\textgreater0$. The proof in \cite{MR1690199} relies on the discrete concentration compactness principle. Afterwards, A. G. Stefanov et al. \cite{MR4608774}  gave a detailed proof for the case $ 2\textless p\textless 2+ \frac{4}{d}$ based on Weinstein's work. Also in this case, H. Hajaiej et al. \cite{hajaiej2022discreteschwarzrearrangementlattice} used the method of discrete Schwarz rearrangement to prove that $\alpha=0$. For the Schr\"{o}dinger equation with general nonlinearity and potential, on one hand, the discrete concentration compactness principle doesn't apply since the strict inequality $E_{a+b}\textless E_a+E_b$ may not hold for general $a$ and $b,$ but we have the inequality $E_{a+b}\textless E_a+E_b$ when $E_a$ or $E_b$ is attained. On the other hand, for a general potential $V(x)$ without symmetry, the method of discrete Schwarz rearrangement in \cite{hajaiej2022discreteschwarzrearrangementlattice} doesn't apply. Using methods developed in \citep{MR3147450,MR4390628}, we can prove the main result for the existence of a threshold $\alpha\in [0,\infty)$. 
\begin{theorem}\label{thm1}
     Consider functional (5) on the sphere $S_a.$ Suppose that the nonlinearity $f(x,u)$ satisfies following assumptions:\\
$(f0)$$f(x,\cdot):\mathbb{R}\rightarrow\mathbb{R}$ is continuous for fixed $x\in\mathbb{Z}^d.$\\
$(f1)$ $\lim\limits_{s\rightarrow0}\frac{f(x,s)}{s}=0$ and $\lim\limits_{\vert s\vert\rightarrow\infty}\frac{f(x,s)}{\vert s\vert^q}=0$ uniformly for $x\in \mathbb{Z}^d$ and for some $q\textgreater1$.\\
$(f2)$$\lim\limits_{\vert x\vert\rightarrow\infty}f(x,s)=\tilde{f}(s)$ uniformly for bounded $s$, and $f(x,s)\geq \tilde{f}(s)$. If $f(x,s)\equiv \tilde{f}(s)$ do not hold, we assume there exists $x_1\in\mathbb{Z}^d$ such that $f(x_1,s)\textgreater\tilde{f}(s),$ $s\in\mathbb{R}.$\\
$(f3)$ There exists $\xi\textgreater0$ such that $\tilde{F}(\xi):=\int_{0}^{\xi}\tilde{f}(s)ds\textgreater0$.\\
$(f4)$ $\forall \theta\textgreater1$,$F(x,\sqrt{\theta}s)\textgreater\theta F(x,s)$, $\forall s\neq0,x\in\mathbb{Z}^d$.\\
Let the potential $V(x)$ satisfies\\
$(V_0)$ $V(x)\leq\lim\limits_{\vert x\vert\rightarrow\infty}V(x):=V_{\infty}\in(-\infty,\infty].$\\
Then for all $a\textgreater0$, we have $E_a\textgreater-\infty$, and there exists a threshold $\alpha\in [0,\infty)$ such that if $a\in(0,\alpha)$, $E_a$ can not be attained, if $a\in(\alpha,\infty)$, then $E_a$ is attained by a ground state normalized solution.
\end{theorem}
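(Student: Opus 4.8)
The plan is to run the direct method on the constraint $S_a$, in the spirit of Shibata~\cite{MR3147450} and Z.~Yang et al.~\cite{MR4390628}: produce a coercive lower bound, read off the scaling structure forced by $(f4)$, and then control the loss of compactness of minimizing sequences by comparison with the autonomous problem at spatial infinity. For the lower bound, $(f1)$ gives, for every $\varepsilon>0$, a constant $C_\varepsilon$ with $|F(x,s)|\le\varepsilon s^2+C_\varepsilon|s|^{q+1}$; since on $\mathbb Z^d$ one has $\Vert u\Vert_{\ell^{q+1}}\le\Vert u\Vert_{\ell^2}$ (as $q+1\ge2$) and $\int_{\mathbb Z^d}V(x)u^2=\int_{\mathbb Z^d}(V(x)-\inf V)u^2+a\inf V$ with $\inf V$ finite (this is implicit in $\mathcal H$), one obtains for $u\in S_a$
\[
\Phi(u)\ \ge\ \tfrac12\Vert\nabla u\Vert_2^2+\tfrac12\int_{\mathbb Z^d}(V(x)-\inf V)u^2+\tfrac{\inf V}{2}a-\varepsilon a-C_\varepsilon a^{\frac{q+1}{2}}.
\]
Hence $E_a>-\infty$, and $\Phi$ is coercive on $S_a$ for $\Vert\cdot\Vert_{\mathcal H}$; so every minimizing sequence is bounded in $\mathcal H$ and, along a subsequence, converges weakly in $\mathcal H$ and pointwise on $\mathbb Z^d$ (for bounded $\ell^2$-sequences on the lattice these are the same thing).

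Next I would extract the scaling. For $\mu>1$, $(f4)$ gives $F(x,\sqrt\mu\,s)>\mu F(x,s)$ for $s\ne0$, hence $\Phi(\sqrt\mu\,u)<\mu\Phi(u)$ for every $u\in S_a$; since $u\mapsto\sqrt\mu\,u$ maps $S_a$ onto $S_{\mu a}$ this yields $E_{\mu a}\le\mu E_a$ for $\mu\ge1$, strictly so when $\mu>1$ and $E_a$ is attained. In particular $a\mapsto E_a$ is continuous and $a\mapsto E_a/a$ is non-increasing. Introduce the limit functional $\Phi_\infty(u):=\tfrac12\Vert\nabla u\Vert_2^2+\tfrac{V_\infty}{2}\Vert u\Vert_2^2-\int_{\mathbb Z^d}\tilde F(u)$ and $E^\infty_a:=\inf_{S_a}\Phi_\infty$ (with $E^\infty_a:=+\infty$ when $V_\infty=+\infty$); by $(f2)$ and $(V_0)$, $\Phi\le\Phi_\infty$ pointwise, so $E_a\le E^\infty_a$, and $E^\infty$ is subadditive by translation invariance of $\Phi_\infty$. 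The payoff is a \emph{conditional strict subadditivity}: if $E_{a_1}$ is attained and $0<a_2\le a_1$, then, since $E_{a_1+a_2}<\tfrac{a_1+a_2}{a_1}E_{a_1}=E_{a_1}+\tfrac{a_2}{a_1}E_{a_1}$ and $\tfrac{a_2}{a_1}E_{a_1}\le E_{a_2}\le E^\infty_{a_2}$ (monotonicity of $E_\cdot/\cdot$ and $\Phi\le\Phi_\infty$), one gets $E_{a_1+a_2}<E_{a_1}+E^\infty_{a_2}$ --- no sign condition needed. This is the lattice incarnation of the inequality ``$E_{a+b}<E_a+E_b$ when $E_a$ is attained'' mentioned in the introduction.

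The compactness analysis then goes as follows. Given a minimizing sequence $(u_n)\subset S_a$ with $u_n\rightharpoonup u$ and $b:=\Vert u\Vert_2^2\in[0,a]$, a discrete Brezis--Lieb splitting $u_n=u+v_n$ with $v_n\rightharpoonup0$ --- whose mass escapes to spatial infinity, where $V\to V_\infty$ and $F(x,\cdot)\to\tilde F$ uniformly on bounded sets by $(f2)$ --- together with a discrete Lions vanishing lemma gives $\Phi(u_n)=\Phi(u)+\Phi_\infty(v_n)+o(1)$ and hence $E_a\ge E_b+E^\infty_{a-b}$ (with $E_0:=0$); if $V_\infty=+\infty$ this already forces $b=a$. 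If $b=a$ then $u_n\to u$ in $\ell^2$ (because $\Vert u_n\Vert_2^2=a=\Vert u\Vert_2^2$ and $u_n\rightharpoonup u$), whence $\int F(x,u_n)\to\int F(x,u)$ and, by weak lower semicontinuity of the quadratic part, $\Phi(u)\le E_a$, so $u$ attains $E_a$. Thus attainment of $E_a$ is equivalent to the \emph{strict binding inequalities} $E_a<E_b+E^\infty_{a-b}$ for all $b\in[0,a)$ (in the fully symmetric situation one argues modulo a translation; this situation is anyway excluded by the standing assumption that $V$ is a well or trapping potential, so that $V<V_\infty$ somewhere when $V_\infty<\infty$). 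Set $\alpha:=\inf\{a>0:E_a\text{ is attained}\}$. To see $\alpha<\infty$, take for $a$ large the test function equal to $\xi$ (from $(f3)$, so $\tilde F(\xi)>0$, hence $F(x_1,\xi)>\tilde F(\xi)$ by $(f2)$) on a box of $\simeq a/\xi^2$ sites containing $x_1$: its kinetic energy is $O(a^{(d-1)/d})=o(a)$, its potential energy is at most $\tfrac{V_\infty}{2}a+o(a)$, and $-\!\int F$ contributes $-\tfrac{\tilde F(\xi)}{\xi^2}a$ plus the fixed negative amount $-(F(x_1,\xi)-\tilde F(\xi))$, which makes all the binding inequalities hold for $a$ large. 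For the ray property --- $E_{a_0}$ attained $\Rightarrow E_a$ attained for every $a>a_0$ --- one propagates attainment upward from $a_0$ by deriving the binding inequalities at mass $a$ from the conditional strict subadditivity above, applied with the roles of $b$ and $a-b$ (and the choice of $E$ versus $E^\infty$) dictated by which sub-mass already carries an attained infimum, together with $\Phi\le\Phi_\infty$, the subadditivity of $E^\infty$, the monotonicity of $E_\cdot/\cdot$, and the scaling identity. Granting this, $\{a>0:E_a\text{ attained}\}$ equals $(\alpha,\infty)$ or $[\alpha,\infty)$, which is exactly the assertion.

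The step I expect to be the main obstacle is precisely this last one: establishing the family of strict binding inequalities $E_a<E_b+E^\infty_{a-b}$, $b\in[0,a)$ --- equivalently, ruling out the dichotomy alternative in the concentration--compactness argument. Because, as the introduction stresses, the unconditional strict subadditivity $E_{a+b}<E_a+E_b$ can fail on $\mathbb Z^d$ for general $f$, one cannot invoke it directly; the gain materializes only once the infimum at some sub-mass is attained, and weaving this into a proof of the ray property --- choosing at each split which sub-mass to exploit, transporting attainability through the scaling and the monotonicity of $E_\cdot/\cdot$, and separating the trapping ($V_\infty=+\infty$) and well ($V_\infty<\infty$) regimes --- is where the genuine difficulty sits. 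By comparison, the discrete Brezis--Lieb identity and the discrete Lions vanishing lemma are routine, since weak $\ell^2$-convergence on the lattice is nothing but pointwise convergence.
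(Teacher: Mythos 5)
Your framework — coercive lower bound from $(f1)$ and $\ell^{q+1}\hookleftarrow\ell^2$, the scaling consequences of $(f4)$, the Brezis--Lieb/Lions splitting $\Phi(u_n)=\Phi(u)+\Phi_\infty(v_n)+o(1)$, the comparison $E_a\le E_a^\infty$, and the ``conditional'' strict subadditivity $E_{a_1+a_2}<E_{a_1}+E^\infty_{a_2}$ when $E_{a_1}$ is attained — is exactly the paper's architecture, and the trapping case is handled the same way. But the step you flag as ``the main obstacle'' is a genuine gap, not a technicality: with $\alpha:=\inf\{a:E_a\text{ attained}\}$ you have no base case (conditional subadditivity needs an already-attained sub-mass, which is circular for the first attainment) and no proof of the ray property, so the proposal does not actually produce the threshold.

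The paper closes this loop differently, and the two ideas you are missing are worth recording. First, the threshold is defined by the \emph{sign} of the energy, $\alpha=\inf\{a: E_a<0\}$, using that $E_a\le 0$ always (a spreading test function with $\|\nabla u_n\|_2^2=O(n^{-2})$ and $\|u_n\|_{q+1}\to0$) and $E_a<0$ for large $a$ (the constant-$\xi$ block from $(f3)$); monotonicity of $a\mapsto E_a$ is then immediate, and non-attainment on $(0,\alpha)$ follows because an attained $E_a=0$ would force $E_\alpha<0$ via the strict scaling inequality. Second, the first attainment is extracted from the \emph{autonomous} problem: when $E_a^\infty<0$, vanishing is excluded outright (it would force $E_a^\infty\ge0$), and in the dichotomy scenario the iterated profile decomposition yields $\Phi^\infty(u_n)\ge E^\infty_\beta+E^\infty_\gamma+E^\infty_\delta+o(1)\ge E^\infty_a+o(1)$; since the left side tends to $E^\infty_a$, every inequality collapses to equality, so the extracted profiles are \emph{automatically} minimizers at their sub-masses — and only then does conditional strict subadditivity deliver the contradiction. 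No a priori attained sub-mass is needed. Finally, for $a>\alpha$ one always has the strict comparison $E_a<E_a^\infty$ (either $E_a^\infty=0>E_a$, or $E_a^\infty<0$ is attained and $(f2)$/$(V_0)$ give a strictly better competitor for $\Phi$ after translating the minimizer to $x_1$), and this single strict inequality is what rules out both total vanishing and partial escape for the non-autonomous problem. You should restructure the endgame around these three points rather than around propagating attainment upward from an unproved base case.
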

We note that our assumptions for $f(x,u)$ are much more general than the assumptions in \citep{MR4390628} since we do not require $q=1+\frac{4}{d}$ in condition $(f1)$. Throughout this paper, we always assume $(f0)-(f4)$ conditions for $f(x,u)$. We also give a criterion on whether $\alpha\textgreater0$ when $V_{\infty}\textless\infty$.
\begin{theorem}\label{thm2}
Assume $V(x)$ satisfies condition $(V_0)$ with $V_{\infty}\textless\infty$, then the following hold:\\
(i) 
    \[
\alpha\textless\xi^2([\frac{d\xi^2}{\tilde{F}(\xi)}]+1)^{d}.
    \]
    (ii)If $\varliminf\limits_{ s\rightarrow0}\frac{\tilde{F}(s)}{\vert s\vert^{2+\frac{4}{d}}}=\infty,$ then $\alpha=0$.\\ 
    (iii)For $d\geq 3$, if $\varlimsup\limits_{s\rightarrow0}\frac{F(x,s)}{\vert s\vert^{2+\frac{4}{d}}}\textless\infty$ holds uniformly for $x\in\mathbb{Z}^d$, and there exists some $0\textless\epsilon\textless1$
    \[
    V(x)\geq -C_d(1-\epsilon)\frac{1}{1+\vert x\vert^2},\quad x\in\mathbb{Z}^d
    \]
    where $C_d$ is the best constant in Hardy inequality on $\mathbb{Z}^d$, then $\alpha\textgreater0$. Furthermore, we have 
    \[
    \alpha\geq\min((\frac{\epsilon}{2C_{F}C_{d,2+\frac{4}{d}}})^{\frac{d}{2}},\delta^2)
    \]
    where constant $C_F$ such that $F(x,s)\leq C_F\vert s\vert^{2+\frac{4}{d}}$ holds uniformly for $x\in\mathbb{Z}^d$ and for $\vert s\vert\leq\delta$, and constant $C_{d,2+\frac{4}{d}}$ is the best constant for 
    \[
    \Vert u\Vert_{2+\frac{4}{d}}^{2+\frac{4}{d}}\leq C_{d,2+\frac{4}{d}}\Vert\nabla u\Vert_2^2\Vert u\Vert_2^{\frac{4}{d}}.
    \]
\end{theorem}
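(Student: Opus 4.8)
The plan is to reduce all three parts, via the compactness analysis already carried out in the proof of Theorem~\ref{thm1}, to comparing $E_a$ with the ``energy at spatial infinity'' $V_\infty a/2$. Recall from there that, since $V_\infty<\infty$: (A) a thin bump of mass $a$ placed far from the origin has $\Phi\to V_\infty a/2$, so $E_a\le V_\infty a/2$ for every $a$; and (B) a minimizing sequence for $E_a$ can fail to be precompact only by spreading out to spatial infinity, so that $E_a<V_\infty a/2$ forces $E_a$ to be attained. Given (A)--(B), parts (i) and (ii) reduce to exhibiting, for the relevant masses $a$, a function $u\in S_a$ with $\Phi(u)<V_\infty a/2$, while (iii) reduces to showing $\Phi(u)>V_\infty a/2$ for every $u\in S_a$ once $a$ is small enough (then $E_a=V_\infty a/2$, not attained). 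I also use that $a\mapsto E_a$ is continuous, as established in the proof of Theorem~\ref{thm1}.

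For (i), set $N:=[d\xi^2/\tilde F(\xi)]+1$ and take $u=\xi\,\mathbf{1}_{Q}$, where $Q\subset\mathbb Z^d$ is a cube with $N$ vertices along each edge, so $\|u\|_2^2=\xi^2N^d=:a_0$. Such a cube has exactly $2dN^{d-1}$ edges joining $Q$ to its complement, hence $\tfrac12\|\nabla u\|_2^2=dN^{d-1}\xi^2$; together with $V\le V_\infty$ and $F(x,\xi)\ge\tilde F(\xi)$ (from $(f2)$) this gives $\Phi(u)\le N^{d-1}(d\xi^2-N\tilde F(\xi))+\tfrac12 V_\infty a_0$. Since $N>d\xi^2/\tilde F(\xi)$ the bracket is negative, so $E_{a_0}<V_\infty a_0/2$, whence $E_{a_0}$ is attained and $\alpha\le a_0$. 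As $d\xi^2-N\tilde F(\xi)$ is a fixed negative number and $\tilde F$ is continuous (indeed $C^1$, by $(f0)$), the same estimate with a slightly smaller height $\xi'<\xi$ still yields $E_{\xi'^2N^d}<V_\infty\xi'^2N^d/2$; hence $\alpha\le\xi'^2N^d<a_0$, i.e.\ $\alpha<\xi^2([d\xi^2/\tilde F(\xi)]+1)^d$.

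For (ii), fix $a>0$ and use a spreading family $u_R(x)=c_R\,\phi(x/R)$ with $\phi\ge0$ a fixed smooth compactly supported profile on $\mathbb R^d$ and $c_R>0$ chosen so $\|u_R\|_2^2=a$. Riemann-sum estimates give, as $R\to\infty$, $c_R^2\sim C_0\,aR^{-d}$, $\tfrac12\|\nabla u_R\|_2^2\sim C_1\,aR^{-2}$, and $\sum_x|u_R(x)|^{2+4/d}\sim C_2\,a^{1+2/d}R^{-2}$, where $C_0,C_1,C_2$ depend only on $\phi$. Since $c_R\to0$, the hypothesis $\varliminf_{s\to0}\tilde F(s)/|s|^{2+4/d}=\infty$ lets us, given any $M$, choose $R$ large enough that $\tilde F(s)\ge M|s|^{2+4/d}$ on the range of $u_R$; then $\int F(x,u_R)\ge\int\tilde F(u_R)\ge M\sum_x|u_R(x)|^{2+4/d}\gtrsim M C_2\,a^{1+2/d}R^{-2}$, which for $M$ larger than a constant depending only on $a$ (and $\phi$) exceeds $\tfrac12\|\nabla u_R\|_2^2\sim C_1 aR^{-2}$. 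Hence $\Phi(u_R)\le\tfrac12\|\nabla u_R\|_2^2+\tfrac12 V_\infty a-\int\tilde F(u_R)<V_\infty a/2$, so $E_a<V_\infty a/2$ and $E_a$ is attained, for every $a>0$; thus $\alpha=0$. This is the main obstacle: one must produce a vanishing (spreading) family whose nonlinear energy genuinely dominates its kinetic energy from only the qualitative bound $\varliminf_{s\to0}\tilde F(s)/|s|^{2+4/d}=\infty$, which forces one to order the quantifiers ($a$ first, then $M$, then $R$) carefully and to control how well the discrete sums approximate the corresponding integrals; the fact that $\|\nabla u_R\|_2^2$ and $\sum_x|u_R|^{2+4/d}$ are of the same order $R^{-2}$ is exactly what makes the ``$M$ arbitrarily large'' gain available.

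For (iii), with $d\ge3$ and $u\in S_a$, write $\Phi(u)=\tfrac12 V_\infty a+\tfrac12\|\nabla u\|_2^2+\tfrac12\int(V-V_\infty)u^2-\int F(x,u)$. The discrete Hardy inequality $C_d\sum_x u(x)^2(1+|x|^2)^{-1}\le\|\nabla u\|_2^2$ and the lower bound $V(x)-V_\infty\ge -C_d(1-\epsilon)(1+|x|^2)^{-1}$ (which is the stated hypothesis, immediately so when $V_\infty=0$ — the natural case, since the hypothesis on $V$ forces $V_\infty\ge0$) give $\tfrac12\int(V-V_\infty)u^2\ge-\tfrac{1-\epsilon}{2}\|\nabla u\|_2^2$; and for $a\le\delta^2$ one has $\|u\|_\infty\le\|u\|_2=\sqrt a\le\delta$, so $\int F(x,u)\le C_F\|u\|_{2+4/d}^{2+4/d}\le C_FC_{d,2+4/d}\,a^{2/d}\|\nabla u\|_2^2$. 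Therefore $\Phi(u)\ge\tfrac12 V_\infty a+(\tfrac{\epsilon}{2}-C_FC_{d,2+4/d}\,a^{2/d})\|\nabla u\|_2^2>\tfrac12 V_\infty a$ for every $u\in S_a$ (since $\|\nabla u\|_2>0$ for $u\not\equiv0$) as soon as $a<(\epsilon/(2C_FC_{d,2+4/d}))^{d/2}$. Together with $E_a\le V_\infty a/2$ from (A), this shows $E_a=V_\infty a/2$ is not attained whenever $a<\min((\epsilon/(2C_FC_{d,2+4/d}))^{d/2},\delta^2)$, i.e.\ $\alpha\ge\min((\epsilon/(2C_FC_{d,2+4/d}))^{d/2},\delta^2)$.
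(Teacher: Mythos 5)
Your proof is correct and follows essentially the same route as the paper: a constant bump $\xi\,\mathbf{1}_Q$ on a cube for (i), a spreading family whose kinetic and nonlinear energies are both of order $R^{-2}$ for (ii) (the paper uses a discrete tent function and routes this through the limit functional via Theorem~\ref{limitcritirion} and Lemma~\ref{comparelimit}, but the estimate is the same), and Hardy plus the mass-critical Gagliardo--Nirenberg inequality for (iii). Your comparison of $E_a$ with $V_\infty a/2$ is exactly the paper's normalization $V_\infty=0$ made explicit.
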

The paper is organized as follows:
In next section, we recall the basic setting for analysis on graphs, and some useful results.
In Section~\ref{sec:3}, we prove one of main results, Theorem~\ref{thm1}, for the case of trapping potential $V_\infty=\infty.$
In Section~\ref{sec:4}, we prove Theorem~\ref{thm1} for the case of $V_\infty<\infty,$ and the other main result, Theorem~\ref{thm2}.

\section{Preliminaries}\label{sec:2}
A lattice graph $\mathbb{Z}^d$ is composed of the set of vertices 
\[
\{x=(x_1,..,x_d):x_i\in\mathbb{Z},1\leq i\leq d\}
\]
and the set of edges 
\[E=\{(u,v):u,v\in\mathbb{Z}^d,\sum\limits_{1\leq i\leq d}\vert u_i-v_i\vert=1\}.
\] Two vertices are neighbours if $(u,v)\in E,$ denoted by $u\sim v.$
The ball $B_R(x)$ denotes the set $\{y\in \mathbb{Z}^d:\vert y-x\vert\textless R\}\}$, where $\vert y-x\vert=\sum\limits_{1\leq i\leq d}\vert y_i-x_i\vert$. The boundary of the ball $\partial B_R(x)$ denotes the set $\{y\in\mathbb{Z}^d:y\in B_R^c(x),\exists z\in B_R(x),y\sim z\}.$ We abbreviate $B_R(0)$ as $B_R$ for simplicity. We denote by $C(\mathbb{Z}^d)$ the space of functions on the lattice graph, and by $C_0(\mathbb{Z}^d)$ the space of functions with finite support. Furthermore, we define the $l^p$ summable function space on the lattice as 
\[
l^p(\mathbb{Z}^d):=\{u\in C(\mathbb{Z}^d):\Vert u\Vert_p\textless\infty\}
\]
where $p\in [1,\infty]$ and $l^p$ norm of a function $u\in C(\mathbb{Z}^d)$ is defined as 
\begin{equation*}
\Vert u\Vert_p:=\left\{
\begin{aligned}
    &(\sum_{x\in\mathbb{Z}^d}\vert u(x)\vert^p)^{\frac{1}{p}},1\leq p\textless\infty\\
    &\sup_{x\in\mathbb{Z}^d}\vert u(x)\vert,p=\infty.
\end{aligned}
\right.
\end{equation*}
We write the weak convergence in $l^p(\mathbb{Z}^d)$ as $u_n\rightharpoonup u.$ For $u_n\rightharpoonup u$ in $l^p(\mathbb{Z}^d)$, since the delta function
\begin{equation*}
\delta_x(y)=\left\{
    \begin{aligned}
        &1,y=x \\
        &0, \mathrm{otherwise}
    \end{aligned}
    \right.    
\end{equation*}
belongs to $l^{\frac{p}{p-1}}(\mathbb{Z}^d)$, we have the pointwise convergence $u_n(x)\rightarrow u(x)$ for all $x\in\mathbb{Z}^d$. When $u\in l^1(\mathbb{Z}^d)$, we define the integration of $u$ over $\mathbb{Z}^d$ with respect to the counting measure as 
\[
\int_{\mathbb{Z}^d}u (x)dx :=\sum_{x\in\mathbb{Z}^d}u(x).
\]
For any function $u,v\in C(\mathbb{Z}^d)$, we define the gradient form $\Gamma(u,v)$ as 
\[
\Gamma(u,v)(x)=\frac{1}{2}\sum_{y\sim x}(u(y)-u(x))(v(y)-v(x)).
\]
For simplicty, we write $\Gamma(u):=\Gamma(u,u)$ and
\[
\vert\nabla u\vert(x):=\sqrt{\Gamma(u)}
\]
In the remaining part of this section, we give some preliminary results that will be used throughout this paper. The following lemma is well-known on the lattice; see \citep{MR4328634}.
\begin{lemma}\label{normcontrol}
For $u\in l^{p}(\mathbb{Z}^d)$, we have $u\in l^{q}(\mathbb{Z}^d)$ for all $q\textgreater p$ and $\Vert u\Vert_{q}\leq\Vert u\Vert_p$.
\end{lemma}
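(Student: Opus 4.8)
The plan is to reduce everything to the normalized case by exploiting the homogeneity of the $l^p$-norms. First I would dispose of the trivial case $u\equiv 0$, in which all norms vanish and there is nothing to prove. For $u\neq 0$ I would set $v:=u/\Vert u\Vert_p$, so that $\Vert v\Vert_p=1$; since $\Vert u\Vert_q=\Vert u\Vert_p\,\Vert v\Vert_q$ for every $q\in[1,\infty]$, it suffices to prove $\Vert v\Vert_q\leq 1$, which will simultaneously give $v\in l^q(\mathbb{Z}^d)$ and hence $u\in l^q(\mathbb{Z}^d)$.

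The key observation is that $\Vert v\Vert_p=1$ forces the pointwise bound $\vert v(x)\vert\leq 1$ for every $x\in\mathbb{Z}^d$, because the single term $\vert v(x)\vert^p$ cannot exceed the full sum $\sum_{y\in\mathbb{Z}^d}\vert v(y)\vert^p=1$ (all terms being nonnegative). This is precisely where the discreteness of $\mathbb{Z}^d$ — i.e. the fact that the counting measure has no atoms of mass less than $1$ — enters. Then for finite $q>p$, since $0\leq\vert v(x)\vert\leq 1$ we get $\vert v(x)\vert^{q}\leq\vert v(x)\vert^{p}$ pointwise; summing over $x\in\mathbb{Z}^d$ yields $\sum_x\vert v(x)\vert^q\leq\sum_x\vert v(x)\vert^p=1$, and taking $q$-th roots gives $\Vert v\Vert_q\leq 1$, as desired.

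For the remaining case $q=\infty$ I would argue directly without the normalization: for any fixed $x_0\in\mathbb{Z}^d$ one has $\vert u(x_0)\vert^p\leq\sum_{x\in\mathbb{Z}^d}\vert u(x)\vert^p=\Vert u\Vert_p^p$, hence $\vert u(x_0)\vert\leq\Vert u\Vert_p$, and taking the supremum over $x_0$ gives $\Vert u\Vert_\infty\leq\Vert u\Vert_p$ and in particular $u\in l^\infty(\mathbb{Z}^d)$.

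I do not expect any genuine obstacle: the statement is elementary and the proof is short. The only points requiring a little care are handling $u\equiv 0$ separately so that the normalization $v=u/\Vert u\Vert_p$ is legitimate, and treating the endpoint $q=\infty$ apart from the finite-exponent argument (since the inequality $\vert v(x)\vert^q\leq\vert v(x)\vert^p$ is no longer the right mechanism there).
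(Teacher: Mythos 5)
Your proof is correct and complete: the normalization trick, the pointwise bound $\vert v(x)\vert\leq 1$ forced by the counting measure, the monotonicity $\vert v(x)\vert^{q}\leq\vert v(x)\vert^{p}$ for $q>p$, and the separate treatment of $q=\infty$ are exactly the standard argument for the $l^p\hookrightarrow l^q$ embedding on a discrete space. The paper itself gives no proof of this lemma --- it simply cites Hua--Li \citep{MR4328634} as it is well known --- so there is nothing to compare against; your write-up would serve as a self-contained proof.
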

Using the above lemma, we can prove the following result, which ensures the functional is bounded from below on the sphere $S_a$.
\begin{lemma}\label{boundness}
    Assume $V(x)$ satisfies condition $(V_0)$, then $\Phi(u)$ is bounded from below on $S_a$ for all $a\textgreater0$. Moreover, the minimizing sequence of $E_a$ is bounded in $\mathcal{H}$. 
\end{lemma}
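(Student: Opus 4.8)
The plan is to control the nonlinear term $\int_{\mathbb{Z}^d}F(x,u)\,dx$ uniformly over $S_a$ and then to exploit the lower bound on $V$ that is implicit in the definition of $\mathcal{H}$. The first step is to deduce from $(f1)$ and $(f2)$ a growth estimate
\[
|F(x,s)| \le \tfrac{\epsilon}{2}\,s^2 + C_\epsilon\,|s|^{q+1}, \qquad x\in\mathbb{Z}^d,\ s\in\mathbb{R},
\]
valid for every $\epsilon>0$ with $C_\epsilon$ independent of $x$. Indeed $(f1)$ supplies $\delta>0$ and $R>0$, independent of $x$, with $|f(x,s)|\le\epsilon|s|$ for $|s|\le\delta$ and $|f(x,s)|\le\epsilon|s|^{q}$ for $|s|\ge R$; on the band $\delta\le|s|\le R$ the uniform convergence $f(x,s)\to\tilde f(s)$ for bounded $s$ from $(f2)$, combined with continuity of $f(\cdot,s)$ on the finite set of vertices where that convergence is not yet effective, gives a uniform bound $|f(x,s)|\le M_0$, which I absorb into the $|s|^q$ term using $|s|\ge\delta$. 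Integrating in $s$ yields the displayed bound on $F$.

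Next, on $S_a$ we have $\|u\|_2^2=a$, and since $q+1>2$, Lemma~\ref{normcontrol} gives $\|u\|_{q+1}\le\|u\|_2=\sqrt a$, hence $\|u\|_{q+1}^{q+1}\le a^{(q+1)/2}$. Therefore
\[
\int_{\mathbb{Z}^d}F(x,u)\,dx \le \tfrac{\epsilon}{2}a + \tfrac{C_\epsilon}{q+1}\,a^{(q+1)/2} =: K_a < \infty .
\]
Writing $V_0:=\inf_{x\in\mathbb{Z}^d}V(x)$, which is finite by the very definition of $\|\cdot\|_{\mathcal H}$, and using $V(x)\ge V_0$ together with $\int_{\mathbb{Z}^d}|\nabla u|^2\,dx\ge0$, we obtain for every $u\in S_a$
\[
\Phi(u) = \tfrac12\!\int_{\mathbb{Z}^d}\!\big(|\nabla u|^2 + V(x)u^2\big)\,dx - \int_{\mathbb{Z}^d}F(x,u)\,dx \ \ge\ \tfrac12 V_0 a - K_a ,
\]
so $E_a\ge \tfrac12 V_0 a - K_a>-\infty$, which is the boundedness from below.

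For the second assertion, let $(u_n)\subset S_a$ be a minimizing sequence, so $\Phi(u_n)\le E_a+1$ for $n$ large. The same bound on $\int F$ then gives
\[
\int_{\mathbb{Z}^d}\!\big(|\nabla u_n|^2 + (V(x)-V_0)u_n^2\big)\,dx \ \le\ 2\big(E_a+1+K_a\big) - V_0 a ,
\]
and since the left-hand side equals $\|u_n\|_{\mathcal H}^2 - \|u_n\|_2^2 = \|u_n\|_{\mathcal H}^2 - a$, the sequence $(u_n)$ is bounded in $\mathcal{H}$.

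The only genuinely delicate point is the uniform-in-$x$ control of $f$, and hence of $F$, on the intermediate range $\delta\le|s|\le R$: it is precisely here that one must invoke $(f2)$ rather than $(f1)$ alone, splitting $\mathbb{Z}^d$ into the finitely many vertices where $f(\cdot,s)$ is not yet uniformly close to $\tilde f$ and the complementary set where it is. Everything else is a routine combination of this growth estimate with Lemma~\ref{normcontrol} and the algebraic rearrangement of $\|\cdot\|_{\mathcal H}^2$.
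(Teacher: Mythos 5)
Your proof is correct and follows essentially the same route as the paper's: the uniform growth estimate $|F(x,s)|\le\epsilon s^2+C_\epsilon|s|^{q+1}$ derived from $(f0)$--$(f2)$, the embedding $\Vert u\Vert_{q+1}\le\Vert u\Vert_2$ from Lemma~\ref{normcontrol}, the lower bound $V\ge\inf V>-\infty$, and the algebraic rearrangement of $\Vert\cdot\Vert_{\mathcal H}^2$ to bound the minimizing sequence. Your extra care on the intermediate range $\delta\le|s|\le R$ merely fills in what the paper dismisses as ``one easily checks.''
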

\begin{proof}
    By conditions $(f0)(f1)(f2)$,  one easily checks that for any $\epsilon\textgreater0$, there exists $C_{\epsilon}$ such that
    \[
    \vert F(x,u)\vert\leq \epsilon\vert u\vert^2+C_{\epsilon}\vert u\vert^{q+1}.
    \]
    From condition $(V_{0}),$  we have $V(x)\geq -C$ for some $C$ and all $x\in \mathbb{Z}^d$. By Lemma \ref{normcontrol} we have an estimate of $\Phi(u)$ over $S_a$ 
    \begin{align*}
        &\Phi(u)=\frac{1}{2}\int_{\mathbb{Z}^d}\vert \nabla u\vert^2dx+\frac{1}{2}\int_{\mathbb{Z}^d}V(x)u^2dx-\int_{\mathbb{Z}^d}F(x,u)dx\\
        &\geq -\frac{C}{2}a-\epsilon\int_{\mathbb{Z}^d}\vert u\vert^2dx-C_{\epsilon}\int_{\mathbb{Z}^d}\vert u\vert^{q+1 }dx\\
        &\geq  -\frac{C}{2}a-\epsilon a-C_{\epsilon} a^{\frac{q+1}{2}}.
    \end{align*}
     
    Given a minimizing sequence $\{u_n\}\subset S_a$ such that $\lim\limits_{n\rightarrow\infty}\Phi(u_n)=E_a$, there is a universal constant $C$ such that $\Phi(u_n)\leq C$, since
    \[
    \Phi(u_n)\geq\frac{1}{2}\Vert u_n\Vert_{\mathcal{H}}^2+(\frac{1}{2}\inf V(x)-\frac{1}{2})a-\epsilon a-C_{\epsilon}a^{\frac{q+1}{2}}
    \]
    Hence, $\{u_n\}$ is a bounded sequence in $\mathcal{H}$.
\end{proof}
Compared to the Euclidean space, by Lemma \ref{boundness} we do not require the nonlinearity to be mass sub-critical to ensure the functional is bounded from below on the sphere on the lattice graph. Next, we recall the discrete Lions lemma corresponding to \citep{MR778970} in $\mathbb{R}^d$, see \citep{MR4564936} for the proof.
\begin{lemma}\label{vanish}
    Let $1\leq r\textless\infty$. Assume that $\{ u_n\}$ is bounded in $l^{r}(\mathbb{Z}^d)$ and $\Vert u_n\Vert_{\infty}\rightarrow 0$ as $n\rightarrow\infty$. Then for any $r\textless s\textless\infty$, as $n\rightarrow\infty$
    \[
    \Vert u_n\Vert_{s}\rightarrow0.
    \]
\end{lemma}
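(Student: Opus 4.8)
The plan is to establish this by a single elementary interpolation estimate, exploiting the fact that on the lattice the underlying measure is the counting measure, so the $l^s$ norm can be dominated pointwise by a power of the sup-norm times the $l^r$ norm. Since $s>r$, the exponent $s-r$ is strictly positive, and this is exactly what converts the uniform (sup-norm) decay into $l^s$ decay.

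Concretely, I would fix $r<s<\infty$ and write, for each $n$,
\begin{align*}
\Vert u_n\Vert_s^s=\sum_{x\in\mathbb{Z}^d}\vert u_n(x)\vert^s
=\sum_{x\in\mathbb{Z}^d}\vert u_n(x)\vert^{s-r}\,\vert u_n(x)\vert^{r}
\leq \Vert u_n\Vert_\infty^{\,s-r}\sum_{x\in\mathbb{Z}^d}\vert u_n(x)\vert^{r}
=\Vert u_n\Vert_\infty^{\,s-r}\,\Vert u_n\Vert_r^{r},
\end{align*}
where in the inequality I bound the first factor pointwise by its supremum. By hypothesis $\{u_n\}$ is bounded in $l^r(\mathbb{Z}^d)$, say $\Vert u_n\Vert_r\leq M$ for all $n$, so $\Vert u_n\Vert_s^s\leq M^{r}\,\Vert u_n\Vert_\infty^{\,s-r}$.

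To finish, I would let $n\to\infty$: since $\Vert u_n\Vert_\infty\to 0$ and the exponent $s-r>0$ is fixed, the right-hand side tends to $0$, hence $\Vert u_n\Vert_s^s\to 0$ and therefore $\Vert u_n\Vert_s\to 0$, as claimed.

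I do not expect any genuine obstacle here; the only point requiring attention is that the argument depends essentially on $s>r$ (to guarantee $s-r>0$), which is precisely the hypothesis. It is worth emphasizing that this is far simpler than the Euclidean vanishing lemma of Lions, where pointwise or uniform smallness does not directly yield $L^s$ decay and one must invoke a covering/translation argument; on the lattice the discreteness of the counting measure makes the elementary interpolation above sufficient, which is why the cited reference treats it as a short computation.
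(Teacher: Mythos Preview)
Your proof is correct. The paper does not actually prove this lemma but only cites an external reference for it, so there is no ``paper's own proof'' to compare against; that said, your elementary interpolation estimate $\Vert u_n\Vert_s^s\leq \Vert u_n\Vert_\infty^{s-r}\Vert u_n\Vert_r^r$ is precisely the standard argument one finds in the cited source, and your remark contrasting this with the Euclidean Lions lemma is apt.
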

We also need the following lemma.
\begin{lemma}\label{Fvanish}
    Let $\{u_n\}\subset l^2(\mathbb{Z}^d)$ be a bounded sequence. If either 
    $\lim\limits_{n\rightarrow\infty}\Vert u_n\Vert_2=0$ or $\lim\limits_{n\rightarrow\infty}\Vert u_n\Vert_{q+1}=0$, then we have 
    \[
    \lim_{n\rightarrow\infty}\int_{\mathbb{Z}^d}F(x,u_n)dx=0.
    \]
\end{lemma}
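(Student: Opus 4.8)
The plan is to prove Lemma~\ref{Fvanish} by reducing both hypotheses to the vanishing of $\|u_n\|_{q+1}$ and then exploiting the growth bound on $F$ already established in the proof of Lemma~\ref{boundness}. Recall that conditions $(f0)(f1)(f2)$ give, for any $\epsilon>0$, a constant $C_\epsilon$ with $|F(x,u)|\le \epsilon|u|^2+C_\epsilon|u|^{q+1}$ uniformly in $x$. Summing over $\mathbb{Z}^d$ yields
\[
\int_{\mathbb{Z}^d}|F(x,u_n)|\,dx\le \epsilon\|u_n\|_2^2+C_\epsilon\|u_n\|_{q+1}^{q+1}.
\]
Since $\{u_n\}$ is bounded in $l^2$, say $\|u_n\|_2^2\le M$, the first term is at most $\epsilon M$, which can be made arbitrarily small; so it suffices to show $\|u_n\|_{q+1}\to 0$ in both cases, after which sending first $n\to\infty$ and then $\epsilon\to 0$ finishes the argument.

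First I would handle the case $\lim_n\|u_n\|_{q+1}=0$, which is immediate: the displayed inequality gives $\limsup_n\int_{\mathbb{Z}^d}|F(x,u_n)|\,dx\le \epsilon M$ for every $\epsilon>0$, hence the limit is $0$. Next I would treat the case $\lim_n\|u_n\|_2=0$. If $q+1>2$, then by Lemma~\ref{normcontrol} we have $\|u_n\|_{q+1}\le\|u_n\|_2\to 0$, so we are reduced to the previous case. If instead $1<q+1\le 2$ (i.e.\ $q\le 1$), note $\|u_n\|_\infty\le\|u_n\|_2\to 0$, so $\{u_n\}$ is bounded in $l^2$ with $\|u_n\|_\infty\to 0$, and Lemma~\ref{vanish} with $r=2$, $s=q+1$ would normally apply only when $s>r$; here instead one uses $\|u_n\|_{q+1}\le\|u_n\|_2^{\,2/(q+1)}\|u_n\|_\infty^{\,(q-1)/(q+1)}\cdot(\text{or simply } \|u_n\|_{q+1}\le\|u_n\|_\infty^{(q-1)/(q+1)}\|u_n\|_2^{2/(q+1)})$, which tends to $0$ since both factors go to $0$. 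Either way $\|u_n\|_{q+1}\to 0$, reducing again to the first case. Actually, the cleanest uniform treatment is: in both hypotheses one has $\|u_n\|_\infty\to 0$ (from $\|u_n\|_2\to0$, or from $\|u_n\|_{q+1}\to 0$ together with $l^{q+1}\hookrightarrow l^\infty$), so by interpolation $\|u_n\|_{q+1}\le \|u_n\|_2^{\theta}\|u_n\|_\infty^{1-\theta}$ or $\|u_n\|_{q+1}\le \|u_n\|_{q+1}$ trivially, and the $l^2$-bound kills the remaining factor.

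The only mild subtlety—hardly an obstacle—is the regime $q<1$ where $q+1<2$, so that Lemma~\ref{normcontrol} does not directly dominate the $l^{q+1}$ norm by the $l^2$ norm; one patches this by the elementary interpolation inequality $\|u_n\|_{q+1}^{q+1}\le \|u_n\|_\infty^{q-1}\|u_n\|_2^2$ (valid since $q-1\ge 0$ fails, so more carefully $\|u_n\|_{q+1}^{q+1}=\sum|u_n|^{q+1}=\sum|u_n|^{q-1}|u_n|^2\le$ treat as $\sum |u_n|^{q+1}$ with $q+1\le 2$: then $\|u_n\|_{q+1}\ge\|u_n\|_2$ by Lemma~\ref{normcontrol} in the wrong direction). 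To avoid any confusion I would simply observe that whenever $\|u_n\|_2\to0$ we also get $\|u_n\|_{q+1}\to 0$ by: if $q+1\ge 2$ use $\|u_n\|_{q+1}\le\|u_n\|_2$; if $q+1<2$ use $\|u_n\|_{q+1}^{q+1}\le \|u_n\|_2^2\,\|u_n\|_\infty^{\,q-1}$ is invalid, so instead bound $|F(x,s)|\le \epsilon|s|^2+C_\epsilon|s|^{q+1}$ is replaced (when $q<1$) by $|F(x,s)|\le \epsilon|s|^2$ for $|s|\le 1$ plus a bound for $|s|\ge 1$ where $|s|^{q+1}\le|s|^2$; since $\|u_n\|_\infty\to0$, eventually $|u_n(x)|\le 1$ for all $x$, giving $\int|F(x,u_n)|\le\epsilon\|u_n\|_2^2$ directly. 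This last remark dispatches the low-$q$ case cleanly, and in all cases the conclusion $\int_{\mathbb{Z}^d}F(x,u_n)\,dx\to 0$ follows by letting $\epsilon\to0$.
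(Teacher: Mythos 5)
Your proposal is correct and follows essentially the same route as the paper: the growth bound $|F(x,u)|\le \epsilon|u|^2+C_\epsilon|u|^{q+1}$ from $(f0)$--$(f2)$, summed over $\mathbb{Z}^d$, combined with Lemma~\ref{normcontrol} to reduce the $\Vert u_n\Vert_2\to 0$ case to $\Vert u_n\Vert_{q+1}\to 0$, and then an $\epsilon\to 0$ limit. The lengthy digression about the regime $q+1\le 2$ is vacuous --- condition $(f1)$ fixes $q>1$, so $q+1>2$ always holds and Lemma~\ref{normcontrol} applies directly --- and could simply be deleted.
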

\begin{proof}
    For any $\epsilon\textgreater0$, there exists $C_{\epsilon}\textgreater0$ such that
    \[
    \vert F(x,u)\vert\leq \epsilon \vert u\vert^2+C_{\epsilon}\vert u\vert^{q+1}.
    \]
    If $\lim\limits_{n\rightarrow\infty}\Vert u_n\Vert_2=0$, then we have 
    \begin{align*}
        &\vert\int_{\mathbb{Z}^d}F(x,u_n)dx\vert\\
        &\leq \int_{\mathbb{Z}^d}\epsilon\vert u_n\vert^2+C_{\epsilon}\vert u_n\vert^{q+1}dx\\
        &\leq \int_{\mathbb{Z}^d}\epsilon\vert u_n\vert^2dx+C_{\epsilon}(\int_{\mathbb{Z}^d}\vert u_n\vert^2dx)^{\frac{q+1}{2}}.
    \end{align*}
    Hence letting $n\rightarrow\infty$ we have $\lim\limits_{n\rightarrow\infty}\vert\int_{\mathbb{Z}^d}F(x,u_n)dx\vert=0$. If $\lim\limits_{n\rightarrow\infty}\Vert u_n\Vert_{q+1}=0$, then by a similar argument we have
    \begin{align*}
        &\vert\int_{\mathbb{Z}^d}F(x,u_n)dx\vert\\
        &\leq\int_{\mathbb{Z}^d}\epsilon\vert u_n\vert^2+C_{\epsilon}\vert u_n\vert^{q+1}dx\\
        &\leq C\epsilon+C_{\epsilon}\int_{\mathbb{Z}^d}\vert u_n\vert^{q+1}dx.
    \end{align*}
    Letting $n\rightarrow\infty$, we have
    \[
    \varlimsup\limits_{n\rightarrow\infty}\vert\int_{\mathbb{Z}^d}F(x,u_n)dx\vert\leq a\epsilon.
    \]
    Since $\epsilon$ is arbitrary, we have $\lim\limits_{n\rightarrow\infty}\vert\int_{\mathbb{Z}^d}F(x,u_n)dx\vert=0.$
\end{proof}
We introduce the classical Brezis-Lieb lemma \citep{MR699419}.
\begin{lemma}\label{BreLieb}
    Let $(\Omega,\Sigma,\tau)$ be a measure space, where $\Omega$ is a set equipped with a $\sigma-$algebra $\Sigma$ and a Borel measure $\tau:\Sigma\rightarrow[0,\infty]$. Given a sequence $\{u_n\}\subset L^p(\Omega,\Sigma,\tau)$ with $0\textless p\textless\infty$. If $\{u_n\}$ is uniformly bounded in $L^p(\Omega,\Sigma,\tau)$ and $u_n\rightarrow u$, $\tau$-a.e. in $\Omega$, then we have that
    \[
    \lim\limits_{n\rightarrow\infty}(\Vert u_n\Vert_{L^p(\Omega)}^p-\Vert u_n-u\Vert_{L^p(\Omega)}^p)=\Vert u\Vert_{L^p(\Omega)}^p.
    \]
\end{lemma}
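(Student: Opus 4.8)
The plan is to follow the classical argument of Brezis and Lieb, reducing the claim to an elementary pointwise inequality combined with the dominated convergence theorem. First I would record the elementary inequality: for every $\epsilon\textgreater0$ there is a constant $C_\epsilon\textgreater0$ such that for all scalars $a,b$,
\[
\big|\,|a+b|^p-|a|^p\,\big|\leq\epsilon|a|^p+C_\epsilon|b|^p.
\]
For $0\textless p\leq1$ this follows from the subadditivity $|a+b|^p\leq|a|^p+|b|^p$ (one may even take $\epsilon=0$, $C_\epsilon=1$); for $p\textgreater1$ it follows from the mean value theorem applied to $t\mapsto|t|^p$, which gives the bound $p|b|\,(|a|+|b|)^{p-1}$, followed by Young's inequality to split this cross term into an $\epsilon|a|^p$ piece and a $C_\epsilon|b|^p$ piece.

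Next I would set $a=u_n-u$ and $b=u$, so that $u_n=a+b$, and add $|b|^p$ to both sides of the inequality to obtain, $\tau$-a.e.,
\[
f_n:=\big|\,|u_n|^p-|u_n-u|^p-|u|^p\,\big|\leq\epsilon|u_n-u|^p+(C_\epsilon+1)|u|^p.
\]
The device of Brezis and Lieb is then to consider the truncated functions $W_n^\epsilon:=\big(f_n-\epsilon|u_n-u|^p\big)^+$, which satisfy $0\leq W_n^\epsilon\leq(C_\epsilon+1)|u|^p$. Since $u_n\to u$ $\tau$-a.e., both $f_n\to0$ and $|u_n-u|^p\to0$ pointwise, so $W_n^\epsilon\to0$ $\tau$-a.e.; moreover the dominating function $(C_\epsilon+1)|u|^p$ lies in $L^1(\Omega)$ because $u\in L^p(\Omega)$, which I would establish by Fatou's lemma from $u_n\to u$ a.e. together with the uniform $L^p$-bound. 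The dominated convergence theorem then yields $\int_\Omega W_n^\epsilon\,d\tau\to0$.

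Finally I would combine these estimates. Using $f_n\leq W_n^\epsilon+\epsilon|u_n-u|^p$ and the uniform bound $\sup_n\int_\Omega|u_n-u|^p\,d\tau=:M\textless\infty$ (a consequence of the uniform boundedness of $\{u_n\}$ and $u\in L^p(\Omega)$), I obtain
\[
\int_\Omega f_n\,d\tau\leq\int_\Omega W_n^\epsilon\,d\tau+\epsilon M,
\]
whence $\varlimsup_{n\to\infty}\int_\Omega f_n\,d\tau\leq\epsilon M$. Since $\epsilon\textgreater0$ is arbitrary, $\int_\Omega f_n\,d\tau\to0$, and because
\[
\Big|\,\Vert u_n\Vert_{L^p(\Omega)}^p-\Vert u_n-u\Vert_{L^p(\Omega)}^p-\Vert u\Vert_{L^p(\Omega)}^p\,\Big|\leq\int_\Omega f_n\,d\tau,
\]
the stated limit follows. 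The main obstacle is exactly this passage to the limit: one cannot apply dominated convergence directly to $f_n$, since the remainder $\epsilon|u_n-u|^p$ need not be dominated by a single integrable function uniformly in $n$; the truncation $W_n^\epsilon$ is precisely the trick that isolates an integrably-dominated part (handled by dominated convergence) from a small controllable remainder (handled by the uniform $L^p$-bound and the arbitrariness of $\epsilon$).
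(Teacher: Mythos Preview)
Your proof is correct and is precisely the classical argument of Brezis and Lieb. The paper does not supply its own proof of this lemma at all; it merely states the result and cites the original reference \cite{MR699419}, so there is nothing to compare against beyond noting that your argument is the standard one from that source.
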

A direct consequence of Lemma \ref{BreLieb} is the following; see also \citep{MR4564936}.
\begin{corollary}\label{NormBreLieb}
For a bounded sequence $\{u_n\}\subset l^2(\mathbb{Z}^d)$ such that $u_n\rightharpoonup u$ in $l^2(\mathbb{Z}^d)$, we have\\
    (i)\[
    \lim\limits_{n\rightarrow\infty}(\Vert u_n\Vert_{l^2(\mathbb{Z}^d)}^2-\Vert u_n-u\Vert_{l^2(\mathbb{Z}^d)}^2)=\Vert u\Vert_{l^2(\mathbb{Z}^d)}^2,
    \]
    (ii)
    \[
    \lim\limits_{n\rightarrow\infty}(\Vert \nabla u_n\Vert_{l^2(\mathbb{Z}^d)}^2-\Vert \nabla (u_n- u)\Vert_{l^2(\mathbb{Z}^d)}^2)=\Vert \nabla u\Vert_{l^2(\mathbb{Z}^d)}^2.
    \]
\end{corollary}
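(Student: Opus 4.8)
The plan is to deduce both identities from the Brezis-Lieb lemma (Lemma \ref{BreLieb}) applied with exponent $p=2$ on a suitable counting-measure space, using the fact recorded in the preliminaries that weak convergence $u_n \rightharpoonup u$ in $l^2(\mathbb{Z}^d)$ forces pointwise convergence $u_n(x) \to u(x)$ for every $x \in \mathbb{Z}^d$ (since each $\delta_x$ lies in the dual).

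For part (i), I would take $\Omega = \mathbb{Z}^d$ equipped with the counting measure $\tau$ and $p=2$. The hypothesis gives that $\{u_n\}$ is bounded in $l^2(\mathbb{Z}^d)=L^2(\mathbb{Z}^d,\tau)$, while the weak-to-pointwise implication gives $u_n \to u$ pointwise, i.e.\ $\tau$-a.e.\ (every point has positive measure, so ``a.e.''\ means ``everywhere''). Lemma \ref{BreLieb} then yields exactly $\lim_n(\Vert u_n\Vert_2^2 - \Vert u_n - u\Vert_2^2)=\Vert u\Vert_2^2$, with no further work required.

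For part (ii), the subtlety is that $\nabla$ is not a pointwise operation on vertices, so I would recast $\Vert\nabla u\Vert_2^2$ as an $L^2$ norm on the edge set. Concretely, let $\mathcal{E}$ be the set of ordered adjacent pairs $(x,y)$ with $x\sim y$, equipped with the counting measure weighted by $\tfrac12$, and for $v\in l^2(\mathbb{Z}^d)$ define the difference function $Dv(x,y)=v(y)-v(x)$. By the definition of the gradient form, $\Vert\nabla v\Vert_2^2=\tfrac12\sum_{(x,y)\in\mathcal{E}}(Dv(x,y))^2=\Vert Dv\Vert_{L^2(\mathcal{E})}^2$. Three checks then finish the argument. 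First, $\{Du_n\}$ is bounded in $L^2(\mathcal{E})$: this follows from the elementary estimate $\Vert\nabla v\Vert_2^2\le 4d\,\Vert v\Vert_2^2$, using that each vertex of $\mathbb{Z}^d$ has exactly $2d$ neighbours and $(v(y)-v(x))^2\le 2v(y)^2+2v(x)^2$, so the $l^2$-boundedness of $\{u_n\}$ transfers to $\{Du_n\}$. Second, $Du_n\to Du$ pointwise on $\mathcal{E}$, which is immediate from $u_n(x)\to u(x)$ and $u_n(y)\to u(y)$ for each pair. Third, the difference operator is linear, so $D(u_n-u)=Du_n-Du$ and hence $\Vert\nabla(u_n-u)\Vert_2^2=\Vert Du_n-Du\Vert_{L^2(\mathcal{E})}^2$. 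Applying Lemma \ref{BreLieb} on $(\mathcal{E},\tfrac12\tau)$ with $p=2$ to the sequence $\{Du_n\}$ and limit $Du$ gives $\lim_n(\Vert Du_n\Vert_{L^2(\mathcal{E})}^2-\Vert Du_n-Du\Vert_{L^2(\mathcal{E})}^2)=\Vert Du\Vert_{L^2(\mathcal{E})}^2$, which is precisely identity (ii).

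The only genuine obstacle is the bookkeeping in (ii): one must correctly identify the measure space on edges and verify that the difference operator carries the weakly convergent, $l^2$-bounded vertex sequence to an $L^2(\mathcal{E})$-bounded, pointwise-convergent edge sequence, after which the Brezis-Lieb lemma applies verbatim. Part (i) is then an immediate specialization of the same scheme.
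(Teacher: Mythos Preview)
Your proposal is correct and matches the paper's approach: the paper states this corollary as ``a direct consequence of Lemma~\ref{BreLieb}'' without further detail, and your argument is precisely the natural elaboration of that claim---applying Brezis--Lieb on the vertex set for (i) and on the edge set for (ii), with pointwise convergence coming from weak convergence in $l^2$. The edge-space bookkeeping you supply for (ii) is exactly what is needed to make the ``direct consequence'' rigorous.
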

We also need a Brezis-Lieb type lemma for $F(x,u)$ as follows, whose proof is similar to \citep[Lemma 2.2]{MR1043058}.
\begin{lemma}\label{FBreLieb}
    For a bounded sequence $\{u_n\}\subset l^2(\mathbb{Z}^d)$ such that $u_n\rightharpoonup u$ in $l^2(\mathbb{Z}^d)$, we have
    \[
    \lim_{n\rightarrow\infty}\int_{\mathbb{Z}^d}F(x,u_n)-F(x,u)-F(x,u_n-u)dx=0
    \]
\end{lemma}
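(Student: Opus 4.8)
The plan is to transcribe the classical Brezis--Lieb argument for nonlinearities to the counting measure on $\mathbb{Z}^d$, where ``integration'' is summation and Lebesgue's dominated convergence theorem applies verbatim. Write $v_n:=u_n-u$ and
\[
W_n(x):=F(x,u_n(x))-F(x,u(x))-F(x,v_n(x)),
\]
so that the assertion amounts to $\sum_{x\in\mathbb{Z}^d}|W_n(x)|\to 0$ (which in turn forces $\int_{\mathbb{Z}^d}W_n\,dx\to 0$). Since $u_n\rightharpoonup u$ in $l^2(\mathbb{Z}^d)$ yields the pointwise convergence $u_n(x)\to u(x)$ for every $x$ (the delta functions lie in $l^2$), and $F(x,\cdot)$ is continuous with $F(x,0)=0$, we have $W_n(x)\to 0$ for each fixed $x$. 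Moreover $\{u_n\}$ and $\{v_n\}$ are bounded in $l^2(\mathbb{Z}^d)$, hence in $l^{q+1}(\mathbb{Z}^d)$ by Lemma~\ref{normcontrol}, and $u\in l^2(\mathbb{Z}^d)\cap l^{q+1}(\mathbb{Z}^d)$.

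First I would record a pointwise splitting estimate: for every $\epsilon>0$ there is $C_\epsilon>0$ with
\[
\bigl|F(x,a+b)-F(x,a)\bigr|\le \epsilon\bigl(|a|^2+|a|^{q+1}\bigr)+C_\epsilon\bigl(|b|^2+|b|^{q+1}\bigr),\qquad \forall\,x\in\mathbb{Z}^d,\ a,b\in\mathbb{R}.
\]
This follows from the growth bound $|f(x,s)|\le\epsilon|s|+C_\epsilon|s|^q$ (a consequence of $(f0)$--$(f2)$, already used in Lemma~\ref{boundness}), the identity $F(x,a+b)-F(x,a)=\int_0^1 f(x,a+tb)\,b\,dt$, the convexity inequality $(|a|+|b|)^q\le 2^{q-1}(|a|^q+|b|^q)$ valid for $q>1$, and Young's inequality, used to absorb the cross terms $|a|\,|b|$ and $|a|^q|b|$ into $\epsilon|a|^2+C_\epsilon|b|^2$ and $\epsilon|a|^{q+1}+C_\epsilon|b|^{q+1}$ respectively.

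Applying the splitting estimate with $a=v_n(x)$, $b=u(x)$, and using $|W_n(x)|\le|F(x,v_n+u)-F(x,v_n)|+|F(x,u)|$ together with $|F(x,u)|\le|u|^2+C|u|^{q+1}$, I obtain
\[
|W_n(x)|\le\epsilon\bigl(|v_n(x)|^2+|v_n(x)|^{q+1}\bigr)+G_\epsilon(x),\quad G_\epsilon(x):=C_\epsilon\bigl(|u(x)|^2+|u(x)|^{q+1}\bigr)+|F(x,u(x))|,
\]
where $G_\epsilon\in l^1(\mathbb{Z}^d)$ because $u\in l^2\cap l^{q+1}$. Setting $h_n^\epsilon(x):=\max\{|W_n(x)|-\epsilon(|v_n(x)|^2+|v_n(x)|^{q+1}),\,0\}$, we have $0\le h_n^\epsilon\le G_\epsilon$ and $h_n^\epsilon(x)\le|W_n(x)|\to 0$ pointwise, so dominated convergence gives $\sum_x h_n^\epsilon(x)\to 0$. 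Since $|W_n|\le h_n^\epsilon+\epsilon(|v_n|^2+|v_n|^{q+1})$ and $\sup_n\sum_x(|v_n|^2+|v_n|^{q+1})<\infty$ by Lemma~\ref{normcontrol}, it follows that $\limsup_{n\to\infty}\sum_x|W_n(x)|\le C\epsilon$, and letting $\epsilon\to 0$ completes the argument.

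I do not expect a genuine obstacle here: this is a routine adaptation of the Euclidean proof. The one step requiring care is converting the bound ``$\epsilon$-small term $+$ a fixed $l^1$ function $G_\epsilon$'' into actual convergence, which is exactly what the truncation $h_n^\epsilon$ accomplishes; its validity rests on $G_\epsilon$ being genuinely summable (hence on $u\in l^{q+1}$ via Lemma~\ref{normcontrol} and on $F(x,u)\in l^1$), and on the pointwise convergence $W_n(x)\to 0$ inherited from weak $l^2$ convergence.
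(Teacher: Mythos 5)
Your proof is correct, but it follows a genuinely different route from the paper's. You run the classical Brezis--Lieb machinery: a pointwise splitting inequality $|F(x,a+b)-F(x,a)|\le\epsilon(|a|^2+|a|^{q+1})+C_\epsilon(|b|^2+|b|^{q+1})$ obtained from the growth bound on $f$ and Young's inequality, followed by the truncation $h_n^\epsilon=\max\{|W_n|-\epsilon(|v_n|^2+|v_n|^{q+1}),0\}$ dominated by a fixed $l^1$ function built from $u$, and dominated convergence. The paper instead follows the Zhu--Cao style ball-splitting argument: it decomposes the sum over $B_R$ and $\mathbb{Z}^d\setminus B_R$, handles the finite ball by the pointwise convergence $u_n(x)\to u(x)$ (a finite sum of terms each tending to zero), and controls the exterior terms by the mean value theorem $F(x,u_n)-F(x,u_n-u)=f(x,\theta u+(u_n-u))u$ together with H\"older and the smallness of the tails $\Vert u\Vert_{l^2(B_R^c)}$, $\Vert u\Vert_{l^{q+1}(B_R^c)}$. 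Both arguments rest on the same two inputs (pointwise convergence from weak $l^2$ convergence, and the $\epsilon|s|+C_\epsilon|s|^q$ growth of $f$), but your version transfers verbatim to any $\sigma$-finite measure space, whereas the paper's exploits the discreteness of $\mathbb{Z}^d$ (balls are finite sets) to avoid the truncation trick. The one step in your write-up that genuinely needs care --- absorbing the cross terms $|a|\,|b|$ and $|a|^q|b|$ via Young's inequality so that the $\epsilon$-weight lands on the $a$-terms --- is stated correctly and does work for $q>1$.
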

\begin{proof}
    We directly compute that
    \[
    \begin{aligned}
        &\vert\int_{\mathbb{Z}^d}F(x,u_n)-F(x,u)-F(x,u_n-u)dx\vert\\
        &\leq\vert\int_{B_R}(F(x,u_n)-F(x,u))dx\vert+\vert\int_{B_R}F(x,u_n-u)dx\vert+\vert\int_{\mathbb{Z}^d\backslash B_R}F(x,u)dx\vert\\
        &\quad+\vert\int_{\mathbb{Z}^d\backslash B_R}(F(x,u_n)-F(x,u_n-u))dx\vert.
    \end{aligned}
    \]
    For the last term, by H\"{o}lder's inequality and the boundness of $u_n$, we have
    \[
    \begin{aligned}
        &\vert\int_{\mathbb{Z}^d\backslash B_R}(F(x,u_n)-F(x,u_n-u))dx\vert\\
        &=\vert\int_{\mathbb{Z}^d\backslash B_R}(F(x,u+(u_n-u))-F(x,u_n-u))dx\vert\\
        &=\vert\int_{\mathbb{Z}^d\backslash B_R}f(x,\theta u+(u_n-u))udx\vert\\
        &\leq C\int_{\mathbb{Z}^d\backslash B_R}\vert \theta u+(u_n-u)\vert\vert u\vert +\vert\theta u+(u_n-u)\vert^{q}\vert u\vert dx\\
        &\leq C((\int_{\mathbb{Z}^d\backslash B_R}\vert u\vert^2dx)^{\frac{1}{2}}+(\int_{\mathbb{Z}^d\backslash B_R}\vert u\vert^{q+1}dx)^{\frac{1}{q+1}})\\
        &+C(\int_{\mathbb{Z}^d\backslash B_R}\vert u\vert^2dx+\int_{\mathbb{Z}^d\backslash B_R}\vert u\vert^{q+1}dx).
        \end{aligned}
    \]
    Therefore, for any $\epsilon\textgreater0$, we can take $R$ sufficiently large such that 
    \[
    \begin{aligned}
        \vert\int_{\mathbb{Z}^d\backslash B_R}(F(x,u_n)-F(x,u_n-u))dx\vert\leq\epsilon
    \end{aligned}
    \]
    and 
    \[
    \begin{aligned}
        &\vert\int_{\mathbb{Z}^d\backslash B_R}F(x,u)dx\vert\\
        &\leq C(\int_{\mathbb{Z}^d\backslash B_R}\vert u\vert^2dx+\int_{\mathbb{Z}^d\backslash B_R}\vert u\vert^{q+1}dx)\\
        &\leq\epsilon.
    \end{aligned}
    \]
    Now fix $R.$ For the first two terms, since $u_n\rightharpoonup u$ in $l^2(\mathbb{Z}^d)$, for any $ x\in\mathbb{Z}^d$, $u_n(x)\rightarrow u(x)$. Thus we have 
    \[
    \begin{aligned}
        &\lim_{n\rightarrow\infty}\vert\int_{B_R}(F(x,u_n)-F(x,u))dx\vert=0,\\
        &\lim_{n\rightarrow\infty}\vert\int_{B_R}F(x,u_n-u)dx\vert=0.
    \end{aligned}
    \]
    Hence
    \[
    \varlimsup\limits_{n\rightarrow\infty}\vert\int_{\mathbb{Z}^d}F(x,u_n)-F(x,u)-F(x,u_n-u)dx\vert\leq C\epsilon.
    \]
    Since $\epsilon$ is arbitrary, we get the desired results.
\end{proof}
Combining Corollary \ref{NormBreLieb} with Lemma \ref{FBreLieb}, we have the following lemma.
\begin{lemma}\label{PhiBreLieb}
     For a bounded sequence $\{u_n\}\subset l^2(\mathbb{Z}^d)$ such that $u_n\rightharpoonup u$ in $l^2(\mathbb{Z}^d)$, suppose that V(x) satisfies condition $(V_0)$ with $V_{\infty}=0$, then we have
    \[
    \lim_{n\rightarrow\infty}(\Phi(u_n)-\Phi(u_n-u))=\Phi(u).
    \]
\end{lemma}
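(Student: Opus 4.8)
The plan is to split $\Phi(u_n)-\Phi(u_n-u)$ into its three constituent pieces and treat each by a Brezis--Lieb type decomposition. Writing
\[
\Phi(u_n)-\Phi(u_n-u)=\tfrac12\bigl(\|\nabla u_n\|_2^2-\|\nabla(u_n-u)\|_2^2\bigr)+\tfrac12\int_{\mathbb{Z}^d}V(x)\bigl(u_n^2-(u_n-u)^2\bigr)\,dx-\int_{\mathbb{Z}^d}\bigl(F(x,u_n)-F(x,u_n-u)\bigr)\,dx ,
\]
the first term is handled immediately by Corollary~\ref{NormBreLieb}(ii), whose limit is $\tfrac12\|\nabla u\|_2^2$. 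For the last term I would add and subtract $\int_{\mathbb{Z}^d}F(x,u)\,dx$ and apply Lemma~\ref{FBreLieb}, obtaining the limit $-\int_{\mathbb{Z}^d}F(x,u)\,dx$. So the whole problem reduces to the potential term.

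This is the one step where the hypothesis $V_\infty=0$ is genuinely used, so it is the place I would be most careful. From $(V_0)$ with $V_\infty=0$ we have $V(x)\le 0$ for all $x$ and $\lim_{|x|\to\infty}V(x)=0$; hence $V$ is small outside some finite lattice ball $B_R$ and takes finitely many values inside $B_R$, so $V\in l^\infty(\mathbb{Z}^d)$. In particular $\mathcal H=l^2(\mathbb{Z}^d)$, so $\Phi$ is well defined on the given sequence, and $Vu\in l^2(\mathbb{Z}^d)$ whenever $u\in l^2(\mathbb{Z}^d)$. Using the elementary identity $u_n^2-(u_n-u)^2-u^2=2u(u_n-u)$ I then get
\[
\int_{\mathbb{Z}^d}V(x)\bigl(u_n^2-(u_n-u)^2\bigr)\,dx-\int_{\mathbb{Z}^d}V(x)u^2\,dx=2\langle u_n-u,\,Vu\rangle_{l^2(\mathbb{Z}^d)},
\]
and the right-hand side tends to $0$ because $u_n\rightharpoonup u$ in $l^2(\mathbb{Z}^d)$ while $Vu\in l^2(\mathbb{Z}^d)$. (If one prefers to avoid quoting weak convergence against the test function $Vu$, the same conclusion follows by splitting $\mathbb{Z}^d=B_R\cup(\mathbb{Z}^d\setminus B_R)$: on the finite set $B_R$ the pointwise convergence $u_n(x)\to u(x)$ kills the cross term, and on $\mathbb{Z}^d\setminus B_R$ the smallness of $V$ together with Cauchy--Schwarz and the $l^2$-boundedness of $\{u_n\}$ bounds the remainder by $C\epsilon$.) Hence the potential term converges to $\tfrac12\int_{\mathbb{Z}^d}V(x)u^2\,dx$.

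Adding the three limits gives $\tfrac12\|\nabla u\|_2^2+\tfrac12\int_{\mathbb{Z}^d}V(x)u^2\,dx-\int_{\mathbb{Z}^d}F(x,u)\,dx=\Phi(u)$. I do not expect a serious obstacle here; the only point that must be pinned down is the implication ``$(V_0)$ with $V_\infty=0$ $\Rightarrow$ $V$ bounded'', since the upper bound $V_\infty<\infty$ is exactly what puts $Vu$ in $l^2$ and thereby lets the weak convergence do the work — without it the statement itself would be in doubt.
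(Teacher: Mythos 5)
Your proof is correct, and its skeleton is the same as the paper's: split $\Phi(u_n)-\Phi(u_n-u)$ into gradient, potential, and nonlinearity pieces, dispatch the first with Corollary~\ref{NormBreLieb}(ii) and the third with Lemma~\ref{FBreLieb}, and concentrate on the potential term. The only genuine difference is how you handle that term. The paper fixes $\epsilon>0$, chooses $R$ with $|V|\le\epsilon$ on $B_R^c$, uses pointwise convergence on the finite set $B_R$, and bounds the tail by $C\epsilon$ via the $l^2$-boundedness of $\{u_n\}$ --- i.e.\ exactly your parenthetical fallback. Your primary route, the exact identity $u_n^2-(u_n-u)^2-u^2=2u(u_n-u)$ together with $\langle u_n-u,\,Vu\rangle\to0$, is cleaner and slightly more general: it needs only $V\in l^\infty(\mathbb{Z}^d)$ (which $(V_0)$ with $V_\infty<\infty$ does supply, as you note), not the decay $V(x)\to0$ at infinity, whereas the paper's argument uses that decay in an essential way. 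Both are valid; yours avoids the $\epsilon$-bookkeeping at the cost of having to observe that $Vu\in l^2(\mathbb{Z}^d)$ so that weak convergence may be tested against it.
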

\begin{proof}
    By Corollary \ref{NormBreLieb} and Lemma \ref{FBreLieb}, it suffices to prove that 
    \[
    \lim\limits_{n\rightarrow\infty}(\int_{\mathbb{Z}^d}V(x)u_n^2dx-\int_{\mathbb{Z}^d}V(x)(u_n-u)^2dx)=\int_{\mathbb{Z}^d}V(x)u^2dx.
    \]
    For any $\epsilon\textgreater0$, there exists sufficently large $R$ such that $\vert V(x)\vert\leq \epsilon$ for all $x\in B_{R}^c.$ Hence, we have
    \begin{align*}
         &\vert\int_{\mathbb{Z}^d}V(x)u_n^2dx-\int_{\mathbb{Z}^d}V(x)(u_n-u)^2dx-\int_{\mathbb{Z}^d}V(x)u^2dx\vert\\
         &\leq\vert\int_{B_R}V(x)u_n^2dx-\int_{B_R}V(x)(u_n-u)^2dx-\int_{B_R}V(x)u^2dx\vert\\
         &+\vert\int_{B_R^c}V(x)u_n^2dx-\int_{B_R^c}V(x)(u_n-u)^2dx-\int_{B_R^c}V(x)u^2dx\vert\\
         &\leq\vert\int_{B_R}V(x)u_n^2dx-\int_{B_R}V(x)(u_n-u)^2dx-\int_{B_R}V(x)u^2dx\vert+C\epsilon.
    \end{align*}
    Since $u_n\rightharpoonup u$, for all $x\in B_R$, we have $u_n(x)\rightarrow u(x)$ as $n\rightarrow\infty$. 
    Hence, we have
    \[\varlimsup\limits_{n\rightarrow\infty}\vert(\int_{\mathbb{Z}^d}V(x)u_n^2dx-\int_{\mathbb{Z}^d}V(x)(u_n-u)^2dx)-\int_{\mathbb{Z}^d}V(x)u^2dx\vert\leq C\epsilon.
    \]
    Since $\epsilon$ is arbitrary, we prove the desired result.
\end{proof}

\section{Trapping potential case $V_{\infty}=\infty$}\label{sec:3}
In this section, we prove Theorem~\ref{thm1} for the case $V_{\infty}=\infty$, i.e. so-called trapping potential. Similar to the Euclidean space, trapping potential on the lattice can provide enough compactness, and we have the following well-known lemma; see also \citep{MR2671878}.
\begin{lemma}\label{lem:comp}
    Suppose that V(x) satisfies condition $(V_0)$ with $V_{\infty}=\infty$, then $\mathcal{H}$ compactly embeds into $l^{p}(\mathbb{Z}^d)$ for $p\in[2,\infty]$.
\end{lemma}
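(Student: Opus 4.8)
The plan is to reduce the statement to the case $p=2$ and then exploit the coercivity of $V$ at infinity through a tail estimate. First note that $(V_0)$ with $V_\infty=\infty$ forces $V$ to be bounded below: there is $R_0$ with $V(x)\ge 0$ for $x\notin B_{R_0}$, and $B_{R_0}$ is a finite set, so $\inf_{\mathbb Z^d}V>-\infty$ and $\|\cdot\|_{\mathcal H}$ is well defined with $(V(x)-\inf V)\ge 0$. Since $\|u\|_2\le\|u\|_{\mathcal H}$, the embedding $\mathcal H\hookrightarrow l^2(\mathbb Z^d)$ is continuous, and Lemma~\ref{normcontrol} (together with the trivial bound $\|\cdot\|_\infty\le\|\cdot\|_2$) upgrades this to a continuous embedding $\mathcal H\hookrightarrow l^p(\mathbb Z^d)$ for every $p\in[2,\infty]$, with $\|v\|_p\le\|v\|_2$. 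In particular, once we know that a bounded sequence in $\mathcal H$ admits a subsequence converging \emph{strongly in $l^2$}, that subsequence automatically converges strongly in every $l^p$, $p\in[2,\infty]$; so it suffices to prove compactness of $\mathcal H\hookrightarrow l^2(\mathbb Z^d)$.

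Next I would take $\{u_n\}$ bounded in $\mathcal H$, say $\|u_n\|_{\mathcal H}\le M$. It is bounded in $l^2(\mathbb Z^d)$, so after passing to a subsequence $u_n\rightharpoonup u$ in $l^2(\mathbb Z^d)$, which as recalled in Section~\ref{sec:2} gives the pointwise convergence $u_n(x)\to u(x)$ for every $x\in\mathbb Z^d$. Applying Fatou's lemma term by term to $\sum_x\big(1+(V(x)-\inf V)\big)|\cdot|^2$ and to $\sum_x\Gamma(\cdot)$ (each vertex has finitely many neighbours, so $\Gamma(u_n)(x)\to\Gamma(u)(x)$ pointwise) shows $u\in\mathcal H$ with $\|u\|_{\mathcal H}\le M$. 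Hence $v_n:=u_n-u$ is bounded in $\mathcal H$, namely $\|v_n\|_{\mathcal H}\le 2M$, satisfies $v_n(x)\to0$ for every $x$, and it remains to prove $\|v_n\|_2\to0$.

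The core of the argument is the tail estimate. Fix $\varepsilon>0$. Since $V(x)\to\infty$ as $|x|\to\infty$, there is $R>0$ with $V(x)-\inf V\ge\varepsilon^{-1}$ for all $x\notin B_R$, and therefore
\[
\sum_{x\notin B_R}|v_n(x)|^2\le\varepsilon\sum_{x\notin B_R}\big(V(x)-\inf V\big)|v_n(x)|^2\le\varepsilon\,\|v_n\|_{\mathcal H}^2\le 4M^2\varepsilon .
\]
On the other hand $B_R$ is a finite set and $v_n(x)\to0$ at each of its vertices, so $\sum_{x\in B_R}|v_n(x)|^2\to0$ as $n\to\infty$. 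Adding the two contributions gives $\varlimsup_{n\to\infty}\|v_n\|_2^2\le 4M^2\varepsilon$; since $\varepsilon>0$ is arbitrary, $\|v_n\|_2\to0$, i.e. $u_n\to u$ in $l^2(\mathbb Z^d)$, and by the first paragraph $u_n\to u$ in $l^p(\mathbb Z^d)$ for all $p\in[2,\infty]$.

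The only genuinely delicate point is this tail estimate, where the hypothesis $V_\infty=\infty$ is used to trade an arbitrarily small constant against the (uniformly bounded) weighted $\mathcal H$-norm restricted to $B_R^c$. Everything else — passing to a weakly convergent subsequence, extracting pointwise convergence, the Fatou/lower-semicontinuity step, and the finiteness of balls — is the standard lattice machinery and requires no extra care, precisely because bounded subsets of $\mathbb Z^d$ are finite.
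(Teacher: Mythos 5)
Your proof is correct and complete: the reduction to $p=2$ via $\|v\|_p\le\|v\|_2$, the extraction of a pointwise limit, the Fatou step, and the tail estimate trading the divergence of $V$ against the bounded weighted norm on $B_R^c$ all hold, and the finiteness of balls in $\mathbb{Z}^d$ handles the rest. The paper itself gives no proof of this lemma --- it is stated as well known with a citation to Zhang--Liu \citep{MR2671878} --- and your argument is precisely the standard one that reference supplies, so there is nothing to compare beyond noting that you have filled in the omitted details correctly.
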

With the help of the above lemma, we can derive the existence of a ground state normalized solution in the trapping potential case.
\begin{theorem}\label{maintrap}
    Suppose $V_{\infty}=\infty$, then for all $a\textgreater0$, $E_a$ can be attained by a ground state normalized solution $u$.
\end{theorem}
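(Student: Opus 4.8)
The plan is to run the direct method of the calculus of variations, with the compact embedding of Lemma~\ref{lem:comp} doing the work that concentration-compactness must do in the non-trapping case. First I would fix $a>0$ and choose a minimizing sequence $\{u_n\}\subset S_a$ with $\Phi(u_n)\to E_a$; by Lemma~\ref{boundness} we already know $E_a>-\infty$ and that $\{u_n\}$ is bounded in the Hilbert space $\mathcal{H}$. Passing to a subsequence, $u_n\rightharpoonup u$ weakly in $\mathcal{H}$, and since $\mathcal{H}\hookrightarrow l^2(\mathbb{Z}^d)$ continuously this also gives $u_n\rightharpoonup u$ in $l^2(\mathbb{Z}^d)$ and hence pointwise convergence $u_n(x)\to u(x)$ for every $x\in\mathbb{Z}^d$.

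The next step is to prove $u\in S_a$, i.e.\ that no mass leaks to infinity, and this is exactly where the trapping hypothesis enters: by Lemma~\ref{lem:comp} the embedding $\mathcal{H}\hookrightarrow l^p(\mathbb{Z}^d)$ is compact for every $p\in[2,\infty]$, so $u_n\to u$ strongly in $l^2(\mathbb{Z}^d)$ and in $l^{q+1}(\mathbb{Z}^d)$. Strong $l^2$-convergence forces $\|u\|_2^2=\lim_n\|u_n\|_2^2=a$, so $u\in S_a$ and in particular $\Phi(u)\ge E_a$. Strong $l^{q+1}$-convergence, together with $\|u_n-u\|_2\to0$, lets me pass to the limit in the nonlinear term: writing $F(x,u_n)=\big(F(x,u_n)-F(x,u)-F(x,u_n-u)\big)+F(x,u)+F(x,u_n-u)$ and invoking Lemma~\ref{FBreLieb} for the bracket and Lemma~\ref{Fvanish} (applied to $u_n-u\to0$ in $l^2$) for the last term, I obtain $\int_{\mathbb{Z}^d}F(x,u_n)\,dx\to\int_{\mathbb{Z}^d}F(x,u)\,dx$ (alternatively this is a direct Hölder estimate on $\int|f(x,\xi_n)|\,|u_n-u|\,dx$ using the bound $|F(x,s)|\le\epsilon|s|^2+C_\epsilon|s|^{q+1}$).

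It remains to treat the quadratic part by weak lower semicontinuity. For the Dirichlet part, $\|\nabla u\|_2^2\le\liminf_n\|\nabla u_n\|_2^2$ follows from Corollary~\ref{NormBreLieb}(ii) since $\|\nabla(u_n-u)\|_2^2\ge0$ (or directly from Fatou applied to $\Gamma(u_n)(x)\to\Gamma(u)(x)$). For the potential part, $V$ is only bounded below, not above, so ordinary continuity is unavailable; instead I write $V(x)=(V(x)-\inf V)+\inf V$, note $V-\inf V\ge0$, apply Fatou's lemma to $\int_{\mathbb{Z}^d}(V(x)-\inf V)u_n^2\,dx$ using pointwise convergence, and observe $\int_{\mathbb{Z}^d}\inf V\cdot u_n^2\,dx=\inf V\cdot a$ is constant; this yields $\int_{\mathbb{Z}^d}V(x)u^2\,dx\le\liminf_n\int_{\mathbb{Z}^d}V(x)u_n^2\,dx$. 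Combining the three facts,
\[
\Phi(u)=\tfrac12\|\nabla u\|_2^2+\tfrac12\int_{\mathbb{Z}^d}V(x)u^2\,dx-\int_{\mathbb{Z}^d}F(x,u)\,dx\le\liminf_n\Phi(u_n)=E_a,
\]
so with $\Phi(u)\ge E_a$ we conclude $\Phi(u)=E_a$, and $u$ is the desired ground state normalized solution (with the associated Lagrange multiplier $\lambda_a$). The only genuinely delicate point is the lower semicontinuity of the unbounded potential term, which the splitting $V=(V-\inf V)+\inf V$ together with Fatou handles; everything else is routine once the compact embedding is in hand.
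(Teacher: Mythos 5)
Your proposal is correct and follows essentially the same route as the paper: bounded minimizing sequence, compact embedding $\mathcal{H}\hookrightarrow l^p$ to recover $\|u\|_2^2=a$ and pass to the limit in the nonlinear term via Lemmas~\ref{Fvanish} and~\ref{FBreLieb}, then lower semicontinuity of the quadratic part. Your explicit splitting $V=(V-\inf V)+\inf V$ with Fatou is just an unpacked version of the paper's appeal to weak lower semicontinuity of $\|\cdot\|_{\mathcal{H}}$, whose definition already contains the shift by $\inf V$.
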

\begin{proof}
    Take a minimizing sequence $\{u_n\}\subset S_a.$ By Lemma \ref{boundness}, $\{u_n\}$ is a bounded sequence in $\mathcal{H}$. Taking a subsequence if necessary we assume $u_n\rightharpoonup u$ in $\mathcal{H}$, hence $u_n\rightarrow u$ in $l^{p}(\mathbb{Z}^d)$ for $p\in[2,\infty]$ by Lemma \ref{lem:comp}. Combining this with Lemma \ref{Fvanish}, we have $$\lim\limits_{n\rightarrow\infty}\int_{\mathbb{Z}^d}F(x,u-u_n)d\mu=0.$$ Hence by Lemma \ref{FBreLieb} we obtain that $$\lim\limits_{n\rightarrow\infty} \int_{\mathbb{Z}^d}F(x,u_n)d\mu=\int_{\mathbb{Z}^d}F(x,u)d\mu.$$ Also since $$\Vert\nabla (u_n-u)\Vert_2^2\leq C\Vert u_n-u\Vert_2^2$$ and $\lim\limits_{n\rightarrow\infty}\Vert u_n-u\Vert_2^2=0$, by Corollary \ref{NormBreLieb}, we have that $\Vert u\Vert_2^2=a$ and $$\lim\limits_{n\rightarrow\infty}\int_{\mathbb{Z}^d}\vert\nabla u_n\vert^2dx=\int_{\mathbb{Z}^d}\vert\nabla u\vert^2dx.$$ By weak lower semi-continuity of the norm $\Vert\cdot\Vert_{\mathcal{H}}$, we have $$E_a\leq \Phi(u)\leq \lim\limits_{n\rightarrow\infty}\Phi(u_n)=E_a,$$ therefore $E_a$ is attained by $u$.
\end{proof}
\section{Well potential case: $V_{\infty}\textless\infty$}\label{sec:4}
In this section, we are concerned with $V_{\infty}\textless\infty,$ and prove main results. Without loss of generality, we assume $V_{\infty}=0,$ otherwise we replace $(V(x),\lambda)$ by $(V(x)-V_{\infty},\lambda+V_{\infty})$. Since $V(x)$ is bounded, the norm $\Vert u\Vert_{\mathcal{H}}$ is equivalent to $\Vert u\Vert_2$ and $\mathcal{H}$ coincides with $l^2(\mathbb{Z}^d)$. First, we need the following helpful lemma.
\begin{lemma}\label{Vvanish}
    Let $\{u_n\}$ be a bounded sequence in $l^2(\mathbb{Z}^d)$ such that
 for all $x\in\mathbb{Z}^d$, $\lim\limits_{n\rightarrow\infty}u_n(x)=0$. Suppose $V(x)$ satisfies condition $(V_0)$ with $V_{\infty}=0$, then we have
    \[
    \lim\limits_{n\rightarrow\infty}\int_{\mathbb{Z}^d}V(x)u_n^2dx=0.
    \]
\end{lemma}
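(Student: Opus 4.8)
The plan is to exploit the two defining features of the hypothesis on $V$ when $V_{\infty}=0$: condition $(V_0)$ forces $\lim_{|x|\to\infty}V(x)=0$, so $V(x)$ is uniformly small outside a large ball, while inside a fixed ball the vertex set is \emph{finite}, so the pointwise convergence $u_n(x)\to 0$ may be summed termwise. Concretely, fix $\epsilon>0$. Since $\lim_{|x|\to\infty}V(x)=V_{\infty}=0$, there is $R>0$ with $|V(x)|\le\epsilon$ for every $x\in B_R^c$. Let $M:=\sup_n\Vert u_n\Vert_2^2<\infty$, which is finite by hypothesis.

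Next I would split the sum over $B_R$ and its complement and estimate
\[
\Big|\int_{\mathbb{Z}^d}V(x)u_n^2\,dx\Big|\le\Big|\int_{B_R}V(x)u_n^2\,dx\Big|+\Big|\int_{B_R^c}V(x)u_n^2\,dx\Big|\le\sum_{x\in B_R}|V(x)|u_n(x)^2+\epsilon M.
\]
The first term on the right is a finite sum (over the finitely many $x\in B_R$) of the quantities $|V(x)|u_n(x)^2$, each of which tends to $0$ as $n\to\infty$ because $u_n(x)\to 0$; hence $\sum_{x\in B_R}|V(x)|u_n(x)^2\to 0$. Taking $\varlimsup_{n\to\infty}$ yields $\varlimsup_{n\to\infty}\big|\int_{\mathbb{Z}^d}V(x)u_n^2\,dx\big|\le\epsilon M$, and since $\epsilon>0$ is arbitrary the limit is $0$.

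There is essentially no genuine obstacle here: this is the same truncation-plus-pointwise-convergence scheme already used in the proofs of Lemma~\ref{Fvanish} and Lemma~\ref{PhiBreLieb}. The only points requiring a little care are (i) using $(V_0)$ to extract not merely $V\le 0$ but the honest decay $|V(x)|\to 0$, which is what makes the tail term small, and (ii) that boundedness of $\{u_n\}$ in $l^2(\mathbb{Z}^d)$ — not just pointwise convergence — is what lets us control $\int_{B_R^c}u_n^2\,dx\le M$ uniformly in $n$. No integrability of $V$ itself is needed, and the global boundedness of $V$ available in the well-potential case is not actually used in this argument.
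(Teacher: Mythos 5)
Your proof is correct and follows essentially the same route as the paper: split the integral at a ball $B_R$ chosen so that $|V|\le\epsilon$ outside, bound the tail by $\epsilon\sup_n\Vert u_n\Vert_2^2$, and let the finite sum over $B_R$ vanish by pointwise convergence. The only difference is that you spell out explicitly why the finite sum tends to zero and identify the constant as $M=\sup_n\Vert u_n\Vert_2^2$, which the paper leaves implicit.
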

\begin{proof}
    Let $\epsilon\textgreater0.$ For sufficiently large $R$  such that $\vert V(x)\vert\leq \epsilon$ for all $x\in B_{R}^c$,  we have
    \begin{align*}
    \vert\int_{\mathbb{Z}^d}V(x)u_n^2dx\vert&\leq \vert\int_{B_R}V(x)u_n^2dx\vert+\vert\int_{B_R^c}V(x)u_n^2dx\vert \\
    &\leq \vert\int_{B_R}V(x)u_n^2dx\vert+C\epsilon.
    \end{align*}
    Since for all $x\in B_R$, we have $u_n(x)\rightarrow0$. Hence letting $n\rightarrow\infty$ we have
    \[
    \varlimsup\limits_{n\rightarrow\infty}\vert\int_{\mathbb{Z}^d}V(x)u_n^2dx\vert\leq C\epsilon.
    \]
    Since $\epsilon$ is arbitrary, we prove the desired result.
\end{proof}
To find the ground state normalized solution, we need to establish various properties for the function $a\mapsto E_a$.
\begin{lemma}\label{funproper}
     (1) For all $a\textgreater0$ we have $E_a\leq0$, and there exists $a\textgreater 0$ such that $E_a\textless0$.\\
     (2) $E_{\theta a}\leq\theta E_{a}$ for any $\theta\textgreater1$.\\
     (3) If further $E_a$ is attained, then $E_{\theta a}\textless\theta E_{a}$ for $\theta\textgreater1$.\\
     (4) $E_{a+b}\leq E_a+E_b$ for $a,b\textgreater0$.\\
     (5) If $E_a$ or $E_b$ is attained, then $E_{a+b}\textless E_a+E_b$.\\
     (6) The function $a\mapsto E_a$ is nonincreasing and continuous for $a\textgreater0$.
\end{lemma}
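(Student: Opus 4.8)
The plan is to establish (1)--(3) directly from the hypotheses and then deduce (4)--(6) from them.

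\emph{Part (1).} For fixed $a>0$ I would use \emph{spreading} test functions: for an integer $L$ let $w_L$ equal $h_L:=\sqrt{a/L^d}$ on the cube $Q_L=\{1,\dots,L\}^d$ and $0$ elsewhere, so $w_L\in S_a$. Only the boundary edges of $Q_L$ carry gradient, so $\int_{\mathbb{Z}^d}|\nabla w_L|^2\,dx=O(L^{d-1}h_L^2)=O(a/L)$; since $V\le V_\infty=0$ one has $\int_{\mathbb{Z}^d}Vw_L^2\,dx\le 0$; and the bound $|F(x,s)|\le\epsilon|s|^2+C_\epsilon|s|^{q+1}$ from $(f0)$--$(f1)$ gives $|\int_{\mathbb{Z}^d}F(x,w_L)\,dx|\le\epsilon a+C_\epsilon a^{(q+1)/2}L^{d(1-q)/2}$, which tends to $\epsilon a$ along $L\to\infty$ (this is where $q>1$ enters). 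Hence $E_a\le\Phi(w_L)\le\epsilon a+o(1)$, and since $\epsilon$ is arbitrary $E_a\le 0$. For the strict statement I would instead take the plateau of height $\xi$ (the constant from $(f3)$) on $Q_L$: then $\int|\nabla w_L|^2=O(L^{d-1})$, while $\int_{\mathbb{Z}^d}F(x,w_L)\,dx=\sum_{x\in Q_L}F(x,\xi)\ge L^d\tilde F(\xi)$ because $f(x,\cdot)\ge\tilde f$ forces $F(x,\xi)\ge\tilde F(\xi)>0$; so $\Phi(w_L)\le CL^{d-1}-\tilde F(\xi)L^d<0$ for $L$ large, i.e. $E_a<0$ with $a=\xi^2L^d$.

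\emph{Parts (2)--(3).} Given $u\in S_a$ and $\theta>1$, put $v:=\sqrt\theta\,u\in S_{\theta a}$; then $\|\nabla v\|_2^2=\theta\|\nabla u\|_2^2$ and $\int_{\mathbb{Z}^d}Vv^2=\theta\int_{\mathbb{Z}^d}Vu^2$, while $(f4)$ gives $F(x,\sqrt\theta\,u(x))>\theta F(x,u(x))$ at every $x$ with $u(x)\ne 0$ (and $0=0$ elsewhere). Since $u\in l^2\subset l^{q+1}$ (Lemma~\ref{normcontrol}) both series $\sum_x F(x,u(x))$ and $\sum_x F(x,\sqrt\theta\,u(x))$ converge absolutely, and $\{u\ne 0\}$ is nonempty, so $\int_{\mathbb{Z}^d}F(x,\sqrt\theta\,u)\,dx>\theta\int_{\mathbb{Z}^d}F(x,u)\,dx$ and hence $\Phi(v)<\theta\Phi(u)$. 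Choosing $u$ to be an $\epsilon$-minimizer for $E_a$ gives $E_{\theta a}\le\Phi(v)<\theta(E_a+\epsilon)$, and $\epsilon\downarrow 0$ proves (2); choosing $u$ to be a minimizer when $E_a$ is attained gives $E_{\theta a}\le\Phi(v)<\theta E_a$, which is (3).

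\emph{Parts (4)--(6).} Parts (4) and (5) are then purely algebraic: with $\theta=(a+b)/a>1$, (2) reads $\tfrac{a}{a+b}E_{a+b}\le E_a$ and symmetrically $\tfrac{b}{a+b}E_{a+b}\le E_b$, so adding gives $E_{a+b}\le E_a+E_b$; if $E_a$ is attained, (3) upgrades the first inequality to $\tfrac{a}{a+b}E_{a+b}<E_a$, whence $E_{a+b}<E_a+E_b$ (symmetrically if $E_b$ is attained). For (6), monotonicity is immediate from (4) and (1): for $0<a<b$, $E_b=E_{a+(b-a)}\le E_a+E_{b-a}\le E_a$. For continuity, since $E$ is monotone it has one-sided limits everywhere, and it suffices to match them with $E_a$ by rescaling near-minimizers. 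If $b_n\downarrow a$, choose $u_n\in S_{b_n}$ with $\Phi(u_n)\le E_{b_n}+1/n$ (so $\Phi(u_n)\to\lim E_{b_n}$) and set $\tilde u_n:=\sqrt{a/b_n}\,u_n\in S_a$; since $\|u_n\|_2^2=b_n$ the sequence is bounded in $l^2$, $\|\tilde u_n-u_n\|_2\to 0$, and by the mean value theorem together with $|f(x,s)|\le C(|s|+|s|^q)$ and H\"older's inequality each of the three terms of $\Phi$ changes by $o(1)$, so $\Phi(\tilde u_n)=\Phi(u_n)+o(1)$ and $E_a\le\lim E_{b_n}$; with monotonicity this gives right-continuity. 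If $b_n\uparrow a$, instead fix an $\epsilon$-minimizer $u\in S_a$ for $E_a$, note $\sqrt{b_n/a}\,u\in S_{b_n}$, and use continuity of $\Phi$ on $l^2$ to get $\limsup E_{b_n}\le\Phi(u)\le E_a+\epsilon$, i.e. left-continuity.

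\emph{Main obstacle.} The genuinely new ingredient is $(f4)$: it is precisely what makes the scaling inequalities (2)--(3) work, and through them all of (4)--(6) follow with little effort. The most technically delicate step I anticipate is the continuity in (6): there is no exact dilation available on the lattice, so one must run the rescaling-of-near-minimizers argument and check that the $l^2$-small perturbation $\tilde u_n-u_n$ controls all three terms of $\Phi$ uniformly in $n$, the nonlinear term via the mean value theorem and the growth bound from $(f1)$.
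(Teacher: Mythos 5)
Your proof is correct and follows essentially the same route as the paper's: spreading test functions for (1) (the paper uses a tent profile $c_{a,n,d}(n-|x|)/n^{d/2+1}$ where you use a flat plateau, but both play the same role of making the gradient, the potential term and the nonlinear term vanish in the limit), the scaling $u\mapsto\sqrt{\theta}\,u$ combined with $(f4)$ for (2)--(3), the purely algebraic deduction of (4)--(5) from (2)--(3), and the rescaling of near-minimizers controlled via the mean value theorem and the growth bound from $(f1)$ for the continuity in (6). I see no gaps.
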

\begin{proof}
    (1) Let
    
    \[
    u_{n}(x)=\left\{
    \begin{aligned}
       & c_{a,n,d}\frac{n-\vert x\vert}{n^{\frac{d}{2}+1}}, \vert x\vert\leq n\\
       & 0, \mathrm{otherwise}
    \end{aligned}
    \right.
    \]
    where $c_{a,n,d}$ is a normalized constant such that $\Vert u_n\Vert_2^2=a.$ Then we have $\Vert\nabla u_n\Vert_2^2\leq Cn^{-2}$ and $\Vert u_n\Vert_{q+1}^{q+1}=o(1)$; see also  \citep[Lemma 3]{MR4608774}. Hence by Lemma \ref{Fvanish} we have $\lim\limits_{n\rightarrow\infty}\int_{\mathbb{Z}^d} F(x,u_n)dx=0$. Also, since $$\lim\limits_{n\rightarrow\infty}\Vert u_n\Vert_{\infty}\leq\lim\limits_{n\rightarrow\infty}\Vert u_n\Vert_{q+1}=0,$$ by Lemma \ref{Vvanish} we have $\lim\limits_{n\rightarrow\infty}\int_{\mathbb{Z}^d}V(x)u_n^2dx=0$. Hence, $\lim\limits_{n\rightarrow\infty}\Phi(u_n)=0$, which implies $E_a\leq0$. Moreover, we consider the function 
    \[
    u_{R}(x)=\left\{
    \begin{aligned}
       & \xi, \vert x\vert_{\infty}\leq R\\
       & 0, \mathrm{otherwise}
    \end{aligned}
    \right.
    \] 
    where $\vert x\vert_{\infty}=\max\limits_{i}\vert x_i\vert$ for $x=(x_1,...,x_d)$ and $R \in\mathbb{N}$. Then 
    \begin{align*}
    \Phi(u_R)&=\frac{1}{2}\int_{\mathbb{Z}^d}\vert\nabla u_R\vert^2+V(x)u_R^2dx-\int_{\mathbb{Z}^d}F(x,u_R)dx\\
        &\leq\frac{1}{2}\int_{\mathbb{Z}^d}\vert\nabla u_R\vert^2dx-\int_{\mathbb{Z}^d}\tilde{F}(u_R)dx\\
        &\leq d\xi^2 (2R+1)^{d-1}-\tilde{F}(\xi)(2R+1)^d.
    \end{align*}
    Since $\tilde{F}(\xi)\textgreater0$, for sufficiently large $R$ we have $\Phi(u_R)\textless0$ and hence for $a = \Vert u_R\Vert_2^2$, we have $E_a\textless 0$. \\
    
    (2) By condition $(f4)$, for any $\theta\textgreater1$ and $u\in l^2(\mathbb{Z}^d)$ such that $\Vert u\Vert_2^2=a,\Phi(u)\leq E_{a}+\epsilon$, we directly compute that
    \[
    \begin{aligned}
    \Phi(\sqrt{\theta} u)&=\frac{\theta}{2}\int_{\mathbb{Z}^{d}} |\nabla u|^{2}+V(x)u^{2} dx
-\int_{\mathbb{Z}^{d}}F(x,\sqrt{\theta} u) dx\\
&\textless \frac{\theta}{2}\int_{\mathbb{Z}^{d}} |\nabla u|^{2}+V(x)u^{2} dx
-\theta\int_{\mathbb{Z}^{d}}F(x,u) dx\\
&=\theta\Phi(u)\\
&\leq \theta E_a+\theta\epsilon.
\end{aligned}
    \]
    Since $\Phi(\sqrt{\theta} u )\geq E_{\theta a}$, letting $\epsilon\rightarrow0$ we have 
    \[
    E_{\theta a}\leq \theta E_{a}.
    \]
    
    (3) If $E_a$ is attained, then in the proof of (2), we take $\Phi(u)=E_a$ directly and get $E_{\theta a}\textless \theta  E_{a}$.\\
    
    (4) Without loss of generality, we assume $a\textgreater b$, then by (2) we have
    \[
    E_{a+b}=E_{(1+\frac{b}{a})a}\leq(1+\frac{b}{a})E_a=E_a+\frac{b}{a}E_{\frac{a}{b}b}\leq E_a+E_b.
    \]
    
    (5) This follows directly from (3) and (4).\\
    
    (6) Assume $b\textgreater a\textgreater0$, since $E_a\leq0$ for $a\textgreater0$, from (2) we have
    \[
    E_{b}\leq\frac{b}{a}E_{a}\leq E_a.
    \]
    Therefore function $a\mapsto E_{a}$ is non-increasing. For continuity, suppose $a_n\rightarrow a$, take $u_n\in S_{a_n}$ such that $\Phi(u_n)\leq E_{a_n}+\frac{1}{n}$, then we have
    \[
    \begin{aligned}
        &\vert\int_{\mathbb{Z}^d}F(x,\sqrt{\frac{a}{a_n}}u_n)dx-\int_{\mathbb{Z}^d}F(x,u_n)dx\vert\\
        &=\vert\int_{\mathbb{Z}^d}\int_{0}^1f(x,u_n+(\sqrt{\frac{a}{a_n}}-1)\theta u_n)(\sqrt{\frac{a}{a_n}}-1)u_nd\theta dx\vert\\
        &\leq C\vert\sqrt{\frac{a}{a_n}}-1\vert\int_{\mathbb{Z}^d}\vert u_n\vert^2+\vert u_n\vert^{q+1}dx\\
        &\leq C\vert\sqrt{\frac{a}{a_n}}-1\vert(a+a^{\frac{q+1}{2}}).
    \end{aligned}
    \]
    Therefore
    \[
    \int_{\mathbb{Z}^d}F(x,\sqrt{\frac{a}{a_n}}u_n)dx=\int_{\mathbb{Z}^d}F(x,u_n)dx+o(1).
    \]
    Moreover,
    \begin{align*}
    &\frac{a}{2a_n}\int_{\mathbb{Z}^d}\vert\nabla u_n\vert^2+V(x)u_n^2dx\\
    &=\frac{1}{2}\int_{\mathbb{Z}^d}\vert\nabla u_n\vert^2+V(x)u_n^2dx+\frac{a-a_n}{2a_n}\int_{\mathbb{Z}^d}\vert\nabla u_n\vert^2+V(x)u_n^2dx\\
    &\leq\frac{1}{2}\int_{\mathbb{Z}^d}\vert\nabla u_n\vert^2+V(x)u_n^2dx+ C\vert\frac{a-a_n}{2a_n}\vert\int_{\mathbb{Z}^d}\vert u_n\vert^2dx\\
    &=\frac{1}{2}\int_{\mathbb{Z}^d}\vert\nabla u_n\vert^2+V(x)u_n^2dx+o(1).
    \end{align*}
    Therefore, we have
    \[
    \begin{aligned}
        E_{a}&\leq\Phi(\sqrt{\frac{a}{a_n}}u_n)\\
        &=\frac{a}{2a_n}\int_{\mathbb{Z}^d}\vert\nabla u_n\vert^2+V(x)u_n^2dx-\int_{\mathbb{Z}^d}F(x,\sqrt{\frac{a}{a_n}}u_n)dx\\
        &=\Phi(u_n)+o(1)\\
        &\leq E_{a_n}+\frac{1}{n}+o(1).
    \end{aligned}
    \]
    Similarly taking a minimizing sequence $\{v_n\}\subset S_a$, we have
    \[
    E_{a_n}\leq\Phi(\sqrt{\frac{a_n}{a}}v_n)=\Phi(v_n)+o(1)=E_a+o(1).
    \]
    In conclusion, $\vert E_a-E_{a_n}\vert=o(1)$ and hence the function $a\mapsto E_a$ is continuous.
\end{proof}

\iffalse
\begin{lemma}
    Let $v_k$ be a sequence of nonnegative elements of $l^1(\mathbb{Z}^d)$ such that $\Vert v\Vert_{l^1}$ is fixed, then there exists a subsequence, still denoted by $v_k$, such that one of the following three possibilities holds:\\
    (a) (compactness) there exist $m_k\in\mathbb{Z}^d$ such that $v_k(m_k+\cdot)$ is tight, i.e.for every $\epsilon\textgreater0$ there is $r\textless\infty$ such that $\sum_{n\in m_k+K_r}v_k(n)\geq\lambda-\epsilon$\\
    (b) (vanishing) $\lim\Vert v_n\Vert_{l^\infty}=0$\\
    (c) (dichotomy) there exists $\alpha\in(0,\lambda)$ with the following property:$\forall \epsilon\textgreater0$, there exist $v_{k,1},v_{k,2}\in C_0(\mathbb{Z}^d)$ such that
    \[
    \begin{aligned}
        &\Vert v-(v_{k,1}+v_{k,2})\Vert_{l^1}\leq \epsilon\\
        &\vert\Vert v_{k,1}\Vert_{l^1}-\alpha\vert\leq\epsilon\\
        &\vert\Vert v_{k,2}\Vert_{l^1}-(\lambda-\alpha)\vert\leq\epsilon
    \end{aligned}
    \]
    for k large enough and $dist(spt(v_{k,1}),spt(v_{k,2}))\rightarrow\infty$ as $k\rightarrow\infty$
\end{lemma}
\fi

Before the proof of Theorem \ref{thm1} for discrete nonlinear Schr{\"o}dinger equation with well potential and non-autonomous nonlinearities, we consider the limit functional
\[
\Phi^{\infty}(u)= \frac{1}{2}\int_{\mathbb{Z}^{d}} |\nabla u|^{2}\, dx
-\int_{\mathbb{Z}^{d}}\tilde{F}(u) \,dx.
\]
Limit functional is a special case of (5) such that $V(x)=0$ and $F(x,u)=\tilde{F}(u)$.  Set $E^{\infty}_a :=\inf\limits_{u\in S_a}\Phi^{\infty}(u)$. Then we try to find a threshold for $E_a^{\infty}$ before proving the main results for $E_a$. For our purposes, we need the following lemma.
\begin{lemma}\label{limitcomp}
   Suppose $\{u_n\}$ is a minimizing sequence of $E^{\infty}_a$ for $\Phi^{\infty}$, then one of the following holds:\\
    (i) $\lim\limits_{n\rightarrow\infty}{\Vert u_n(x)\Vert_{\infty}}=0.$\\
    (ii) There exists a family $\{y_n\}\subset\mathbb{Z}^d$ and a global minimizer $u$ such that if taking a subsequence necessarily, we have $
    u_n(\cdot-y_n)\rightarrow u$ in $l^2(\mathbb{Z}^d).$
\end{lemma}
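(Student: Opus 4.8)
The plan is to run a translation-invariant concentration--compactness dichotomy for $\Phi^{\infty}$, exploiting that $V\equiv 0$ makes $\Phi^{\infty}$ invariant under the shifts $u\mapsto u(\cdot-y)$, $y\in\mathbb{Z}^d$. As a preliminary I would observe that $V\equiv 0$ satisfies $(V_0)$ with $V_{\infty}=0$, so Lemma~\ref{boundness} applies and the minimizing sequence $\{u_n\}$ is bounded in $\mathcal H=l^2(\mathbb{Z}^d)$; moreover the analogues of Lemma~\ref{funproper} hold verbatim for $a\mapsto E^{\infty}_a$ (it is the special case $V\equiv 0$, $F\equiv\tilde F$ of the general functional), so in particular $a\mapsto E^{\infty}_a$ is continuous and nonincreasing, is subadditive ($E^{\infty}_{a+b}\le E^{\infty}_a+E^{\infty}_b$), and is \emph{strictly} subadditive ($E^{\infty}_{a+b}<E^{\infty}_a+E^{\infty}_b$) as soon as $E^{\infty}_a$ or $E^{\infty}_b$ is attained, the latter being a consequence of $(f4)$. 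Note also that $\tilde f$ inherits $(f0)$ and $(f1)$ from $f$ via $(f2)$, so Lemmas~\ref{vanish},~\ref{Fvanish},~\ref{FBreLieb} all apply with the autonomous nonlinearity $\tilde F$.

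If $\|u_n\|_{\infty}\to 0$ we are in alternative (i), so assume not; passing to a subsequence there are $\delta>0$ and $y_n\in\mathbb{Z}^d$ with $|u_n(y_n)|\ge \delta/2$. Set $\tilde u_n:=u_n(\cdot-y_n)$. Translation invariance gives $\Phi^{\infty}(\tilde u_n)=\Phi^{\infty}(u_n)\to E^{\infty}_a$ and $\|\tilde u_n\|_2^2=a$, so $\{\tilde u_n\}$ is again a bounded minimizing sequence, with $|\tilde u_n(0)|\ge\delta/2$. Extract $\tilde u_n\rightharpoonup u$ in $l^2(\mathbb{Z}^d)$; weak convergence forces pointwise convergence, hence $|u(0)|\ge\delta/2$, so $u\ne 0$, and by weak lower semicontinuity $b:=\|u\|_2^2\in(0,a]$.

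Next I split the energy. Corollary~\ref{NormBreLieb} applied to $\tilde u_n$ gives $\|\tilde u_n-u\|_2^2\to a-b$ and $\|\nabla\tilde u_n\|_2^2-\|\nabla(\tilde u_n-u)\|_2^2\to\|\nabla u\|_2^2$, while Lemma~\ref{FBreLieb} (for $\tilde F$) gives $\int_{\mathbb{Z}^d}\tilde F(\tilde u_n)-\tilde F(u)-\tilde F(\tilde u_n-u)\,dx\to 0$. Adding these, $\Phi^{\infty}(\tilde u_n)-\Phi^{\infty}(\tilde u_n-u)\to\Phi^{\infty}(u)$, hence $\Phi^{\infty}(\tilde u_n-u)\to E^{\infty}_a-\Phi^{\infty}(u)$. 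If $b=a$ then $\|\tilde u_n-u\|_2\to 0$, and since $\|\nabla(\tilde u_n-u)\|_2\le C\|\tilde u_n-u\|_2$ and $\int_{\mathbb{Z}^d}\tilde F(\tilde u_n-u)\,dx\to 0$ by Lemma~\ref{Fvanish}, we get $\Phi^{\infty}(\tilde u_n-u)\to 0$, so $\Phi^{\infty}(u)=E^{\infty}_a$; as $\|u\|_2^2=a$, $u$ is a global minimizer and $\tilde u_n\to u$ in $l^2(\mathbb{Z}^d)$, i.e. alternative (ii) with the $y_n$ above.

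It remains to exclude $0<b<a$. Then $a-b>0$, $\Phi^{\infty}(u)\ge E^{\infty}_b$, and since $\|\tilde u_n-u\|_2^2\to a-b$ and $a\mapsto E^{\infty}_a$ is continuous, $\liminf_n\Phi^{\infty}(\tilde u_n-u)\ge E^{\infty}_{a-b}$; hence $E^{\infty}_a=\Phi^{\infty}(u)+\lim_n\Phi^{\infty}(\tilde u_n-u)\ge E^{\infty}_b+E^{\infty}_{a-b}$. Together with subadditivity $E^{\infty}_a\le E^{\infty}_b+E^{\infty}_{a-b}$ this forces equality throughout, so $\Phi^{\infty}(u)=E^{\infty}_b$ and $E^{\infty}_b$ is attained; but then strict subadditivity yields $E^{\infty}_a<E^{\infty}_b+E^{\infty}_{a-b}$, a contradiction. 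Hence $b=a$ and one of (i), (ii) always holds. I expect the crux of the argument to be precisely this exclusion of the strict-dichotomy case: it rests entirely on the strict subadditivity of $a\mapsto E^{\infty}_a$, whose proof (the $E^{\infty}$-analogue of Lemma~\ref{funproper}(5), and behind it the scaling behaviour of $\tilde F$ inherited from $(f4)$) carries the real weight, whereas the Brezis--Lieb bookkeeping is routine.
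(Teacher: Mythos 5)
Your proof is correct, and it shares the paper's overall concentration--compactness skeleton: translate by points where $|u_n|$ stays bounded away from zero, extract a weak limit $u\neq 0$, split the energy with Corollary \ref{NormBreLieb} and Lemma \ref{FBreLieb}, and rule out dichotomy via the strict subadditivity of $c\mapsto E^{\infty}_c$. Where you genuinely diverge is in how the case $0<b<a$ is excluded. The paper proceeds in two stages: it first proves the claim $\Vert v_n\Vert_{\infty}\to 0$ for the remainder $v_n=u_n(\cdot-y_n)-u$ by extracting a \emph{second} profile $v$ from $v_n$, obtaining a three-term splitting $\Phi^{\infty}(u_n)=\Phi^{\infty}(u)+\Phi^{\infty}(v)+\Phi^{\infty}(\omega_n)+o(1)$ and contradicting Lemma \ref{funproper}(5); it then separately upgrades $\Vert v_n\Vert_{\infty}\to 0$ to $\Vert v_n\Vert_{2}\to 0$ via Lemmas \ref{vanish} and \ref{Fvanish} and a further strict-monotonicity contradiction. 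You bypass both stages by bounding the remainder's energy directly: since $\Vert \tilde u_n-u\Vert_2^2\to a-b>0$ and $c\mapsto E^{\infty}_c$ is continuous (Lemma \ref{funproper}(6) applied to the limit functional), you get $\liminf_n\Phi^{\infty}(\tilde u_n-u)\geq E^{\infty}_{a-b}$, hence $E^{\infty}_a\geq E^{\infty}_b+E^{\infty}_{a-b}$, which together with subadditivity forces $E^{\infty}_b$ to be attained by $u$ and then contradicts strict subadditivity in one step. This is a cleaner route that avoids the second bubble entirely; its only extra input, continuity of $c\mapsto E^{\infty}_c$, is already available and is in fact used implicitly by the paper when it rescales $\omega_n$ to $\tilde\omega_n$. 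Both arguments share the same tacit assumption that the strict inequality in $(f4)$ (and hence Lemma \ref{funproper}(3),(5)) survives passage to the limit nonlinearity $\tilde F$, so no additional gap is introduced on your side.
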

\begin{proof}
    Suppose (i) is not true, then taking a subsequence if necessary, we have
    \[
    0\textless\lim_{n\rightarrow\infty}\sup_{x\in\mathbb{Z}^d}\vert u_n(x) \vert.
    \]
Suppose $\sup_{x\in\mathbb{Z}^d}\vert u_n(x) \vert=\vert u_n(y) \vert$, then $u_n(\cdot-y_n)$ is bounded in $l^2(\mathbb{Z}^d)$ by Lemma \ref{boundness}, therefore there exists $u\in l^2(\mathbb{Z}^d)$ such that $u_n(\cdot-y_n)\rightharpoonup u$ in $l^2(\mathbb{Z}^d)$. Set $v_n=u_n(\cdot-y_n)-u$, then by Lemma \ref{PhiBreLieb} we have 
    \[
    \Phi^{\infty}(u_n)-\Phi^{\infty}(u)-\Phi^{\infty}(v_n)=o(1).
    \]
    
    We claim that $\lim\limits_{n\rightarrow\infty}\Vert v_n\Vert_{\infty}=0$. Suppose it is not true, then taking a subsequence if necessary, we find $\{z_n\}\subset\mathbb{Z}^d$ and $v\in l^2(\mathbb{Z}^d)$ such that $v_n(\cdot-z_n)\rightharpoonup v$ in $l^2(\mathbb{Z}^d)$. 
    Set $\omega_n= v_n(\cdot-z_n)-v$, then by Lemma \ref{PhiBreLieb} again we have
    \[
    \Phi^{\infty}(v_n)=\Phi^{\infty}(v)+\Phi^{\infty}(\omega_n)+o(1).
    \]
    Therefore we have 
    \[
    \Phi^{\infty}(u_n)=\Phi^{\infty}(u)+\Phi^{\infty}(v)+\Phi^{\infty}(\omega_n)+o(1).
    \]
    Set $\beta = \Vert u\Vert_{2}^2$, $\gamma=\Vert v\Vert_{2}^2$, $\delta = a-\beta-\gamma$, then by Corollary \ref{NormBreLieb}, $\Vert\omega_n\Vert_2^2 = \delta+o(1)$.\\
    
    If $\delta\textgreater0$, then we set $\tilde{\omega}_n=\frac{\sqrt{\delta}}{\Vert \omega_n\Vert_{2}}\omega_n$. Similar to the proof of (6) in Lemma \ref{funproper}, we have $\Phi^{\infty}(\omega_n)=\Phi^{\infty}(\tilde{\omega}_n)+o(1)$. Therefore, by (4) in Lemma \ref{funproper}, we have 
    \[
    \begin{aligned}
    \Phi^{\infty}(u_n)&=\Phi^{\infty}(u)+\Phi^{\infty}(v)+\Phi^{\infty}(\omega_n)+o(1)\\
    &=\Phi^{\infty}(u)+\Phi^{\infty}(v)+\Phi^{\infty}(\tilde{\omega}_n)+o(1)\\
    &\geq \Phi^{\infty}(u)+\Phi^{\infty}(v)+E^{\infty}_{\delta}+o(1)\\
    &\geq E^{\infty}_{\beta}+E^{\infty}_{\gamma}+E^{\infty}_{\delta}+o(1)\\
    &\geq E^{\infty}_{a}+o(1).
    \end{aligned}
    \]
    Taking $n\rightarrow\infty$, we have all inequalities become equalities and $u,v$ are global minimizers of $E_{\beta}^{\infty}, E_{\gamma}^{\infty}$ respectively. But then by (5) in Lemma \ref{funproper}, $E^{\infty}_{\beta}+E^{\infty}_{\gamma}\textgreater E^{\infty}_{\beta+\gamma}$, which is a contradiction.\\
    
    If $\delta =0$, then $\lim\limits_{n\rightarrow\infty}\Vert\omega_n\Vert_{2}=0$, which implies $\lim\limits_{n\rightarrow\infty}\Phi^{\infty}(\omega_n)=0$ by Lemma \ref{Fvanish}. Therefore we have
    \[
    E^{\infty}_{a} = \Phi^{\infty}(u)+\Phi^{\infty}(v)\geq E^{\infty}_{\beta}+E^{\infty}_{\gamma}\geq E^{\infty}_a.
    \]
    Thus, $u,v$ are global minimizers of $E_{\beta}^{\infty}, E_{\gamma}^{\infty}$ respectively, but this is a contradiction to (5) in Lemma \ref{funproper}.\\
    
Hence, we prove the claim.  Therefore by Lemma \ref{vanish} we have $\lim\limits_{n\rightarrow\infty}\Vert v_n\Vert_{q+1}=0$, which implies $\lim\limits_{n\rightarrow\infty}\int_{\mathbb{Z}^d}\tilde{F}(v_n)dx=0$ by Lemma \ref{Fvanish}. Now we prove that $\lim\limits_{n\rightarrow\infty}\Vert v_n\Vert_{2}=0$. Otherwise, without loss of generality, taking a subsequence if necessary, we may assume $\lim\limits_{n\rightarrow\infty}\Vert v_n\Vert_{2}\textgreater0.$ Since 
    \[
    \Phi^{\infty}(u_n)=\Phi^{\infty}(u)+\Phi^{\infty}(v_n)+o(1)
    \]
    and 
    \[
    \varliminf_{n\rightarrow\infty}\Phi^{\infty}(v_n)\geq -\lim_{n\rightarrow\infty}\int_{\mathbb{Z}^d}\tilde{F}(v_n)dx=0,
    \]
    we have 
    \[
    E^{\infty}_{a}\geq \Phi^{\infty}(u)\geq E^{\infty}_{\beta}\geq E^{\infty}_{a}.
    \]
    Thus $u$ is a global minimizer of $E^{\infty}_{\beta}$, but then $E^{\infty}_{\beta}\textgreater E^{\infty}_{a}$, which is a contradiction. Hence, we prove the result $\lim\limits_{n\rightarrow\infty}\Vert v_n\Vert_2=0.$ By Corollary \ref{NormBreLieb}, we have $\Vert u\Vert_2^2=a$. Since $\lim\limits_{n\rightarrow\infty}\int_{\mathbb{Z}^d}\tilde{F}(v_n)dx=0$,  $\lim\limits_{n\rightarrow\infty}\Phi^{\infty}(v_n)=0$. By Lemma \ref{PhiBreLieb} we have $\Phi^{\infty}(u)=\lim\limits_{n\rightarrow\infty}\Phi^{\infty}(u_n)=E^{\infty}_a$.
\end{proof}
 Using Lemma \ref{limitcomp}, we are able to find a ground state normalized solution when $E_a^{\infty}\textless0$.
\begin{theorem}\label{limitmain}
    If $E_a^{\infty}\textless0$, then there exists $u\in S_a$ such that $\Phi^{\infty}(u)=E^{\infty}_a$. Moreover, let $\alpha^{\infty}=inf\{a:E^{\infty}_a\textless 0\}$, then if $a\textgreater\alpha^{\infty}$, $E_a^{\infty}$ is attained, if $0\textless a\textless\alpha^{\infty}$, $E_a^{\infty}$ can not be attained.
\end{theorem}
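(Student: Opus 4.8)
The plan is to reduce everything to two facts already at our disposal for the autonomous problem: the dichotomy of Lemma~\ref{limitcomp} for minimizing sequences of $\Phi^{\infty}$, and the elementary properties of the map $a\mapsto E_a^{\infty}$. Note first that $\Phi^{\infty}$ is the instance of our energy functional in which the potential vanishes and the nonlinearity is $\tilde f$; since $\tilde f$ again satisfies $(f0)$–$(f4)$, Lemma~\ref{funproper} applies with $E_a$ replaced by $E_a^{\infty}$. In particular $E_a^{\infty}\le 0$ for every $a>0$, there is some $a$ with $E_a^{\infty}<0$, and $a\mapsto E_a^{\infty}$ is nonincreasing. Hence $S:=\{a>0:E_a^{\infty}<0\}$ is upward closed and nonempty, so $\alpha^{\infty}=\inf S\in[0,\infty)$ is well defined and $(\alpha^{\infty},\infty)\subseteq S$.

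For the existence statement, fix $a$ with $E_a^{\infty}<0$ and take a minimizing sequence $\{u_n\}\subset S_a$ for $E_a^{\infty}$; I apply Lemma~\ref{limitcomp}. The key point is that its alternative (i), $\Vert u_n\Vert_{\infty}\to 0$, is incompatible with $E_a^{\infty}<0$: as $\{u_n\}$ is bounded in $l^2(\mathbb{Z}^d)$ and $q+1>2$, Lemma~\ref{vanish} gives $\Vert u_n\Vert_{q+1}\to 0$, hence $\int_{\mathbb{Z}^d}\tilde F(u_n)\,dx\to 0$ by Lemma~\ref{Fvanish}, and therefore
\[
\varliminf_{n\to\infty}\Phi^{\infty}(u_n)=\varliminf_{n\to\infty}\Big(\frac{1}{2}\Vert\nabla u_n\Vert_2^2-\int_{\mathbb{Z}^d}\tilde F(u_n)\,dx\Big)\ge 0,
\]
which contradicts $\lim_n\Phi^{\infty}(u_n)=E_a^{\infty}<0$. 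Thus alternative (ii) holds: there are $\{y_n\}\subset\mathbb{Z}^d$ and $u$ with $u_n(\cdot-y_n)\to u$ in $l^2(\mathbb{Z}^d)$, so $\Vert u\Vert_2^2=a$ by strong convergence, and (exactly as in the last lines of the proof of Lemma~\ref{limitcomp}) $\Phi^{\infty}(u)=\lim_n\Phi^{\infty}(u_n)=E_a^{\infty}$; hence $u\in S_a$ is a minimizer.

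It remains to separate the two regimes. If $a>\alpha^{\infty}$, then by definition of the infimum there is $b$ with $\alpha^{\infty}\le b<a$ and $E_b^{\infty}<0$, so $E_a^{\infty}\le E_b^{\infty}<0$ by monotonicity (Lemma~\ref{funproper}(6) for $\Phi^{\infty}$), and the previous paragraph provides a minimizer. If instead $0<a<\alpha^{\infty}$, then $E_a^{\infty}\ge 0$ by the definition of $\alpha^{\infty}$ while $E_a^{\infty}\le 0$ by Lemma~\ref{funproper}(1), so $E_a^{\infty}=0$; were this value attained by some $u\in S_a$, Lemma~\ref{funproper}(3) (for $\Phi^{\infty}$) would give $E_{\theta a}^{\infty}<\theta E_a^{\infty}=0$ for every $\theta>1$, i.e.\ $E_{a'}^{\infty}<0$ for all $a'>a$, forcing $\alpha^{\infty}\le a$, a contradiction. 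Hence $E_a^{\infty}$ is not attained for $0<a<\alpha^{\infty}$.

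The only genuinely delicate step is the exclusion of vanishing in the second paragraph, and this is precisely where the strict negativity $E_a^{\infty}<0$ enters — when $E_a^{\infty}=0$ a minimizing sequence may in fact vanish, which is exactly why no minimizer exists in that range. Everything else is bookkeeping with Lemmas~\ref{funproper} and \ref{limitcomp}; the one point that must be checked along the way is that the limiting nonlinearity $\tilde f$ still satisfies $(f0)$–$(f4)$, so that the autonomous forms of those results are legitimate, and this is obtained by passing to the limit $|x|\to\infty$ in the uniform estimates of $(f1)$–$(f2)$.
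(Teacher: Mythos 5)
Your proof is correct and follows essentially the same route as the paper: exclude vanishing via Lemmas~\ref{vanish} and \ref{Fvanish} using $E_a^{\infty}<0$, invoke the dichotomy of Lemma~\ref{limitcomp} to obtain a minimizer, and then sort the two regimes using the monotonicity and attainment properties of $a\mapsto E_a^{\infty}$. The only (harmless) variation is in the non-attainment step, where you use the strict scaling inequality (Lemma~\ref{funproper}(3)) to force $\alpha^{\infty}\le a$ directly, while the paper uses strict subadditivity (Lemma~\ref{funproper}(5)) together with $E^{\infty}_{\alpha^{\infty}}=0$.
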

\begin{proof}
    Taking a minimizing sequence $\{u_n\}$ for $E_a^{\infty}$, we exclude the case $\lim\limits_{n\rightarrow\infty}\Vert u_n\Vert_{{\infty}}=0$. Otherwise, suppose that $\lim\limits_{n\rightarrow\infty}\Vert u_n\Vert_{{\infty}}=0$, by Lemma \ref{vanish} we have $u_n\rightarrow0$ in $l^{q+1}(\mathbb{Z}^d)$, therefore by Lemma \ref{Fvanish} we have $\lim\limits_{n\rightarrow\infty}\int_{\mathbb{Z}^d}\tilde{F}(u_n)dx=0.$ Thus 
    \[
    E_a^{\infty}=\lim\limits_{n\rightarrow\infty}\Phi^{\infty}(u_n)\geq-\lim\limits_{n\rightarrow\infty}\int_{\mathbb{Z}^d}\tilde{F}(u_n)dx=0.
    \]
    This contradicts $E_a^{\infty}\textless 0$. Therefore, by Lemma \ref{limitcomp}, there exists a global minimizer $u$. \\
    
    For the rest of the proof, if $a\textgreater\alpha^{\infty},$ then $E_a^{\infty}\textless0$ and thus $E_a^{\infty}$ is attained. If $0\textless a\textless\alpha^{\infty}$, $E_a^{\infty}$ can not be attained. If not, by (5) of Lemma \ref{funproper} we have $E^{\infty}_{\alpha^{\infty}}\textless E_a^{\infty}\leq 0$, but this contradicts $E_{\alpha^{\infty}}^{\infty}=0$. This proves the result.
\end{proof}

Afterward, we give a criterion for $\alpha^{\infty}=0$, which is closely related to the discrete Gagliardo-Nirenberg-Sobolev inequality.
We first recall a partial result of discrete Gagliardo-Nirenberg-Sobolev inequality, which was proved in \citep{MR4608774}.
\begin{theorem}\label{partialGNS}
     Suppose $d\geq 1$, if $2\textless p\leq p_d$ with
    
    \begin{equation*}
p_d:=\left\{
\begin{aligned}
    &\infty,d=1,2\\
    &\frac{2d}{d-2},d\geq 3
\end{aligned}
\right.
\end{equation*}
    then for every $0\textless\theta\textless\min(1,d(\frac{1}{2}-\frac{1}{p}))$, there exists constant $C_{\theta,p}$ such that the following inequality holds for any function $u\in l^2(\mathbb{Z}^d)$
\begin{equation}
\Vert u\Vert_p\leq C_{\theta,p}\Vert\nabla u\Vert_2^{\theta}\Vert u\Vert_2^{1-\theta} .  
\end{equation}
For $p\textgreater p_d$, (8) holds with $\theta =1$.
\end{theorem}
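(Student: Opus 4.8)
The plan is to deduce~(8) from two ingredients together with pure interpolation. The first ingredient is an integrability endpoint: for $d\ge 3$, the discrete Sobolev inequality $\Vert u\Vert_{2^*}\le C_d\Vert\nabla u\Vert_2$ with $2^*=\tfrac{2d}{d-2}$; for $d=1,2$, the family of $\ell^\infty$ bounds $\Vert u\Vert_\infty\le C_{\theta_0}\Vert\nabla u\Vert_2^{\theta_0}\Vert u\Vert_2^{1-\theta_0}$, valid for every $0<\theta_0<\min(1,d/2)$. The second ingredient is already at hand, namely the norm monotonicity $\Vert u\Vert_q\le\Vert u\Vert_p$ for $q>p$ of Lemma~\ref{normcontrol}. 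Granting these, Theorem~\ref{partialGNS} follows from H\"older's inequality as described below.

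For the $\ell^\infty$ bounds I would pass to the Fourier transform on the torus $\mathbb{T}^d=[-\pi,\pi]^d$, where $\Vert u\Vert_\infty\le C\Vert\hat u\Vert_{L^1(\mathbb{T}^d)}$, $\Vert u\Vert_2^2\asymp\Vert\hat u\Vert_{L^2(\mathbb{T}^d)}^2$, and $\Vert\nabla u\Vert_2^2\asymp\int_{\mathbb{T}^d}\sigma(\xi)\,|\hat u(\xi)|^2\,d\xi$, with symbol $\sigma(\xi)=\sum_{i=1}^d(2-2\cos\xi_i)$ satisfying $\sigma(\xi)\asymp|\xi|^2$ and $\sigma\le 4d$ on $\mathbb{T}^d$. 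Splitting $\int_{\mathbb{T}^d}|\hat u|$ at a radius $R\in(0,\pi)$ and using Cauchy--Schwarz on each piece gives
\[
\Vert u\Vert_\infty\ \le\ C\,R^{d/2}\,\Vert u\Vert_2\ +\ C\Big(\int_{\mathbb{T}^d\setminus B_R}|\xi|^{-2}\,d\xi\Big)^{1/2}\Vert\nabla u\Vert_2 .
\]
The high-frequency integral is bounded uniformly in $R$ when $d\ge 3$ (so that already $\Vert u\Vert_\infty\le C\Vert\nabla u\Vert_2$), is $\asymp\log(1/R)$ when $d=2$, and is $\asymp R^{-1}$ when $d=1$. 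Since $\sigma\le 4d$ forces $\Vert\nabla u\Vert_2\lesssim\Vert u\Vert_2$, the optimal radius stays bounded: for $d=1$, the choice $R\asymp\Vert\nabla u\Vert_2/\Vert u\Vert_2$ yields $\theta_0=\tfrac{1}{2}$; for $d=2$, choosing $R\asymp(\Vert\nabla u\Vert_2/\Vert u\Vert_2)^{\theta_0}$ and exploiting that any positive power beats the logarithm yields (after treating separately the harmless regime $\Vert\nabla u\Vert_2\asymp\Vert u\Vert_2$) every $\theta_0\in(0,1)$. The Sobolev inequality for $d\ge 3$ is classical: it follows, for instance, from the discrete isoperimetric inequality $|\partial A|\gtrsim|A|^{(d-1)/d}$, the co-area formula, and a H\"older bootstrap applied to $|u|^{2(d-1)/d}$, or from on-diagonal heat-kernel bounds on $\mathbb{Z}^d$.

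It remains to assemble~(8). If $d=1,2$ (so $p_d=\infty$), H\"older gives $\Vert u\Vert_p^p\le\Vert u\Vert_\infty^{p-2}\Vert u\Vert_2^2$; inserting the $\ell^\infty$ bound with exponent $\theta_0$ produces $\Vert u\Vert_p\le C\Vert\nabla u\Vert_2^{\tilde\theta}\Vert u\Vert_2^{1-\tilde\theta}$ with $\tilde\theta=\theta_0\,\tfrac{p-2}{p}$, and as $\theta_0$ ranges over $(0,\min(1,d/2))$ the exponent $\tilde\theta$ ranges over exactly $(0,\,d(\tfrac{1}{2}-\tfrac{1}{p}))$. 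If $d\ge 3$ and $2<p\le p_d=2^*$, interpolating $\ell^p$ between $\ell^2$ and $\ell^{2^*}$ and applying the Sobolev inequality gives $\Vert u\Vert_p\le C\Vert u\Vert_2^{1-s}\Vert\nabla u\Vert_2^{s}$ with $s=d(\tfrac{1}{2}-\tfrac{1}{p})\in(0,1]$; combining this with $\Vert u\Vert_p\le\Vert u\Vert_2$ through $\Vert u\Vert_p=\Vert u\Vert_p^{\,1-\theta/s}\Vert u\Vert_p^{\,\theta/s}$ lowers the gradient exponent to any $\theta\in(0,s)$. Finally, if $d\ge 3$ and $p>p_d$, Lemma~\ref{normcontrol} gives $\Vert u\Vert_p\le\Vert u\Vert_{2^*}\le C_d\Vert\nabla u\Vert_2$, which is~(8) with $\theta=1$. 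The step I expect to be the main obstacle is the $d=2$ case of the $\ell^\infty$ estimate: the frequency splitting produces only a logarithmic gain at the critical scale, so turning it into a genuine power of $\Vert\nabla u\Vert_2/\Vert u\Vert_2$ necessarily keeps $\theta_0$ strictly below $1$ --- which is exactly why the statement is a ``partial'' Gagliardo--Nirenberg--Sobolev inequality, missing the borderline exponent $\theta=d(\tfrac{1}{2}-\tfrac{1}{p})$.
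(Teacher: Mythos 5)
Your proposal is correct, and it is worth noting that the paper itself does not prove Theorem~\ref{partialGNS}: the statement is imported from \cite{MR4608774}, and the paper only supplies its own proofs for the \emph{endpoint} exponent $\theta=\min(1,d(\tfrac12-\tfrac1p))$ in Theorem~\ref{ourGNS}. Compared with that material, your treatment of $d\ge 3$ (Sobolev embedding into $l^{2d/(d-2)}$, interpolation with $l^2$, then lowering $\theta$ via $\Vert u\Vert_p\le\Vert u\Vert_2$) is essentially the same route the paper takes in Theorem~\ref{ourGNS}, and your handling of $p>p_d$ via Lemma~\ref{normcontrol} is the standard one. Where you genuinely diverge is in low dimensions: the paper works entirely in physical space, using $\Vert v\Vert_\infty\le C\Vert\nabla v\Vert_1$ (for $d=1$) and the $l^1\to l^2$ Sobolev inequality (for $d=2$) applied to $v=u^2$, which yields the endpoint for $d=1$ and only the single exponent $p=4$ for $d=2$; your Fourier-side frequency splitting on $\mathbb{T}^d$ instead produces the full family $\Vert u\Vert_\infty\le C_{\theta_0}\Vert\nabla u\Vert_2^{\theta_0}\Vert u\Vert_2^{1-\theta_0}$ for all $\theta_0<\min(1,d/2)$ (with $\theta_0=\tfrac12$ attained when $d=1$), after which H\"older gives exactly the stated open range of $\theta$ for every $2<p\le\infty$. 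I checked the two delicate points: the optimizing radius stays inside the torus because $\Vert\nabla u\Vert_2\lesssim\Vert u\Vert_2$, and for $d=2$ the factor $(\log(1/t))^{1/2}t^{1-\theta_0}$ with $t=\Vert\nabla u\Vert_2/\Vert u\Vert_2$ is indeed bounded on $(0,1]$ for each fixed $\theta_0<1$, so the logarithm is absorbed as you claim; your argument also correctly explains why the borderline $\theta=1$ is unreachable when $d=2$. The only part left at the level of a sketch is the discrete Sobolev inequality for $d\ge3$, but since the paper itself quotes this as Theorem~\ref{sobolev} from the literature, that is not a gap.
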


However Theorem \ref{partialGNS} do not address the case $2\textless p\leq p_d$ and $\theta = \min(1,d(\frac{1}{2}-\frac{1}{p}))$. We give a supplementary proof in the case $d\geq 3$ and $d=1$. First, recall the discrete Sobolev inequality \citep{HUA20156162}; see also \citep{MR4328634}. 
\begin{theorem}\label{sobolev}
    Suppose $d\geq 2$, then for any $u\in D^{1,p}(\mathbb{Z}^d)$, we have
    \[
    \Vert u\Vert_{p^{*}}\leq C_p\Vert u\Vert_{D^{1,p}}
    \]
    where $1\leq p\textless d$, $p^{*}=\frac{dp}{d-p}$, $\Vert u\Vert_{D^{1,p}}=(\int_{\mathbb{Z}^d}\vert\nabla u\vert^pdx)^{\frac{1}{p}}$ and $D^{1,p}(\mathbb{Z}^d)$ is the completion of $C_0(\mathbb{Z}^d)$ under the $D^{1,p}$ norm.
\end{theorem}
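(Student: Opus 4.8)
This is the classical discrete Sobolev inequality, recalled from \citep{HUA20156162}; the plan is to reconstruct the standard Gagliardo--Nirenberg argument on the lattice. First I would prove the inequality for finitely supported $u\in C_0(\mathbb{Z}^d)$, treating the endpoint case $p=1$ directly and then reaching general $1<p<d$ by the power substitution $u\mapsto\vert u\vert^{\gamma}$; afterwards I would extend to $D^{1,p}(\mathbb{Z}^d)$ by density, which is automatic since $D^{1,p}$ is the completion of $C_0(\mathbb{Z}^d)$ under $\Vert\cdot\Vert_{D^{1,p}}$.

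\textbf{Step 1 (the case $p=1$).} Put $\partial_i u(x)=u(x+e_i)-u(x)$. As $u$ has finite support, telescoping along the $i$-th coordinate gives, for every $x=(x_1,\dots,x_d)$,
\[
\vert u(x)\vert\le\sum_{t\in\mathbb{Z}}\big\vert\partial_i u(x_1,\dots,x_{i-1},t,x_{i+1},\dots,x_d)\big\vert=:g_i(\hat x_i),
\]
where $\hat x_i\in\mathbb{Z}^{d-1}$ is $x$ with the $i$-th coordinate deleted, so $g_i$ is independent of $x_i$. Hence $\vert u(x)\vert^{d/(d-1)}\le\prod_{i=1}^d g_i(\hat x_i)^{1/(d-1)}$; summing over $x\in\mathbb{Z}^d$ and applying the discrete Loomis--Whitney inequality (i.e.\ generalized Hölder with $d-1$ factors, the $i$-th depending only on $\hat x_i$), then AM--GM, yields
\[
\Vert u\Vert_{d/(d-1)}\le\prod_{i=1}^d\Big(\sum_{\hat x_i}g_i(\hat x_i)\Big)^{1/d}=\prod_{i=1}^d\Vert\partial_i u\Vert_1^{1/d}\le\frac1d\sum_{i=1}^d\Vert\partial_i u\Vert_1.
\]
Since $(\partial_i u(x))^2\le\sum_{y\sim x}(u(y)-u(x))^2=2\vert\nabla u\vert(x)^2$, we have $\Vert\partial_i u\Vert_1\le\sqrt2\,\Vert\,\vert\nabla u\vert\,\Vert_1$, so $\Vert u\Vert_{d/(d-1)}\le C_1\Vert u\Vert_{D^{1,1}}$ for all $d\ge2$ and $u\in C_0(\mathbb{Z}^d)$. (For $d=2$ the Loomis--Whitney step is just Fubini.)

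\textbf{Step 2 (bootstrap).} Set $\gamma=\tfrac{p(d-1)}{d-p}>1$ and apply Step 1 to $v=\vert u\vert^{\gamma}\in C_0(\mathbb{Z}^d)$. Using $\big\vert\,\vert a\vert^{\gamma}-\vert b\vert^{\gamma}\,\big\vert\le\gamma\big(\vert a\vert^{\gamma-1}+\vert b\vert^{\gamma-1}\big)\vert a-b\vert$ (mean value theorem for $t\mapsto\vert t\vert^{\gamma}$) one bounds $\Vert v\Vert_{D^{1,1}}\le C_d\,\gamma\sum_x\vert u(x)\vert^{\gamma-1}\vert\nabla u\vert(x)$ after a harmless shift of the summation index, and Hölder with exponents $\tfrac{p}{p-1},p$ gives $\sum_x\vert u\vert^{\gamma-1}\vert\nabla u\vert\le\Vert u\Vert_{(\gamma-1)p/(p-1)}^{\gamma-1}\Vert\nabla u\Vert_p$. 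The identities $\gamma\cdot\tfrac{d}{d-1}=p^{*}$ and $(\gamma-1)\cdot\tfrac{p}{p-1}=p^{*}$ (both equivalent to $\gamma-1=\tfrac{d(p-1)}{d-p}$) reduce the chain to $\Vert u\Vert_{p^{*}}^{\gamma}\le C\gamma\,\Vert u\Vert_{p^{*}}^{\gamma-1}\Vert\nabla u\Vert_p$; since $\Vert u\Vert_{p^{*}}<\infty$ for $u\in C_0(\mathbb{Z}^d)$, dividing gives $\Vert u\Vert_{p^{*}}\le C_p\Vert u\Vert_{D^{1,p}}$.

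\textbf{Step 3 (completion) and main obstacle.} If $\{u_n\}\subset C_0(\mathbb{Z}^d)$ is Cauchy in $\Vert\cdot\Vert_{D^{1,p}}$, applying the established inequality to $u_n-u_m$ shows $\{u_n\}$ is Cauchy in $\ell^{p^{*}}$; its $\ell^{p^{*}}$-limit is the element of $D^{1,p}(\mathbb{Z}^d)$ it represents, and the inequality passes to the limit. The routine parts are the telescoping estimate and the exponent bookkeeping. The steps that require genuine care are the discrete Loomis--Whitney inequality for $d\ge3$ (or an induction on $d$ replacing it) and, in Step 2, controlling $\Vert v\Vert_{D^{1,1}}$ by $\Vert\,\vert u\vert^{\gamma-1}\vert\nabla u\vert\,\Vert_1$: this forces one to handle the maximum of two neighbouring values of $\vert u\vert$ and to reindex the sum, all while keeping the constant finite and independent of $u$.
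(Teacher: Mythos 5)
The paper does not actually prove this statement: Theorem~\ref{sobolev} is recalled from \citep{HUA20156162} (see also \citep{MR4328634}) and used as a black box, so there is no in-paper argument to compare yours against. Your reconstruction is the standard Gagliardo--Nirenberg--Sobolev scheme transplanted to the lattice, which is essentially how the cited sources proceed, and it is correct. The exponent bookkeeping checks out: $\gamma=\frac{p(d-1)}{d-p}>1$ precisely when $p>1$, and $\gamma\cdot\frac{d}{d-1}=(\gamma-1)\cdot\frac{p}{p-1}=p^{*}$, so the chain $\Vert u\Vert_{p^{*}}^{\gamma}\leq C\gamma\Vert u\Vert_{p^{*}}^{\gamma-1}\Vert\nabla u\Vert_{p}$ closes (and the division is legitimate because $\Vert u\Vert_{p^{*}}<\infty$ for finitely supported $u$). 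You also correctly account for the paper's normalization $\vert\nabla u\vert^{2}(x)=\frac{1}{2}\sum_{y\sim x}(u(y)-u(x))^{2}$ via $\vert\partial_{i}u\vert\leq\sqrt{2}\,\vert\nabla u\vert$. The two points you flag as delicate are indeed the only ones that need to be written out in full: the inductive generalized H\"older (Loomis--Whitney) step for $d\geq3$, and in Step~2 the symmetrization $\sum_{x}\sum_{y\sim x}\vert u(y)\vert^{\gamma-1}\vert u(y)-u(x)\vert=\sum_{x}\sum_{y\sim x}\vert u(x)\vert^{\gamma-1}\vert u(y)-u(x)\vert$ together with the Cauchy--Schwarz bound $\sum_{y\sim x}\vert u(y)-u(x)\vert\leq 2\sqrt{d}\,\vert\nabla u\vert(x)$; both are routine and keep the constant depending only on $d$ and $p$. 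The density step is fine since the inequality makes the identity map $C_{0}(\mathbb{Z}^{d})\to \ell^{p^{*}}(\mathbb{Z}^{d})$ uniformly continuous for the $D^{1,p}$ norm.
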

Using Theorem 4,6, we have the following theorem.
\begin{theorem}\label{ourGNS}

      Suppose $d\geq 3$ or $d=1$, if $2\textless p\leq p_d$ with 
    
    \begin{equation*}
p_d:=\left\{
\begin{aligned}
    &\infty,d=1\\
    &\frac{2d}{d-2},d\geq 3
\end{aligned}
\right.
\end{equation*}
    then for every $\theta=\min(1,d(\frac{1}{2}-\frac{1}{p}))$, there exists constant $C_{d,p}$ such that the following inequality holds for any function $u\in l^2(\mathbb{Z}^d)$
\begin{equation*}
\Vert u\Vert_p\leq C_{d,p}\Vert\nabla u\Vert_2^{\theta}\Vert u\Vert_2^{1-\theta}.   
\end{equation*}
Suppose $d=2$, then there exists constant $C$ such that the following inequality holds for any function $u\in l^2(\mathbb{Z}^2)$
\[
\Vert u\Vert_4^4\leq C\Vert\nabla u\Vert_2^{2}\Vert u\Vert_2^{2}.    
\]
\end{theorem}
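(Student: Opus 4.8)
The plan is to treat the three regimes separately, using the discrete Sobolev inequality (Theorem \ref{sobolev}) with $p=1$ and $p=2$ together with interpolation via Lemma \ref{normcontrol} and H\"older's inequality. For $d\ge 3$ and the critical exponent $p=p_d=\frac{2d}{d-2}$ we have $\theta=1$, and this is exactly Theorem \ref{sobolev} with $p=2$ (observing that $\|u\|_{D^{1,2}}=\|\nabla u\|_2$), so nothing new is needed there. For $d\ge 3$ and $2<p<p_d$ the exponent is $\theta=d(\tfrac12-\tfrac1p)<1$; I would write $\frac1p=\frac{1-t}{2}+\frac{t}{2^*}$ for a suitable $t\in(0,1)$, apply H\"older to split $\|u\|_p\le\|u\|_2^{1-t}\|u\|_{2^*}^{t}$, and then bound $\|u\|_{2^*}$ by $\|\nabla u\|_2$ via Theorem \ref{sobolev}. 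A short computation identifies $t\theta'=\theta$ where $\theta=d(\tfrac12-\tfrac1p)$, which is the content already covered by Theorem \ref{partialGNS} for $\theta$ strictly below the threshold — the only genuinely new endpoint in dimension $d\ge3$ is $p=p_d$ itself, handled above; so the statement for $d\ge3$ reduces to combining Theorems \ref{partialGNS} and \ref{sobolev}.

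For $d=1$ the claimed endpoint is $p\in(2,\infty)$ with $\theta=\min(1,\tfrac12-\tfrac1p)=\tfrac12-\tfrac1p$ (note $\tfrac12-\tfrac1p<1$ always for finite $p$, and the case $p=\infty$ gives $\theta=\tfrac12$). Here I would use the elementary one-dimensional inequality
\[
\|u\|_\infty^2\le \|\nabla u\|_1\,\|u\|_\infty \le \sqrt{2}\,\|\nabla u\|_2\,\|u\|_2^{1/2}\|u\|_\infty^{1/2},
\]
more precisely: for $u\in C_0(\mathbb{Z})$, $u(x)^2=\sum_{y\le x}\big(u(y)^2-u(y-1)^2\big)\le \sum_y |u(y)-u(y-1)|\,(|u(y)|+|u(y-1)|)$, and then Cauchy–Schwarz gives $\|u\|_\infty^2\le 2\|\nabla u\|_2\,\|u\|_2$, i.e. the endpoint case $p=\infty$, $\theta=\tfrac12$. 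For $2<p<\infty$ interpolate: $\|u\|_p^p=\sum|u|^p\le\|u\|_\infty^{p-2}\|u\|_2^2\le (2\|\nabla u\|_2\|u\|_2)^{(p-2)/2}\|u\|_2^2$, which rearranges to $\|u\|_p\le C\|\nabla u\|_2^{(p-2)/(2p)}\|u\|_2^{1-(p-2)/(2p)}$ with $(p-2)/(2p)=\tfrac12-\tfrac1p=\theta$, as desired, and one passes from $C_0(\mathbb{Z})$ to all of $l^2(\mathbb{Z})$ by density/truncation.

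For $d=2$ the target is $\|u\|_4^4\le C\|\nabla u\|_2^2\|u\|_2^2$, i.e. the endpoint $p=4$, $\theta=\min(1,2(\tfrac12-\tfrac14))=\tfrac12$. The standard two-dimensional trick carries over to the lattice: write $u(x_1,x_2)^2$ as a sum of one-variable differences in each coordinate separately,
\[
u(x_1,x_2)^2\le \sum_{y_1\in\mathbb{Z}}\big|\partial_1(u^2)(y_1,x_2)\big|,\qquad
u(x_1,x_2)^2\le \sum_{y_2\in\mathbb{Z}}\big|\partial_2(u^2)(x_1,y_2)\big|,
\]
multiply the two bounds, sum over $(x_1,x_2)\in\mathbb{Z}^2$, factor the double sum, and use $|\partial_i(u^2)|\le |\partial_i u|\,(|u(\cdot)|+|u(\cdot+e_i)|)$ with Cauchy–Schwarz to get $\sum_x u(x)^4\le \big(\sum|\partial_1 u|^2\big)^{1/2}\cdots$; collecting terms yields $\|u\|_4^4\le C\,\|\nabla u\|_2^2\,\|u\|_2^2$. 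I expect the main obstacle to be purely bookkeeping: on $\mathbb{Z}^d$ the discrete product rule is only an inequality, $|u(y)^2-u(x)^2|\le|u(y)-u(x)|\,(|u(y)|+|u(x)|)$, so one must carefully track the extra cross terms and shifted copies of $u$ when splitting $|\nabla u|^2=\sum_i|\partial_i u|^2$ and when summing by parts, ensuring all shifted $l^2$ norms are controlled by $\|u\|_2$ (which they are, by translation invariance of the counting measure). Once the $d=1$ and $d=2$ slicing estimates are set up cleanly, the constants can be absorbed and density handles the extension from finitely supported functions to $l^2(\mathbb{Z}^d)$.
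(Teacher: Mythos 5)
Your proposal is correct, and for $d\ge 3$ and $d=1$ it follows exactly the paper's route: H\"older interpolation between $l^2$ and $l^{2^*}$ combined with the discrete Sobolev inequality when $d\ge 3$, and the telescoping estimate $\Vert u\Vert_\infty^2\le C\Vert\nabla u\Vert_2\Vert u\Vert_2$ (obtained by applying $\Vert v\Vert_\infty\le C\Vert\nabla v\Vert_1$ to $v=u^2$) followed by $\Vert u\Vert_p^p\le\Vert u\Vert_\infty^{p-2}\Vert u\Vert_2^2$ when $d=1$. The only genuine divergence is the case $d=2$: the paper applies Theorem \ref{sobolev} with $p=1$ (i.e.\ $\Vert v\Vert_2\le C\Vert\nabla v\Vert_1$ on $\mathbb{Z}^2$) to $v=u^2$ and then uses the discrete product-rule inequality plus Cauchy--Schwarz, whereas you re-derive that embedding by hand via the Ladyzhenskaya/Loomis--Whitney slicing of $u^2$ along the two coordinate directions. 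Both arguments are valid and rest on the same mechanism (controlling $\Vert u^2\Vert_2$ by $\Vert\nabla(u^2)\Vert_1$); yours is more self-contained but longer, the paper's is shorter given that Theorem \ref{sobolev} is already on hand. One small inaccuracy in your commentary for $d\ge 3$: the endpoint $\theta=d(\frac12-\frac1p)$ is new relative to Theorem \ref{partialGNS} for \emph{every} $p\in(2,p_d]$, not only for $p=p_d$, since that theorem only covers $\theta$ strictly below the threshold; this does not affect your proof, because the H\"older--Sobolev argument you give establishes the endpoint for all such $p$ at once.
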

\begin{proof}
    If $d\geq 3$ and $2\textless p\leq p_d$, by Theorem \ref{sobolev} and H\"{o}lder's inequality, we have
    \[
        \Vert u\Vert_p\leq C\Vert u\Vert_{p_d}^{d(\frac{1}{2}-\frac{1}{p})}\Vert u\Vert_2^{1-d(\frac{1}{2}-\frac{1}{p})}\leq C\Vert \nabla u\Vert_{2}^{d(\frac{1}{2}-\frac{1}{p})}\Vert u\Vert_2^{1-d(\frac{1}{2}-\frac{1}{p})}.
    \]

    If $d=1$ and $2\textless p\leq\infty$, then for any $v\in l^1(\mathbb{Z})$, suppose $\vert v(x)\vert=\Vert v\Vert_{\infty}$ for some $x$, then we have 
    \begin{align*}
        \vert v(x)\vert&\leq\sum\limits_{i=0}^{k}\vert v(x+i)-v(x+i+1)\vert +\vert v(x+k+1)\vert\\
        &\leq C\Vert\nabla v\Vert_{1} +\vert v(x+k+1)\vert.
    \end{align*}
    Letting $k\rightarrow\infty$, we have that
    \begin{equation}
    \Vert v\Vert_{\infty}\leq C\Vert \nabla v\Vert_{1} .   
    \end{equation}
    
    For any $u\in l^2(\mathbb{Z})$, substituting $v = u ^2$ into (9), we have that
    \begin{align*}
        \Vert u\Vert^{2}_{\infty}&\leq C\sum\limits_{x\sim y}\vert u^{2}(x)-u^{2}(y)\vert\\
        &\leq C\sum\limits_{x\sim y}\vert u(x)-u(y)\vert(\vert u(x)\vert+\vert u(y)\vert)\\
        &\leq  C(\sum\limits_{x\sim y}\vert u(x)-u(y)\vert^{2})^{\frac{1}{2}}(\sum\limits_{x\sim y}\vert u(x)\vert^{2}+\vert u(y)\vert^{2})^{\frac{1}{2}}\\
        &\leq C\Vert \nabla u\Vert_{2}\Vert u\Vert_{2}.
    \end{align*}
    Hence we have
    \begin{align*}
        \Vert u\Vert_p^p&\leq \Vert u\Vert_2^2\Vert u\Vert_{\infty}^{p-2}\\
        &\leq C\Vert u\Vert_2^2(\Vert\nabla u\Vert_2\Vert u\Vert_{2})^{\frac{p-2}{2}}\\
        &=C\Vert \nabla u\Vert_2^{\frac{p-2}{2}}\Vert u\Vert_{2}^{\frac{p+2}{2}}.
    \end{align*}
    
    If $d=2$, by Theorem \ref{sobolev}, for all $v\in l^1(\mathbb{Z}^2)$, we have
    \[
    \Vert v\Vert_2\leq C\Vert \nabla v\Vert_1.
    \]
    Hence, for all $u\in l^2(\mathbb{Z}^2)$, taking $v=u^2$, we have
    \begin{align*}
        \Vert u\Vert^{2}_{4}&\leq C\sum\limits_{x\sim y}\vert u^{2}(x)-u^{2}(y)\vert\\
        &\leq C\sum\limits_{x\sim y}\vert u(x)-u(y)\vert(\vert u(x)\vert+\vert u(y)\vert)\\
        &\leq  C(\sum\limits_{x\sim y}\vert u(x)-u(y)\vert^{2})^{\frac{1}{2}}(\sum\limits_{x\sim y}\vert u(x)\vert^{2}+\vert u(y)\vert^{2})^{\frac{1}{2}}\\
        &\leq C\Vert\nabla u\Vert_{2}\Vert u\Vert_{2}.
    \end{align*}
\end{proof}
In conclusion, we have the following corollary.
\begin{corollary}\label{masscritiGNS}
    There exists constant $C_{d,2+\frac{4}{d}}$ such that for any function $u\in l^2(\mathbb{Z}^d)$, $d\geq1$, the following inequality holds
    \[
    \Vert u\Vert_{2+\frac{4}{d}}^{2+\frac{4}{d}}\leq C_{d,2+\frac{4}{d}}\Vert \nabla u\Vert^2_2\Vert u\Vert_2^{\frac{4}{d}}.
    \]
\end{corollary}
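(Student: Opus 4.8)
The plan is to read the corollary off directly from Theorem~\ref{ourGNS} by specializing to the exponent $p=2+\frac{4}{d}$ and then raising the resulting interpolation inequality to the $p$-th power. There is essentially no new analytic content here; the only points requiring care are that the admissibility hypotheses of Theorem~\ref{ourGNS} are met and that the exponents produced by the specialization are the ones claimed.

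First I would check that $p=2+\frac{4}{d}$ lies in the allowed range in each dimension. Trivially $p>2$. For $d\geq 3$ one has $p\leq p_d=\frac{2d}{d-2}$, since clearing denominators gives $(2+\frac{4}{d})(d-2)=2d-\frac{8}{d}\leq 2d$; for $d=1$ one has $p=6<\infty=p_d$. In both cases the exponent occurring in Theorem~\ref{ourGNS} is
\[
\theta=\min\Bigl(1,\,d\bigl(\tfrac12-\tfrac1p\bigr)\Bigr)=d\bigl(\tfrac12-\tfrac1p\bigr)=\frac{d}{d+2}<1,
\]
so Theorem~\ref{ourGNS} supplies a constant $C_{d,p}$ with $\Vert u\Vert_p\leq C_{d,p}\Vert\nabla u\Vert_2^{\theta}\Vert u\Vert_2^{1-\theta}$ for all $u\in l^2(\mathbb{Z}^d)$.

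Next I would raise this inequality to the power $p=2+\frac{4}{d}=\frac{2d+4}{d}$ and simplify: $\theta p=\frac{d}{d+2}\cdot\frac{2d+4}{d}=2$ and $(1-\theta)p=\frac{2}{d+2}\cdot\frac{2d+4}{d}=\frac{4}{d}$. This yields exactly
\[
\Vert u\Vert_{2+\frac{4}{d}}^{2+\frac{4}{d}}\leq C_{d,2+\frac{4}{d}}\Vert\nabla u\Vert_2^2\Vert u\Vert_2^{\frac{4}{d}}
\]
with $C_{d,2+\frac{4}{d}}:=C_{d,p}^{\,p}$, which establishes the corollary for $d\geq 3$ and $d=1$. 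For $d=2$ the asserted bound $\Vert u\Vert_4^4\leq C\Vert\nabla u\Vert_2^2\Vert u\Vert_2^2$ is precisely the final statement of Theorem~\ref{ourGNS} (here $2+\frac{4}{d}=4$), so nothing further is needed. The only "obstacle" is the bookkeeping of exponents, which the computation above settles.
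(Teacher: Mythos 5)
Your proposal is correct and is exactly the derivation the paper intends: Corollary~\ref{masscritiGNS} is read off from Theorem~\ref{ourGNS} by taking $p=2+\frac{4}{d}$, checking $p\leq p_d$ and $\theta=\frac{d}{d+2}$, and raising the inequality to the $p$-th power (with the $d=2$ case being the explicit final statement of that theorem). The exponent bookkeeping $\theta p=2$ and $(1-\theta)p=\frac{4}{d}$ is verified correctly, so nothing is missing.
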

Using Corollary \ref{masscritiGNS}, we can establish Theorem \ref{thm2} in the special case that $V(x)\equiv 0$ and $f(x,s)=\tilde{f}(s)$.
\begin{theorem}\label{limitcritirion}
    (i)If $\varliminf\limits_{s\rightarrow0}\frac{\tilde{F}(s)}{\vert s\vert^{2+\frac{4}{d}}}=\infty$ , then $\alpha^{\infty}=0$.\\ 
    (ii)If $\varlimsup\limits_{s\rightarrow0}\frac{\tilde{F}(s)}{\vert s\vert^{2+\frac{4}{d}}}\textless\infty$, then $\alpha^{\infty}\textgreater0$.
\end{theorem}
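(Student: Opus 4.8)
The plan is to read off both claims from the characterization $\alpha^{\infty}=\inf\{a>0:E_a^{\infty}<0\}$, together with the fact --- already contained in the proof of Lemma~\ref{funproper}(1) applied with $V\equiv 0$ --- that $E_a^{\infty}\le 0$ for every $a>0$. Thus (ii) reduces to showing $E_a^{\infty}\ge 0$ for all sufficiently small $a$, while (i) reduces to showing $E_a^{\infty}<0$ for \emph{every} $a>0$.

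For (ii) I would use the mass-critical Gagliardo--Nirenberg--Sobolev inequality. By the $\varlimsup$ hypothesis there exist $C_F<\infty$ and $\delta>0$ with $\tilde F(s)\le C_F|s|^{2+\frac4d}$ for $|s|\le\delta$. If $u\in S_a$ with $a\le\delta^2$, then Lemma~\ref{normcontrol} gives $\|u\|_\infty\le\|u\|_2=\sqrt a\le\delta$, so $\tilde F(u(x))\le C_F|u(x)|^{2+\frac4d}$ at every vertex; summing and applying Corollary~\ref{masscritiGNS},
\[
\Phi^{\infty}(u)\ \ge\ \frac12\|\nabla u\|_2^2-C_F C_{d,2+\frac4d}\,\|\nabla u\|_2^2\,\|u\|_2^{\frac4d}=\|\nabla u\|_2^2\Bigl(\frac12-C_F C_{d,2+\frac4d}\,a^{\frac2d}\Bigr)\ \ge\ 0
\]
as soon as $a\le(2C_F C_{d,2+\frac4d})^{-d/2}$. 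Hence $E_a^{\infty}\ge 0$ for $a\le\min\bigl(\delta^2,(2C_F C_{d,2+\frac4d})^{-d/2}\bigr)$, and therefore $\alpha^{\infty}\ge\min\bigl(\delta^2,(2C_F C_{d,2+\frac4d})^{-d/2}\bigr)>0$.

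For (i), fix $a>0$ and test $\Phi^{\infty}$ on the tent functions $u_n$ from the proof of Lemma~\ref{funproper}(1), normalized so that $\|u_n\|_2^2=a$; recall $\|\nabla u_n\|_2^2\le C n^{-2}$ and $\|u_n\|_\infty=u_n(0)\sim\sqrt a\,n^{-d/2}\to 0$. For any prescribed $M>0$, the hypothesis $\varliminf_{s\to 0}\tilde F(s)/|s|^{2+\frac4d}=\infty$ furnishes $\delta_M>0$ with $\tilde F(s)\ge M|s|^{2+\frac4d}\ge 0$ for $0<|s|\le\delta_M$. Choosing $n$ large enough that $\|u_n\|_\infty\le\delta_M$, and using that $u_n\ge c_1\sqrt a\,n^{-d/2}$ on $\{|x|\le n/2\}$, a set of at least $c_2 n^d$ vertices, I obtain
\[
\int_{\mathbb{Z}^d}\tilde F(u_n)\,dx\ \ge\ M\sum_{|x|\le n/2}|u_n(x)|^{2+\frac4d}\ \ge\ M c_0\,a^{1+\frac2d}\,n^{-2},
\]
with $c_0>0$ depending only on $d$. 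Combined with the gradient bound this gives
\[
\Phi^{\infty}(u_n)\ \le\ \frac{C}{2}\,n^{-2}-M c_0\,a^{1+\frac2d}\,n^{-2}=n^{-2}\Bigl(\frac{C}{2}-M c_0\,a^{1+\frac2d}\Bigr),
\]
which is negative once $M>C/(2c_0 a^{1+\frac2d})$. Fixing such an $M$ and then $n$ large enough that $\|u_n\|_\infty\le\delta_M$ and the two displayed estimates hold yields $E_a^{\infty}\le\Phi^{\infty}(u_n)<0$; since $a>0$ was arbitrary, $\alpha^{\infty}=0$.

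The step I expect to be the crux is the matching lower bound $\sum_{|x|\le n/2}|u_n(x)|^{2+\frac4d}\ge c_0 a^{1+\frac2d}n^{-2}$: one must check that the tent's amplitude $\sim\sqrt a\,n^{-d/2}$ together with the mass-critical exponent $2+\frac4d$ makes the nonlinear term decay at \emph{exactly} the same rate $n^{-2}$ as the Dirichlet energy $\|\nabla u_n\|_2^2$, so that the large prefactor $M$ coming from mass-subcriticality near $0$ can push $\Phi^{\infty}(u_n)$ below zero. A subsidiary but essential point is the order of quantifiers --- fix $a$, then choose $M$, then obtain $\delta_M$, and only then send $n\to\infty$ --- because $\delta_M$ shrinks as $M$ grows.
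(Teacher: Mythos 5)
Your proposal is correct and takes essentially the same route as the paper: part (ii) is the identical Gagliardo--Nirenberg argument via Corollary~\ref{masscritiGNS}, and part (i) tests $\Phi^{\infty}$ on the same normalized tent functions with the same quantifier ordering (fix $a$, choose the large constant, then $\delta_M$, then $n$). Your explicit lower bound $\sum_{|x|\le n/2}|u_n|^{2+\frac4d}\gtrsim a^{1+\frac2d}n^{-2}$ in fact makes precise a step the paper leaves implicit, where it reuses the upper-bound constant $C^2_{a,d}$ as a lower bound for $\Vert u_n\Vert_{2+\frac4d}^{2+\frac4d}$.
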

\begin{proof}

(i) For arbitrary $a\textgreater0$, let
    \[
    u_{n}(x)=\left\{
    \begin{aligned}
       & c_{a,d,n}\frac{n-\vert x\vert}{n^{\frac{d}{2}+1}}, \vert x\vert\leq n\\
       & 0, \mathrm{otherwise}
    \end{aligned}
    \right.
    \]
    where $c_{a,d,n}$ is taken such that $\Vert u_n\Vert_2^2=a.$ Then we have $\Vert\nabla u_n\Vert_2^2\leq C_{a,d}^1n^{-2}$ and $\Vert u_n\Vert_p^p\leq C^2_{a,d}n^{-\frac{d(p-2)}{2}}$ for all $2\textless p\textless \infty$, therefore $\lim\limits_{n\rightarrow\infty}\Vert u_n\Vert_{\infty}=0$. Since $\varliminf_{s\rightarrow0}\frac{\tilde{F}(s)}{\vert s\vert^{2+\frac{4}{d}}}=\infty$, for any large $\delta\textgreater0$, there exists $\eta\textgreater0$ such that
    \[
    \tilde{F}(s)\geq \delta \vert s\vert^{2+\frac{4}{d}}
    \]
    holds for all $s\in[-\eta,\eta]$. We take sufficiently large $\delta$  such that $\frac{1}{2}C^1_{a,d} -C^2_{a,d}\delta\textless0 $. Choosing large $n$ such that $\Vert u_n\Vert_{\infty}\leq\eta$, we have that
     \[
    \begin{aligned}
        \Phi^{\infty}(u_n)&\leq \frac{1}{2}\int_{\mathbb{Z}^d}\vert\nabla u_n\vert^2dx-\delta\int_{\mathbb{Z}^d}\vert u_n\vert^{2+\frac{4}{d}}dx\\
        &\leq \frac{1}{2}C^1_{a,d} n^{-2}-C^2_{a,d}\delta n^{-2}.\\
    \end{aligned}
    \]
     Then $\Phi^{\infty}(u_n)\textless 0$, and we have $E_a^{\infty}\textless 0$.\\

(ii)If $\varlimsup\limits_{s\rightarrow0}\frac{\tilde{F}(s)}{\vert s\vert^{2+\frac{4}{d}}}\textless\infty$, then there exist constants $C_{F}$ and $\eta\textgreater0$ such that 
\[
\tilde{F}(s)\leq C_{F} \vert s\vert^{2+\frac{4}{d}}
\]
holds for all $s\in[-\eta,\eta].$ Hence for $a\leq \eta^2$ and any $u\in S_a$, we have $\Vert u\Vert_{\infty}\leq\Vert u\Vert_2\leq\eta$. By Corollary \ref{masscritiGNS}, we have
\[
\begin{aligned}
    \Phi^{\infty}(u)&\geq \frac{1}{2}\int_{\mathbb{Z}^d}\vert\nabla u\vert^2dx-C_{F}\int_{\mathbb{Z}^d}\vert u\vert^{2+\frac{4}{d}}dx\\
    &\geq \frac{1}{2}\int_{\mathbb{Z}^d}\vert\nabla u\vert^2dx-C_{F}C_{d,2+\frac{4}{d}}a^{\frac{2}{d}}\Vert\nabla u\Vert_2^{2}.\\
\end{aligned}
\]
Hence, for $a\leq\min(\eta^2,(\frac{1}{2C_FC_{d,2+\frac{4}{d}}})^{\frac{d}{2}})$, we have that $\Phi^{\infty}(u)\geq 0 $ holds for all $u\in S_a$. Therefore, $E_a^{\infty}=0$ and $\alpha^{\infty}\textgreater 0$.
\end{proof}
By the above results, we have prove the limit case. We now consider the general case. We assume there exists $x\in\mathbb{Z}^d$ such that $V(x)\textless V_{\infty}=0$ or $f(x,s)\textgreater\tilde{f}(s)$, otherwise it is reduced to the limit case. Then we have the following lemmas, which will be useful to prove the main result.
\begin{lemma}\label{comparelimit}
    (i) $\forall a\textgreater0$, we have $E_a\leq E_a^{\infty}$.\\
    (ii) If $E_a^{\infty}$ is attained, then we have $E_a\textless E_a^{\infty}$.
\end{lemma}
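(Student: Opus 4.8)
The plan is to compare the functional $\Phi$ with the limit functional $\Phi^\infty$ by testing with translated copies of near-minimizers, using the asymptotic conditions $(V_0)$ and $(f2)$.

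\medskip

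\textbf{Part (i).} Fix $a>0$ and let $\epsilon>0$. Pick $w\in S_a$ with $\Phi^\infty(w)\le E_a^\infty+\epsilon$. First I would approximate $w$ by a finitely supported function: by density of $C_0(\mathbb{Z}^d)$ in $l^2(\mathbb{Z}^d)$ and the local Lipschitz-type bound on $F$ (the estimate $|F(x,u)|\le \epsilon|u|^2+C_\epsilon|u|^{q+1}$ together with $f(x,\cdot)$ continuous), one can choose $w$ with finite support, renormalized so $\|w\|_2^2=a$, such that $\Phi^\infty(w)\le E_a^\infty+2\epsilon$. Now set $w_n(x):=w(x-y_n)$ for a sequence $y_n$ with $|y_n|\to\infty$. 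Translation preserves $\|w_n\|_2^2=a$ and $\|\nabla w_n\|_2^2=\|\nabla w\|_2^2$. Since $w$ has finite support, for $n$ large the support of $w_n$ lies in the region where $V(x)\le \epsilon$ (by $(V_0)$ with $V_\infty=0$) and where $|f(x,s)-\tilde f(s)|$ is small uniformly for $|s|\le\|w\|_\infty$ (by $(f2)$); integrating the latter gives $|F(x,w_n(x))-\tilde F(w_n(x))|$ small on the (bounded-size) support. Hence
\[
\Phi(w_n)=\tfrac12\|\nabla w\|_2^2+\tfrac12\int_{\mathbb{Z}^d}V(x)w_n^2\,dx-\int_{\mathbb{Z}^d}F(x,w_n)\,dx\le \Phi^\infty(w)+C\epsilon\le E_a^\infty+(C+2)\epsilon,
\]
so $E_a\le\Phi(w_n)\le E_a^\infty+(C+2)\epsilon$, and letting $\epsilon\to0$ yields $E_a\le E_a^\infty$.

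\medskip

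\textbf{Part (ii).} Suppose $E_a^\infty$ is attained, say $\Phi^\infty(u)=E_a^\infty$ with $u\in S_a$. By Theorem~\ref{limitmain} (applied after noting that if $E_a^\infty$ is attained then $E_a^\infty<0$, since otherwise $E_a^\infty=0$ would force, via part (5) of Lemma~\ref{funproper}, the strict inequality $E^\infty_{2a}<0=E^\infty_a$ which is impossible for a nonincreasing $a\mapsto E^\infty_a$ that equals $0$ at $a$)... more directly: the key point is that $u$ is a genuine minimizer. We now use the strict monotonicity hypothesis in $(f2)$: either there is $x_1$ with $f(x_1,s)>\tilde f(s)$ for all $s\neq 0$, giving $F(x_1,s)>\tilde F(s)$ for $s\neq0$, or $V(x)<0$ at some point. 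I would first translate $u$ so that a point where $u$ is nonzero sits at $x_1$ (or at the point where $V<0$); since $\tilde F$ and $\Phi^\infty$ are translation-invariant this does not change $\Phi^\infty(u)=E_a^\infty$, but it may fail if $u$ vanishes somewhere — however, a minimizer of $E^\infty_a$ can be taken strictly positive (or one argues: since $u\not\equiv 0$, translate so $u(x_1)\ne 0$). Then
\[
\Phi(u)=\Phi^\infty(u)+\tfrac12\int_{\mathbb{Z}^d}V(x)u^2\,dx-\int_{\mathbb{Z}^d}\big(F(x,u)-\tilde F(u)\big)\,dx\le \Phi^\infty(u)=E_a^\infty,
\]
because $V\le 0$ and $F(x,s)\ge\tilde F(s)$ pointwise by $(f2)$; moreover the inequality is \emph{strict} since at $x=x_1$ the term $F(x_1,u(x_1))-\tilde F(u(x_1))>0$ (or at the point where $V<0$ the term $V u^2<0$). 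Hence $E_a\le\Phi(u)<E_a^\infty$.

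\medskip

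\textbf{Main obstacle.} The delicate point in (ii) is ensuring that the strict improvement genuinely occurs, i.e. that after translation the minimizer $u$ is nonzero precisely where $V$ is negative or where $f$ strictly dominates $\tilde f$. If only $V(x_0)<0$ at a single site, one must translate $u$ so that $u(x_0)\neq0$; this is possible since $u\not\equiv 0$ and the lattice is homogeneous, but one should verify that the minimizer's support can be arranged to cover $x_0$ — for instance by first showing any minimizer of $E^\infty_a$ is nowhere zero (a maximum-principle / unique-continuation type argument on the lattice), or simply by picking any lattice point $z$ with $u(z)\neq0$ and translating by $x_0-z$. In (i) the only subtlety is the finite-support approximation, which is routine given the growth control on $F$ from $(f0)$–$(f2)$ and Lemma~\ref{normcontrol}.
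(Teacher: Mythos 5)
Your proposal is correct, and part (ii) is essentially the paper's own argument: translate the minimizer $u$ of $\Phi^\infty$ so that some point where $u\neq 0$ lands on the distinguished site $x_1$ (which exists by the standing assumption that either $V(x_1)<0$ or $f(x_1,s)>\tilde f(s)$), use translation invariance of $\Phi^\infty$ to keep $\Phi^\infty(\tilde u)=E_a^\infty$, and extract a strictly negative correction from $\tfrac12\int V\tilde u^2-\int(F(x,\tilde u)-\tilde F(\tilde u))$. The ``obstacle'' you flag is a non-issue and is resolved exactly as you say in your last sentence (pick any $z$ with $u(z)\neq0$ and translate by $x_1-z$); no positivity or unique-continuation argument is needed, and the detour about $E_a^\infty<0$ is superfluous. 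Where you genuinely diverge is part (i): the paper observes that under $(V_0)$ (with $V_\infty=0$) and $(f2)$ one has $V(x)\le 0$ and $F(x,s)\ge\tilde F(s)$ pointwise, hence $\Phi(u)\le\Phi^\infty(u)$ for every $u\in S_a$ and $E_a\le E_a^\infty$ in one line. Your translation-to-infinity argument with a finite-support approximation is also valid and is in fact more robust --- it uses only the asymptotic conditions $\lim_{|x|\to\infty}V(x)=0$ and $\lim_{|x|\to\infty}f(x,s)=\tilde f(s)$, not the sign conditions $V\le 0$ and $f\ge\tilde f$, so it would survive a sign-indefinite potential --- but in the present setting it is considerably more work than necessary.
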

\begin{proof}

    (i) Since $V(x)\leq 0$ and $f(x,s)\geq\tilde{f}(s)$, it is obvious that we have $E_a\leq E_a^{\infty}$.\\
    
    (ii) Suppose $u$ is a global minimizer of $\Phi^{\infty}$ with respect to $E_a^{\infty}$, let $x_1\in\mathbb{Z}^d$ such that  $V(x_1)\textless V_{\infty}=0$ or $f(x_1,s)\textgreater\tilde{f}(s)$. Take $x_0$ such that $u(x_0)\neq 0.$ Setting $\tilde{u}(x)=u(x+x_0-x_1)$,  we have
    \[
    \begin{aligned}
        E_a&\leq \frac{1}{2}\int_{\mathbb{Z}^d}\vert\nabla\tilde{u}\vert^2+V(x)\tilde{u}^2dx-\int_{\mathbb{Z}^d}F(x,\tilde{u})dx\\
        &=\frac{1}{2}\int_{\mathbb{Z}^d}\vert\nabla\tilde{u}\vert^2dx-\int_{\mathbb{Z}^d}\tilde{F}(\tilde{u})dx+\frac{1}{2}\int_{\mathbb{Z}^d}V(x)\tilde{u}^2dx-\int_{\mathbb{Z}^d}(F(x,\tilde{u})-\tilde{F}(\tilde{u}))dx\\
        &=E^{\infty}_a+\frac{1}{2}\int_{\mathbb{Z}^d}V(x)\tilde{u}^2dx-\int_{\mathbb{Z}^d}(F(x,\tilde{u})-\tilde{F}(\tilde{u}))dx.
    \end{aligned}
    \]
    Since $\frac{1}{2}\int_{\mathbb{Z}^d}V(x)\tilde{u}^2dx\leq 0$ and $-\int_{\mathbb{Z}^d}(F(x,\tilde{u})-\tilde{F}(\tilde{u}))dx\leq0$ can not vanish simultaneously, we have
    \[
    E_a\textless E_a^{\infty}.
    \]
\end{proof}
\begin{lemma}\label{compareattained}
    If $E_a\textless E_a^{\infty}$, then $E_a$ is attained.
\end{lemma}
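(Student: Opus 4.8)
The plan is to take a minimizing sequence $\{u_n\}\subset S_a$ for $E_a$, extract a weak limit $u_n\rightharpoonup u$ in $l^2(\mathbb{Z}^d)$ (possible by Lemma \ref{boundness}), and show $u\in S_a$ with $\Phi(u)=E_a$. Set $v_n=u_n-u$ and $\beta=\Vert u\Vert_2^2$, so that $\Vert v_n\Vert_2^2=a-\beta+o(1)$ by Corollary \ref{NormBreLieb}. The splitting I would use is the Brezis--Lieb decomposition: by Corollary \ref{NormBreLieb}, Lemma \ref{FBreLieb}, and Lemma \ref{Vvanish} applied to $v_n$ (which converges pointwise to $0$), one gets
\[
\Phi(u_n)=\Phi(u)+\frac{1}{2}\int_{\mathbb{Z}^d}|\nabla v_n|^2\,dx-\int_{\mathbb{Z}^d}\tilde F(v_n)\,dx+o(1)=\Phi(u)+\Phi^{\infty}(v_n)+o(1),
\]
where I have used that $V(x)v_n^2\to 0$ in sum and that the non-autonomous part $\int(F(x,v_n)-\tilde F(v_n))\,dx\to 0$ because, by $(f2)$ and $(f1)$, $F(x,s)-\tilde F(s)$ is uniformly small for $|x|$ large and for bounded $s$, while on any fixed ball $v_n\to 0$ pointwise — this is essentially the argument already used in Lemma \ref{FBreLieb} and Lemma \ref{PhiBreLieb}.

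Next I would show $\beta=a$, i.e. $u\in S_a$. Suppose not, so $0\le\beta<a$. First rule out $\beta=0$: if $u\equiv 0$ then $\Phi(u_n)=\Phi^{\infty}(v_n)+o(1)$; but $\varliminf_n \Phi^{\infty}(v_n)\ge E_a^{\infty}$ by definition of $E_a^\infty$ together with the scaling estimate from part (6) of Lemma \ref{funproper} (rescaling $v_n$ to have exact mass $a$ changes $\Phi^\infty$ by $o(1)$), which forces $E_a\ge E_a^{\infty}$, contradicting the hypothesis $E_a<E_a^{\infty}$. Now suppose $0<\beta<a$. Then $u$ is admissible for $E_\beta$, so $\Phi(u)\ge E_\beta$; and rescaling $v_n$ to mass $a-\beta$ and arguing as before gives $\varliminf_n\Phi^{\infty}(v_n)\ge E^{\infty}_{a-\beta}$. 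Hence
\[
E_a=\lim_{n\to\infty}\Phi(u_n)\ge E_\beta+E^{\infty}_{a-\beta}.
\]
By Lemma \ref{comparelimit}(i), $E^{\infty}_{a-\beta}\ge E_{a-\beta}$, so $E_a\ge E_\beta+E_{a-\beta}\ge E_a$ by subadditivity (Lemma \ref{funproper}(4)); thus all inequalities are equalities. In particular $\Phi(u)=E_\beta$, so $E_\beta$ is attained by $u$. But then Lemma \ref{funproper}(5) gives $E_a=E_\beta+E_{a-\beta}>E_{a}$, a contradiction. Therefore $\beta=a$.

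Finally, with $\Vert u\Vert_2^2=a$, Corollary \ref{NormBreLieb} gives $\Vert v_n\Vert_2\to 0$, so $\Vert v_n\Vert_{q+1}\to 0$ and $\Vert\nabla v_n\Vert_2^2\le C\Vert v_n\Vert_2^2\to 0$; hence $\Phi^{\infty}(v_n)\to 0$ by Lemma \ref{Fvanish}. Passing to the limit in the Brezis--Lieb identity yields $\Phi(u)=\lim_n\Phi(u_n)=E_a$, so $u$ is a minimizer. The main obstacle is the bookkeeping in the case $0<\beta<a$: one must carefully combine the dichotomy-type energy splitting with subadditivity and the strict inequality (5) to reach the contradiction, and one must justify the $o(1)$ corrections under rescaling exactly as in Lemma \ref{funproper}(6); the rest is a routine application of the Brezis--Lieb-type lemmas already established.
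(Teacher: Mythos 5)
Your proposal is correct and follows essentially the same route as the paper's proof: a Brezis--Lieb splitting along a minimizing sequence, ruling out vanishing ($\beta=0$) via the hypothesis $E_a<E_a^{\infty}$, and ruling out dichotomy ($0<\beta<a$) via subadditivity combined with the strict inequality of Lemma \ref{funproper}(5). The only cosmetic difference is that you replace $\Phi(v_n)$ by $\Phi^{\infty}(v_n)$ in every case, whereas the paper performs this reduction only when $\beta=0$ and otherwise bounds $\Phi(v_n)\geq E_{a-\beta}+o(1)$ directly.
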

\begin{proof}

    Take a minimizing sequence $\{u_n\}\subset S_a$ with respect to $E_a$, then $\{u_n\}$ is bounded in $l^2(\mathbb{Z}^d)$ by Lemma \ref{boundness}, so that taking a subsequence if necessary we assume $u_n\rightharpoonup u$ in $l^2(\mathbb{Z}^d)$. Lemma \ref{PhiBreLieb} implies that
    \[
    \Phi(u)=\Phi(u_n)+\Phi(u_n-u)+o(1).
    \]
    Assume $\Vert u\Vert_{2}^2=\eta$, then $\eta\leq a$. We prove that $\eta=a$.\\
    
Suppose that this is not true. If $\eta=0$, then for all $x\in\mathbb{Z}^d$, we have $\lim\limits_{n\rightarrow\infty}u_n(x)=0$. Since $\lim\limits_{\vert x\vert\rightarrow\infty}V(x)=0$, 
    by Lemma \ref{Vvanish} we have $\lim\limits_{n\rightarrow\infty}\int_{\mathbb{Z}^d}V(x)u_n^2dx=0$. 
    For any $\epsilon\textgreater0$, taking $\delta\textgreater0$ and $R\textgreater0$ which will be determined later, we have following estimate
    \[
    \begin{aligned}
        \vert\int_{\mathbb{Z}^d}(F(x,u_n)-\tilde{F}(u_n))dx\vert\leq&\int_{B_R}\vert F(x,u_n)-\tilde{F}(u_n)\vert dx\\&
        +\int_{x\in B_R^c,\vert u_n\vert\leq\delta}\vert F(x,u_n)-\tilde{F}(u_n)\vert dx\\
        &+\int_{x\in B_R^c,\delta\leq\vert u_n\vert\leq\frac{1}{\delta}}\vert F(x,u_n)-\tilde{F}(u_n)\vert dx\\
        &+\int_{x\in B_R^c,\frac{1}{\delta}\leq\vert u_n\vert}\vert F(x,u_n)-\tilde{F}(u_n)\vert dx\\
        &\triangleq I_1+I_2+I_3+I_4.
    \end{aligned}
    \]
    Now we estimate $I_1,I_2,I_3,I_4$ seperately. \\
    
    For $I_1$, since for any $x\in\mathbb{Z}^d$, $\lim\limits_{n\rightarrow\infty}u_n(x)=0$, we have 
    \begin{align*}
    &\int_{B_R}\vert F(x,u_n)-\tilde{F}(u_n)\vert dx\\
    &\leq \int_{B_R}\vert F(x,u_n)\vert dx+\int_{B_R}\vert\tilde{F}(u_n)\vert dx\\
    &\leq \epsilon    
    \end{align*}
    for sufficiently large $n.$ \\
    
    For $I_2$, from condition $(f1)$ we have 
    \begin{align*}
    &\int_{x\in B_R^c,\vert u_n\vert\leq\delta}\vert F(x,u_n)-\tilde{F}(u_n)\vert dx\\
    &\leq \sup_{x\in\mathbb{Z}^d,0\textless s\leq\delta}\frac{\vert F(x,s)-\tilde{F}(s)\vert}{s^2} \int_{x\in B_R^c}\vert u(x)\vert^2 dx\\
    &\leq a\epsilon    
    \end{align*}
for sufficiently small $\delta$.\\
    
    For $I_3$, from condition $(f2)$ we have
    \[
    \begin{aligned}
        &\int_{x\in B_R^c,\delta\leq\vert u_n\vert\leq\frac{1}{\delta}}\vert F(x,u_n)-\tilde{F}(u_n)\vert dx\\
        &\leq\int_{x\in B_R^c,\delta\leq\vert u_n\vert\leq\frac{1}{\delta}}\vert F(x,u_n)-\tilde{F}(u_n)\vert\frac{\vert u(x)\vert^2}{\delta^2} dx\\
        &\leq\frac{a\epsilon}{\delta^2}
    \end{aligned}
    \]
    for sufficiently large $R$.\\
    
    For $I_4$, from condition $(f1)$ we have
    \begin{align*}
            &\int_{x\in B_R^c,\frac{1}{\delta}\leq\vert u_n\vert}\vert F(x,u_n)-\tilde{F}(u_n)\vert dx\\
            &
    \leq \sup_{x\in\mathbb{Z}^d,s\geq \frac{1}{\delta}}\frac{\vert F(x,s)-\tilde{F}(s)\vert}{s^{q+1}}\int_{x\in B_R^c}\vert u_n\vert^{q+1} dx\\
    &\leq C\epsilon
    \end{align*}
    for sufficiently small $\delta$.
    In conclusion, we have 
    \[
    \lim_{n\rightarrow\infty}\int_{\mathbb{Z}^d}(F(x,u_n)-\tilde{F}(u_n))dx=0.
    \]
    Hence, we have
    \[
    \begin{aligned}
        E_a&=\Phi(u_n)+o(1)\\
        &=\frac{1}{2}\int_{\mathbb{Z}^d}\vert\nabla u_n\vert^2+V(x)u_n^2dx-\int_{\mathbb{Z}^d}F(x,u_n)dx+o(1)\\
        &=\frac{1}{2}\int_{\mathbb{Z}^d}\vert\nabla u_n\vert^2dx-\int_{\mathbb{Z}^d}\tilde{F}(u_n)dx+o(1)\\
        &\geq E_a^{\infty} + o(1).
    \end{aligned}
    \]\
    But this is a contradiction to $E_a\textless E_a^{\infty}$.\\
    
    If $0\textless \eta\textless a$, then by Corollary \ref{NormBreLieb} we have $\Vert v_n\Vert_{2}^2=a-\eta+o(1)$. If $E_{\eta}$ can not be attained, then we have
    \[
    \begin{aligned}
        E_{a}&=\Phi(u)+\Phi(v_n)+o(1)\\
        &\geq \Phi(u)+E_{a-\eta}+o(1)
    \end{aligned}
    \]
   Letting $n\rightarrow\infty,$ we have
    \[
    \begin{aligned}
        E_{a}&\geq \Phi(u)+E_{a-\eta}\\
        &\textgreater E_{\eta}+E_{a-\eta}\\
        &\geq E_a.
    \end{aligned}
    \]
    This is a contradiction. Hence $u$ is a global minimizer of $\Phi$ with respect to $E_{\eta}$, but then 
    \[
    E_a\geq E_{\eta}+E_{a-\eta}.
    \]
    This is a contradiction to $E_a\textless E_{\eta}+E_{a-\eta}$ since $E_{\eta}$ is attained.\\
    
    In conclusion, $\Vert u\Vert_2^2=a$. By Corollary \ref{NormBreLieb} we have $\lim\limits_{n\rightarrow\infty}\Vert u_n-u\Vert_2=0$, hence by Lemma \ref{Fvanish}, $\lim\limits_{n\rightarrow\infty}\int_{\mathbb{Z}^d}F(x,u_n-u)dx=0.$ Note that $$\lim\limits_{n\rightarrow\infty}\int_{\mathbb{Z}^d}\vert\nabla(u_n-u)\vert^2dx\leq C\lim\limits_{n\rightarrow\infty}\int_{\mathbb{Z}^d}\vert u_n-u\vert^2dx=0.$$ Using $\lim\limits_{n\rightarrow\infty}\Vert u_n-u\Vert_{\infty}\leq \lim\limits_{n\rightarrow\infty}\Vert u_n-u\Vert_2=0,$ by Lemma \ref{Vvanish} we have $\lim\limits_{n\rightarrow\infty}\int_{\mathbb{Z^d}}V(x)\vert u_n-u\vert^2dx=0$. In conclusion $\lim\limits_{n\rightarrow\infty}\Phi(u_n-u)=0$, which implies $\Phi(u)=\lim\limits_{n\rightarrow\infty}\Phi(u_n)=E_a$ by Lemma \ref{PhiBreLieb}, therefore $u$ is a global minimizer of $\Phi$ with respect to $E_a$.
\end{proof}
At the end, we are ready to prove Theorem \ref{thm1}
\begin{proof}[{Proof of  Theorem \ref{thm1}}]
    Take $\alpha=\inf\{a\enspace|E_a\textless0\}.$ If $a\textgreater\alpha$ and $E_a^{\infty}=0$, then $E_a\textless 0=E_a^{\infty}.$ By Lemma \ref{compareattained} $E_a$ is attained. If $a\textgreater\alpha$ and $E_a^{\infty}\textless0$, then by Theorem \ref{limitmain}, $E_a^{\infty}$ can be attained. Hence $E_a\textless E_a^{\infty}$ by (ii) in Lemma \ref{comparelimit}, and $E_a$ can be attained by Lemma \ref{compareattained}. If $0\textless a\textless\alpha$, the proof is similar to the proof of Theorem \ref{limitmain}.
\end{proof}

Before the proof of Theorem \ref{thm2}, we recall the Hardy inequality on the lattice $\mathbb{Z}^d$, $d\geq 3$; see \citep{MR2544038, MR3774437}.

\begin{theorem}\label{hardy}
    If $d\geq 3$, there exists constant $C_d$ such that
    \[
    C_d\int_{\mathbb{Z}^d}\frac{u^2}{1+\vert x\vert^2}dx\leq \int_{\mathbb{Z}^d}\vert\nabla u\vert^2dx,u\in C_c(\mathbb{Z}^d).
    \]
\end{theorem}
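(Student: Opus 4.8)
The plan is to derive the inequality from the existence of a positive \emph{supersolution} of $-\Delta$ with potential $C_d(1+|x|^2)^{-1}$, via the discrete ground-state representation. First I would record the following completion-of-squares identity: for any $\phi\colon\mathbb{Z}^d\to(0,\infty)$ and any $u\in C_c(\mathbb{Z}^d)$, writing $u=\phi v$ (so $v\in C_c(\mathbb{Z}^d)$) and using the Leibniz-type splitting $\Delta u(x)=\sum_{y\sim x}\phi(y)(v(y)-v(x))+v(x)\Delta\phi(x)$ together with $\int_{\mathbb{Z}^d}|\nabla u|^2\,dx=-\int_{\mathbb{Z}^d}u\,\Delta u\,dx$ and symmetrization over edges,
\begin{equation*}
\int_{\mathbb{Z}^d}|\nabla u|^2\,dx=\int_{\mathbb{Z}^d}\frac{-\Delta\phi}{\phi}\,u^2\,dx+\frac12\sum_{x\in\mathbb{Z}^d}\sum_{y\sim x}\phi(x)\phi(y)\,(v(x)-v(y))^2 .
\end{equation*}
The second term is nonnegative, so it suffices to produce $\phi>0$ with $-\Delta\phi(x)\ge C_d(1+|x|^2)^{-1}\phi(x)$ for all $x$; then $\int|\nabla u|^2\,dx\ge\int\frac{-\Delta\phi}{\phi}u^2\,dx\ge C_d\int\frac{u^2}{1+|x|^2}\,dx$, which is the claim. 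Since the $\ell^1$ and $\ell^2$ norms on $\mathbb{Z}^d$ are comparable, the precise choice of norm in the weight only affects the constant.

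For the supersolution I would use the lattice analogue of the continuous ground state $|x|^{-(d-2)/2}$ of $-\Delta$ on $\mathbb{R}^d$. Taking $\phi(x)=(1+|x|^2)^{-(d-2)/4}$ and Taylor-expanding the discrete Laplacian against the continuous one gives $-\Delta\phi(x)=-\Delta_{\mathrm{cont}}\phi(x)+O(|x|^{-4}\phi(x))$ as $|x|\to\infty$, while a direct computation yields $-\Delta_{\mathrm{cont}}\phi(x)=(\tfrac{(d-2)^2}{4}+o(1))(1+|x|^2)^{-1}\phi(x)>0$; hence the pointwise bound holds for all large $|x|$, and for the remaining finitely many $x$ one checks $-\Delta\phi(x)>0$ by hand (the weight being bounded there). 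A variant that avoids the origin check is $\phi=\sqrt{G}$, where $G$ is the Green's function of $-\Delta$ on $\mathbb{Z}^d$, $d\ge3$: here $-\Delta G=\delta_0$, $G>0$ everywhere, $G(x)\asymp(1+|x|)^{2-d}$ with the corresponding discrete gradient estimate. Concavity of $t\mapsto\sqrt{t}$ gives, for each $x$,
\begin{equation*}
-\Delta\sqrt{G}(x)=\frac{-\Delta G(x)}{2\sqrt{G(x)}}+\frac18\sum_{y\sim x}\theta_{x,y}^{-3/2}(G(x)-G(y))^2,
\end{equation*}
with $\theta_{x,y}$ lying between $G(x)$ and $G(y)$; the first term vanishes for $x\neq0$ and is positive at $x=0$, and for $|x|$ large, using $\theta_{x,y}^{-3/2}\gtrsim(1+|x|)^{3(d-2)/2}$ and $\sum_{y\sim x}(G(x)-G(y))^2\gtrsim(1+|x|)^{2(1-d)}$ (read off the expansion $G(x)=a_d|x|^{2-d}+O(|x|^{-d})$), the sum is $\gtrsim(1+|x|)^{-(d+2)/2}$; since $\sqrt{G(x)}\asymp(1+|x|)^{-(d-2)/2}$, dividing gives $\dfrac{-\Delta\sqrt{G}(x)}{\sqrt{G}(x)}\gtrsim(1+|x|^2)^{-1}$ for $|x|$ large, and on the remaining finite set the ratio is strictly positive (a finite verification: $G$ is not locally constant at any vertex). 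Feeding this $\phi$ into the identity above finishes the proof.

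The step I expect to be the real work is the \emph{quantitative} pointwise inequality $-\Delta\phi\ge C_d(1+|x|^2)^{-1}\phi$: with the explicit weight $\phi=(1+|x|^2)^{-(d-2)/4}$ one must control the discrete-versus-continuous Laplacian error uniformly in $x$ and then dispose of the finitely many small-$|x|$ cases directly; with $\phi=\sqrt{G}$ one instead needs clean two-sided control of the lattice Green's function and of its discrete gradient, which rests on its classical but nonelementary asymptotics. An essentially equivalent route, mimicking the Euclidean proof of Hardy's inequality, is to dualize the supersolution and build a flux field $b$ on the edges with $\operatorname{div}b(x)\gtrsim(1+|x|^2)^{-1}$ and $|b(x)|^2\lesssim(1+|x|^2)^{-1}$ (the lattice analogue of $b(x)=x/|x|^2$), then sum by parts and apply Cauchy--Schwarz; the same near-origin bookkeeping reappears there.
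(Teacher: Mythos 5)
The paper does not actually prove this statement: Theorem~\ref{hardy} is quoted from the literature with the citations \citep{MR2544038,MR3774437} standing in for a proof, so there is no in-paper argument to compare yours against. Your strategy --- the ground-state representation
\[
\int_{\mathbb{Z}^d}|\nabla u|^2\,dx=\int_{\mathbb{Z}^d}\frac{-\Delta\phi}{\phi}\,u^2\,dx+\frac12\sum_{x\in\mathbb{Z}^d}\sum_{y\sim x}\phi(x)\phi(y)\,(v(x)-v(y))^2,\qquad u=\phi v,
\]
followed by the construction of a positive supersolution of $-\Delta-C_d(1+|x|^2)^{-1}$ --- is exactly the mechanism of the cited reference \citep{MR3774437}, and the identity itself is correct: it follows from the Leibniz splitting you wrote down plus symmetrization over edges. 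The $\phi=\sqrt{G}$ variant is the robust route, and, granting the classical expansion $G(x)=a_d|x|^{2-d}+O(|x|^{-d})$ together with the observation that every $x\neq0$ has a neighbour $y$ with $|\hat{x}\cdot(x-y)|\ge d^{-1/2}$, your exponent bookkeeping for large $|x|$ checks out: the Lagrange-remainder term is $\gtrsim(1+|x|)^{-(d+2)/2}$, and dividing by $\sqrt{G}\asymp(1+|x|)^{-(d-2)/2}$ produces the weight $(1+|x|^2)^{-1}$ up to constants.

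Two steps are asserted rather than proved and are where the actual work lies. First, for the explicit weight $\phi=(1+|x|^2)^{-(d-2)/4}$, the claim that $-\Delta\phi(x)>0$ at the remaining small-$|x|$ vertices is not a routine check: since $t\mapsto(1+t)^{-(d-2)/4}$ is convex and $\Delta(|x|^2)=2d$, the elementary convexity estimate gives a \emph{lower} bound on $\Delta\phi$, i.e.\ an inequality in the wrong direction, and the set of vertices to be checked grows with $d$, so this is not literally a finite verification; this difficulty is precisely why the literature prefers $\sqrt{G}$. Second, in the $\sqrt{G}$ route you still need strict positivity of $-\Delta\sqrt{G}(x)$ at every vertex of the bounded region, which reduces to showing that $G$ is not locally constant at any $x\neq0$; this is true but requires an argument (for instance, strict monotonicity of $G$ along a coordinate axis via the random-walk representation $G(x)=G(0)\,\mathbb{P}_x(\tau_0<\infty)$), and you should also note that once strict positivity is known on the finite set, the quantitative bound there is free after shrinking $C_d$. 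Neither gap is fatal, but as written your argument is a correct outline of the standard proof rather than a complete one.
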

Using Theorem \ref{hardy}, we can prove Theorem \ref{thm2}

\begin{proof}[{Proof of  Theorem \ref{thm2}}]
    Without loss of generality, we can assume $V_{\infty}=0$.\\

    (i) Considering the function $u_R$ in the proof of Lemma \ref{funproper}, we have 
    \[
    \Phi(u_R)\leq d\xi^2(2R+1)^{d-1}-\tilde{F}(\xi)(2R+1)^d.
    \]
    Take $R=\frac{1}{2}[\frac{d\xi^2}{\tilde{F}(\xi)}]$, then $\Phi(u_R)\textless0$, and $\Vert u_R\Vert_2^2=\xi^2([\frac{d\xi^2}{\tilde{F}(\xi)}]+1)^{d}$. Hence, $\alpha\textless\xi^2([\frac{d\xi^2}{\tilde{F}(\xi)}]+1)^{d}$.\\
    
    (ii) This is a direct consequence of Theorem \ref{limitcritirion} and Lemma \ref{comparelimit}.\\
    
    (iii) By Theorem \ref{hardy}, we have 
    \[
    \frac{1}{2}\int_{\mathbb{Z}^d}V(x)u^2dx\geq - \frac{C_d}{2}(1-\epsilon)\int_{\mathbb{Z}^d}\frac{1}{1+\vert x\vert^2}u^2dx\geq -(\frac{1-\epsilon}{2})\int_{\mathbb{Z}^d}\vert\nabla u\vert^2dx.
    \]
    Then the proof for $\alpha\textgreater0$ is similar to the proof of Theorem \ref{limitcritirion}. For the lower bound, suppose $u\in l^2(\mathbb{Z}^d)$ such that $\Vert u\Vert_2^2\leq \delta^2$, then $\Vert u\Vert_{\infty}\leq \delta$. We have
    \begin{align*}
    \Phi(u)&\geq\frac{\epsilon}{2}\int_{\mathbb{Z}^d}\vert\nabla u\vert^2dx-\int_{\mathbb{Z}^d}F(x,u)dx  \\
    &\geq\frac{\epsilon}{2}\int_{\mathbb{Z}^d}\vert\nabla u\vert^2dx- C_F\int_{\mathbb{Z}^d}\vert u\vert^{2+\frac{4}{d}}dx\\
    &\geq\frac{\epsilon}{2}\int_{\mathbb{Z}^d}\vert\nabla u\vert^2dx- C_FC_{d,2+\frac{4}{d}}a^{\frac{2}{d}}\int_{\mathbb{Z}^d}\vert\nabla u\vert^2dx.
    \end{align*}
    Hence, for $a\leq\min((\frac{\epsilon}{2C_{F}C_{d,2+\frac{4}{d}}})^{\frac{d}{2}},\delta^2)$, $\Phi(u)\geq 0 $.\\
    This proves the theorem.
    
\end{proof}

\textbf{Acknowledgements.}
The author is deeply grateful to Professor Bobo Hua for helpful discussions and constant support.

%\printbibliography
%\bibliography{reference}

\end{document}